\documentclass{article}
\usepackage{amssymb, amsthm, amsmath, amsfonts}
\usepackage{geometry}
\usepackage[colorlinks=true,urlcolor=blue,linkcolor=red,citecolor=magenta]{hyperref}
\usepackage{cleveref}
\usepackage{hyperref}

\usepackage{xcolor}

\newtheorem{theorem}{Theorem}[section]
\newtheorem{lemma}[theorem]{Lemma}
\newtheorem{corollary}[theorem]{Corollary}
\newtheorem{proposition}[theorem]{Proposition}
\newtheorem{conjecture}[theorem]{Conjecture}

\theoremstyle{definition}
\newtheorem{definition}[theorem]{Definition}
\newtheorem{remark}[theorem]{Remark}

\newtheorem{openp}[theorem]{Open Problem}

\DeclareMathOperator{\wt}{wt} 
\DeclareMathOperator{\im}{Im}
\DeclareMathOperator{\mult}{mult}
\DeclareMathOperator{\supp}{supp}
\DeclareMathOperator{\Tr}{Tr}

\newcommand{\graph}[1]{\mathcal{G}_{#1}}

\newcommand{\Z}{\mathbb{Z}}

\newcommand{\F}{\mathbb{F}}
\newcommand{\set}[1]{\left\{ #1 \right\}}

\newcommand{\parens}[1]{ \left( #1 \right)}

\newcommand{\floor}[1]{ \left\lfloor #1 \right\rfloor }
\newcommand{\bentcomps}[1]{{\mathcal{B}({#1})}}
\newcommand{\semibentcomps}[1]{{\mathcal{SB}({#1})}}
\newcommand{\nearbentcomps}[1]{{\mathcal{NB}({#1})}}

\begin{document}

\title{Resolving a conjecture on quadratic APN functions and a new quadratic $(n,n)$-function associated to crooked functions}

\author{Claude Carlet$^1$, Darrion Thornburgh$^2$}
\date{$^1$Universities of Paris 8, France and Bergen, Norway. \\ \texttt{claude.carlet@gmail.com} \\ 
$^2$Department of Mathematics, Vanderbilt University, USA.
\texttt{darrion.thornburgh@vanderbilt.edu}\\[2ex]
}
\maketitle

\begin{abstract}
We say an $(n,n)$-function $F \colon \mathbb{F}_2^n \to \mathbb{F}_2^n$ is a crooked function if for any nonzero $a \in \mathbb{F}_2^n$, the image of $D_aF(x)=F(x)+F(x+a)$ is an affine hyperplane.
The only known examples of crooked functions are all quadratic almost perfect nonlinear (APN), or equivalently, for every known crooked function, $D_aF$ is affine for all $a \in \mathbb{F}_2^n$.
The ortho-derivative $\pi_F \colon\mathbb{F}_2^n \to \mathbb{F}_2^n$ of a crooked function $F$ is the function such that $\pi_F(0)=0$, and for any nonzero $a$, the set $\{0,\pi_F(a)\}^\perp$ is the underlying vector space of $\mathrm{Im}(D_aF)$.
We prove that for $n \geq 4$ and a crooked function $F$, if $k$ is a non-negative integer such that $F$ has $2^k-1$ quadratic component functions, $\pi_F$ has at least $2^n-2^{n-k}$ component functions of algebraic degree $n-2$.
In particular, we resolve Gorodilova's conjecture that every component function of $\pi_F$ has algebraic degree $n-2$ when $F$ is quadratic APN.
As a second main result, for $n \geq 4$, we associate to a crooked function $F$ a quadratic function $\varepsilon_F \colon \mathbb{F}_2^n \to \mathbb{F}_2^n$ that satisfies a strong geometric-combinatorial condition regarding the sums of $F$ over $2$-dimensional linear subspaces.
As a corollary to both of our main results, we prove that for any even $n \geq 4$, any quadratic APN $(n,n)$-function has at least $n$ semi-bent components.
Furthermore, we obtain a congruence result on a problem on $m$-sequences introduced by Johansen, Helleseth, and Kholosha, and we determine the exact algebraic degrees of some Boolean functions associated to the bent and near-bent components of particular classes of plateaued vectorial functions.
\end{abstract}

\section{Introduction}\label{sec:introduction}

We call a function $F\colon\F_2^n \to \F_2^m$ an \textit{$(n,m)$-function}, and for any $v \in\F_2^m$, we say the function $(v \cdot F)(x) = v \cdot F(x)$ is a \textit{component function} of $F$.
For a point $a \in \F_2^n$ and an  $(n,m)$-function $F$, the function $D_a F(x)=F(x)+F(x+a)$ is the \textit{derivative of $F$ in the direction of $a$}, and the image of $D_a F$ is a \textit{differential set} of $F$.
An $(n,n)$-function $F$ is \textit{crooked} if the image of $D_a F$ is an affine hyperplane (that is, a translate $u+H$, with $u \in \F_2^n$ of a linear hyperplane $H \subseteq \F_2^n$) for all nonzero $a \in \F_2^n$ \cite{Kyureghyan2007}.
Crooked functions are known to have many interesting properties, including some graph-theoretical connections to distance-regular graphs \cite{BendingFlaassCrooked1998}.
However, all known crooked functions are quadratic ($F$ being quadratic is equivalent to $D_a F$ being affine for all $a\in \F_2^n$).
The central problem surrounding crooked functions is to find an example of a non-quadratic crooked function or prove no such function exists.

Crooked functions lie inside of a larger class of functions that are called \textit{almost perfect nonlinear} (APN). 
An $(n,n)$-function $F$ is said to be APN if the image set of $D_a F$ has size $2^{n-1}$ for all nonzero $a \in \F_2^n$, or equivalently, $D_a F$ is a $2$-to-$1$ function for all nonzero $a \in \F_2^n$.
Crooked functions are distinguished from non-crooked APN functions by the existence of an ortho-derivative.
For a crooked function $F$, the \textit{ortho-derivative} of $F$ is the unique function $\pi_F \colon \F_2^n \to \F_2^n$ such that $\pi_F(0)=0$ and $\set{0,\pi_F(a)}^\perp$ is the underlying vector space of $\im(D_aF)$ for all nonzero $a \in \F_2^n$.
Studying the ortho-derivative of a crooked function (in particular, the ortho-derivative of a quadratic APN function) has been of great interest in recent research (see, e.g. \cite{MihailaThornburgh2026,CouvreurOrtho-Derivative,BeierleLeanderExtensions,Gorodilova2019,CarletPiccioneStrongDProperty}).
For instance, the Walsh spectrum of the ortho-derivative of a quadratic APN function is regarded as strongly discriminating with respect to Carlet-Charpin-Zinoviev (CCZ) equivalence (see \Cref{prelim} for a complete background of definitions), and we refer the reader to 
\cite{beierle2025millionsinequivalentquadraticapn,BeierleLeander2022,Yu2022} for more practical utility of the ortho-derivative.
There remain several open problems on the ortho-derivatives of crooked functions (for instance, it is unknown whether or not two extended affine (EA) inequivalent crooked functions can have linearly equivalent ortho-derivatives).
In this paper, we are concerned with the following conjecture on the algebraic degrees of the components of the ortho-derivative of a quadratic APN function.

\begin{conjecture}[\cite{Gor20}]\label{conj:Gorodilova}
Assume $n \geq 4$, and let $F \colon \F_2^n\to \F_2^n$ be a quadratic APN function.
For any nonzero $v \in \F_2^n$, the algebraic degree of the component function $(v \cdot \pi_F)(x) = v \cdot \pi_F(x)$ is equal to $n-2$.
\end{conjecture}

To provide a brief history on \Cref{conj:Gorodilova}, Gorodilova first proved in \cite{Charpin2019DiffUni} that $\deg_{alg}(\pi_F)=\max_{v \in \F_2^n \setminus \set{0}} \deg_{alg}(v \cdot \pi_F) \leq n-2$ when $n \geq 3$ is odd.
Then, in \cite{Gor20}, Gorodilova introduced \Cref{conj:Gorodilova} and computationally observed that it held for all known (at the time) quadratic APN functions for all $n \leq 11$.
A few years later, in \cite{CouvreurOrtho-Derivative}, Couvreur, Canteaut, and Perrin proved $\deg_{alg}(\pi_F) \leq n-2$ for all $n \geq 3$ when $F$ is a quadratic APN function.
The problem of whether or not it was possible for the inequality $\deg_{alg}(\pi_F) <n-2$ to hold also remained open.

The first main result of this paper is that we prove \Cref{conj:Gorodilova} in the affirmative by proving that if $n \geq 4$ and $F$ is a crooked $(n,n)$-function with at least $2^k-1$ quadratic component functions\footnote{The set of those $v$ such that $v \cdot F$ is quadratic being a vector space, its size is a power of $2$.}, then $\pi_F$ has at least $2^n-2^{n-k}$ components of algebraic degree $n-2$.
First, we consider, more generally, a function $F$ whose differential sets are affine spaces (such a function is said to have \textit{the crooked property} \cite{Charpin2022}), and we study the indicators of the underlying vector spaces of the differential sets of $F$.
In particular, we define Boolean functions that are associated to a function $F$ with the crooked property, and these functions naturally generalize a key property of the component functions of the ortho-derivative of a crooked function.
Furthermore, we describe the Walsh transforms of the Boolean functions that we associate to functions with the crooked property.
From this, we obtain a description of the Walsh transform of the ortho-derivative of a crooked function $F$ in terms of the 3-sums of the graphs of $F$ and its restrictions to affine hyperplanes.
In \cite[Theorem 1]{CouvreurOrtho-Derivative}, the upper bound $\deg_{alg}(\pi_F) \leq n-2$ was shown when $F$ is a quadratic APN function.
We then prove more generally, using different methods than \cite{CouvreurOrtho-Derivative}, that we have $\deg_{alg}(\pi_F) \leq n-2$ for any crooked function $F$.

On the other hand, when we prove the inequality $\deg_{alg}(v \cdot \pi_F) \geq n-2$ for $F$ quadratic APN, our arguments will be more involved.
A key ingredient in our proof is the collection of ``exclude parity functions'' associated to quadratic APN functions. 
For an APN function $F \colon \F_2^n \to \F_2^n$ and a nonzero $v \in \F_2^n$, the Boolean function $f_v \colon \F_2^n \to \F_2$ defined by 
\[
f_v(u) \equiv |\{\set{x,y} \subseteq \set{0,u}^\perp : \beta_F(x,y)=v\}| \pmod 2,
\]
is an \textit{exclude parity function of $F$} (we introduce this terminology as these functions characterize the parity of ``exclude multiplicities'' with respect to some particular Sidon sets, see \Cref{prelim}), where we let $\beta_F(x,y) = F(0)+F(x)+F(y)+F(x+y)$.
We will show that these Boolean functions are always quadratic (i.e. of algebraic degree at most $2$), and we prove that if $F$ is crooked, then for any nonzero $v \in \F_2^n$, the component function $v \cdot \pi_F$ has algebraic degree $n-2$ if and only if $f_v$ has algebraic degree equal to $2$.
We then confirm this latter condition under some particular hypotheses that are satisfied by crooked functions with at least one quadratic component function.
In fact, we prove all of the above, more generally, for $(n,m)$-functions that are differentially 2-uniform on a linear subspace satisfying some particular conditions that generalize crookedness.

As our second main result, we prove that for all $n \geq 4$ and any crooked $(n,n)$-function $F$, there exists an associated $(n,n)$-function $\varepsilon_F$ such that 
\[
v \cdot \varepsilon_F(u) = f_v(u) \oplus 1
\]
for all nonzero $u,v \in \F_2^n$, and we call this function the \textit{exclude parity adjoint} of $F$.
Equivalently, for $n \geq 4$, any crooked function $F$, and any nonzero $u \in \F_2^n$, the set of nonzero $v \in \F_2^n$ such that $|\{\set{x,y} \subseteq \set{0,u}^\perp : \beta_F(x,y)=v\}|$ is even is the complement of a linear hyperplane or is empty.
Furthermore, in the case that $F$ is quadratic, we show that \Cref{conj:Gorodilova} implies that $\varepsilon_F(u)=0$ if and only if $u=0$, and so the set of nonzero $v \in \F_2^n$ such that $|\{\set{x,y} \subseteq \set{0,u}^\perp : \beta_F(x,y)=v\}|$ is even is always the complement of a linear hyperplane when $F$ is a quadratic APN function.
Moreover, the exclude parity adjoint of a crooked function is always quadratic since its component functions are all quadratic.

Interestingly, it is possible for $\varepsilon_F$ to be APN.
In particular, we prove for $n \geq 4$, if $F$ is a crooked $(n,n)$-function, then $\varepsilon_F$ is APN if and only if $\pi_F$ is $(n-2)$th order sum-free, i.e. the sum of $\pi_F$ over any $(n-2)$-dimensional affine subspace is nonzero.
Also, in the case that $n \geq 4$ and $F$ is a Gold APN function, we have $F = \varepsilon_F$.
Meanwhile, it is also possible for $\varepsilon_F$ to not be APN while still having similar properties to $F$.
For example, we compute that the exclude parity adjoint of the Kim APN function $\kappa$ in $6$ variables is a differentially 4-uniform function whose image is linearly equivalent to $\im(\kappa)$ (implying it is a partial difference set).

As a corollary to both of our main results, we prove that if $n \geq 4$ is even, and $F$ is a quadratic APN $(n,n)$-function, then $F$ has at least $n$ semi-bent components. 
We prove as a corollary to the fact that $\varepsilon_F(u)$ is equal to the sum of points $b \in \F_2^n$ such that $b \cdot F$ is semi-bent and $u$ is not contained orthogonal of the linear kernel of $b \cdot F$.

We also describe a known open problem on the exponential sums 
\[
G^{(i)}_n = \sum_{x \in \F_{2^n}^\ast} (-1)^{\Tr(x^{2^i+1} + x^{-1})}
\]
of \cite{JohansenHellesethKholoshamseq} through a combinatorial condition.
In particular, it is conjectured that $G^{(i)}_n = G^{(\gcd(i,n))}_n$, and we will prove (using our results on the exclude parity function of a Gold APN function) the congruence $G^{(i)}_n \equiv G^{(1)}_n \pmod{16}$ when $\gcd(i,n)=1$.

We then prove that if $n \geq 4$ is even and $F$ is a plateaued APN $(n,n)$-function, then $\deg_{alg}(1_{\bentcomps{F}}) = \frac{n}{2}$, where $\bentcomps{F|_H} = \set{v \in \F_2^n : v \cdot F|_H \text{ is bent}}$.
Moreover, we prove that if $n \geq 4$ is even, $F$ is a quadratic APN function, and $H$ is a linear hyperplane, then the existence and defining properties of $\varepsilon_F$ imply that $\deg_{alg}(1_{\nearbentcomps{F|_H}}) = n-1$, where $\nearbentcomps{F|_H} = \set{v \in \F_2^n : v \cdot F|_H \text{ is near-bent}}$.

\section{Preliminaries}\label{prelim}

For an \textit{$n$-variable Boolean function} $f \colon \F_2^n \to \F_2$ and an affine subspace $A \subseteq \F_2^n$, we define the \textit{Walsh transform of $f$ on $A$} (or the \textit{Walsh transform of $f|_A$}) as 
$
W_{f|_A}(u) = \sum_{x \in A}(-1)^{f(x) + x\cdot u},
$
for all $u \in \F_2^n$,
where ``$\cdot$'' denotes an inner product (that is, a non-degenerate symmetric bilinear form) on $\F_2^n$.
In the case that we identify $\F_2^n$ with $\F_{2^n}$, then we define $a \cdot b = \Tr(ab)$, where $\Tr(x) = \sum_{i=0}^{n-1} x^{2^i} \in \F_2$ is the \textit{absolute trace} of $x$.
For a function $F \colon \F_2^n \to \F_2^m$, called an \textit{$(n,m)$-function} or a \textit{vectorial Boolean function}, the \textit{Walsh transform} of $F$ on $A$ is defined by
$
W_{F|_A}(u,v) = \sum_{x \in A} (-1)^{u \cdot x + v \cdot F(x)}
$
for all $(u,v) \in \F_2^n \times \F_2^m$.
The \textit{Walsh transform} of $f$ (resp. the \textit{Walsh transform} of $F$), denoted by $W_f$ (resp. $W_F$), is simply its Walsh transform on $\F_2^n$.

\textit{Parseval's relation} is the equality $\sum_{u \in \F_2^n}W_f^2(u)=2^{2n}$ and can be extended to $\sum_{u \in \F_2^n} W_{f|_A}^2(u) = 2^n |A|$ for any set $A \subseteq \F_2^n$.
The \textit{linearity} of $f$ is $\mathcal{L}(f) = \max_{u \in \F_2^n} |W_f(u)|$, and clearly $\mathcal{L}(f) \geq 2^{\frac{n}{2}}$ by Parseval's relation.
A Boolean function with linearity $2^{\frac{n}{2}}$ is called \textit{bent} or \textit{maximally nonlinear}, and clearly bent functions only exist for $n$ even.
If there exists a non-negative integer $\lambda$ such that $W_f(u) \in \set{0,\pm \lambda}$ for all $u \in \F_2^n$, then we say that $f$ is \textit{plateaued} with \textit{amplitude} $\lambda$.
All bent functions are plateaued since they satisfy $|W_f(u)| = 2^{\frac n 2}$ for all $u \in \F_2^n$.

The \textit{support} of an $n$-variable Boolean function $f$ is the set $\set{x \in \F_2^n : f(x) = 1}$, and the size of its support is called the \textit{Hamming weight} $\wt(f)$ of $f$.
We say that $f$ is \textit{balanced} if its Hamming weight is $2^{n-1}$.
Any $n$-variable Boolean function $f$ has a unique \textit{algebraic normal form} (ANF)
\[
f(x) = \bigoplus_{I \subseteq \set{1, \dots, n}} a_I \prod_{i \in I}x_i
\in \F_2[x_1, \dots, x_n]/(x_1^2 \oplus x_1, \dots, x_n^2 \oplus x_n),
\]
where $a_I \in \F_2$, and where we denote addition in $\F_2$ by $\bigoplus, \oplus$ while we reserve $\sum,+$ for addition in $\Z$ or $\F_2^n$.
The \textit{algebraic degree} $\deg_{alg}(f)$ of $f$ is equal to the largest degree of any (nonzero) monomial in the ANF of $f$ when $f$ is not identically zero, and the zero function is defined to have algebraic degree zero.
Boolean functions with algebraic degree at most $0,1,2$ are called constant, affine, and quadratic, respectively.
When $f$ is non-constant, the \textit{derivative of $f$ in the direction of $a$}, that is $D_af(x)=f(x)\oplus f(x+a)$ where $a \in \F_2^n$, has algebraic degree at most $\deg_{alg}(f)-1$ (with at least one derivative of $f$ having algebraic degree equal to $\deg_{alg}(f)-1$), see \cite{CarletBook}. 
Hence, quadratic Boolean functions are characterized by having all of their derivatives being affine.
We say that $f$ is \textit{partially-bent} if $D_a f$ is constant or balanced for all $a \in \F_2^n$ \cite{Carlet1993}.
Any bent function is partially-bent because bent functions are equivalently characterized by the property of $D_a f$ being balanced for all nonzero $a$ \cite{ROTHAUS1976300}.
Clearly, quadratic Boolean functions are also partially-bent, and it turns out that all partially-bent functions are plateaued. 
The \textit{linear kernel} of a Boolean function $f$ is the linear subspace 
$
\mathcal{E}_f = \set{a\in \F_2^n : D_a f \text{ is constant}}.
$

Two $(n,m)$-functions $F$ and $G$ are said to be \textit{affinely equivalent} if there exist affine isomorphisms $\varphi \colon \F_2^n \to \F_2^n$ and $\psi \colon \F_2^m \to \F_2^m$ such that $G = \psi \circ F \circ \varphi$.
For any affine invariant property $N$ (e.g., plateauedness, bentness, etc.) or parameter $P$ (e.g., algebraic degree, nonlinearity, etc.) for a Boolean function $f$, we define $N(f|_A)$ and $P(f|_A)$ to be equal to $N(f \circ \varphi)$ and $P(f \circ \varphi)$, respectively, where $\varphi \colon \F_2^k \to A$ is an affine isomorphism.

Let $C_k^n$ be the set consisting of $n$-variable Boolean functions $f$ such that $f|_A$ is plateaued for all $k$-dimensional affine subspaces $A$.
The authors proved in \cite{CarletThornburghRestrictions} that for $n\geq4$, we have the equivalence $f\in C_{n-1}^n$ if and only if $f$ is partially-bent, and moreover, for $3 \leq k \leq n-2$, the set $C_k^n$ equals the set of quadratic Boolean functions.

For an $(n,m)$-function $F$ and $v \in \F_2^m$, we say that the $n$-variable Boolean function $v \cdot F$ defined by $(v \cdot F)(x)=v\cdot F(x)$ for all $x \in \F_2^n$ is a \textit{component function} (or simply, a \textit{component}) of $F$.
An $(n,m)$-function is called \textit{bent}, \textit{quadratic}, or \textit{plateaued}, if its component functions are all bent, quadratic, or plateaued, respectively.
It is called \textit{strongly plateaued} if all of its component functions are partially-bent \cite{CarletPlateaued} (it is then plateaued and its second-order derivatives have an additional property).
We also define the \textit{algebraic degree} $\deg_{alg}(F)$ of $F$ as $\max_{v \in \F_2^m} \deg_{alg}(v \cdot F)$, and we define constant, affine, and quadratic vectorial Boolean functions in the same way as we do for Boolean functions.
The \textit{linearity} of $F$ is also defined by $\mathcal{L}(F) = \max_{v \in \F_2^m \setminus \set{0}} \mathcal{L}(v \cdot F)$.
By the result of \cite{CarletThornburghRestrictions} recalled above, for $n \geq 4$, an $(n,m)$-function $F$ is strongly plateaued if and only if it is plateaued on all affine hyperplanes of $\F_2^n$, and moreover, $F$ is quadratic if and only if for any (or for some) $3 \leq k \leq n-2$ the restriction of $F$ to any $k$-dimensional affine subspace is plateaued.

For a \textit{pseudo-Boolean function} $\varphi \colon \F_2^n \to \Z$, we define the \textit{Fourier-Hadamard transform} of $\varphi$ by 
$
\widehat{\varphi}(u) = \sum_{x \in \F_2^n}(-1)^{u \cdot x} \varphi(x),
$
for all $u \in \F_2^n$.
The \textit{convolutional product} of two pseudo-Boolean functions $\varphi, \psi \colon \F_2^n \to \Z$ is given by 
$
(\varphi \otimes \psi)(a) = \sum_{x \in \F_2^n}\varphi(x)\psi(x+a),
$
and it is clear that $(\varphi \otimes \psi)(a) = (\psi \otimes \varphi)(a)$.
The convolution theorem tells us that $\widehat{\varphi \otimes \psi}(u) = \widehat{\varphi}(u) \widehat{\psi}(u)$ (see e.g. \cite[Proposition 11]{CarletBook}).
It is clear that the Walsh transform of an $(n,m)$-function $F$ is equal to the Fourier-Hadamard transform of the indicator $1_{\graph F}$ of its graph 
\[
\graph{F} = \set{(x,F(x)) : x \in \F_2^n} \subseteq \F_2^n \times \F_2^m,
\]
where the indicator $1_{\graph F}$ is defined by 
$
1_{\graph F}(x,y) = \begin{cases}
    1 & y = F(x), \\
    0 & \text{otherwise}.
\end{cases}
$
That is, $W_F(u,v) = \widehat{1_{\graph{F}}}(u,v)$ for any $(u,v) \in \F_2^n \times \F_2^m$, and the same property holds for a restriction of $F$ to an affine subspace $A \subseteq \F_2^n$, i.e. $W_{F|_A}(u,v) = \widehat{1_{\graph{F|_A}}}(u,v)$.

For an $(n,m)$-function $F$, we say that $F$ is \textit{differentially $\delta$-uniform} for a positive integer $\delta$ if, for any $a \in \F_2^n \setminus \set{0}$, the derivative $D_a F(x) = F(x)+F(x+a)$ maps to any single element in $\F_2^m$ at most $\delta$ times.
In particular, if $F$ is an $(n,n)$-function, then $F$ is APN if and only if $F$ is differentially 2-uniform.
APN functions are mathematical objects that have been under scrutiny since the 1990s, and they remain to be an active area of research with many difficult open problems (e.g., \textit{the big APN problem} is to determine the existence of an APN permutation over an even dimension in general, with only one known example for $n=6$).
Recall that an $(n,n)$-function is crooked if the image set $\im(D_aF)$ is an affine hyperplane for all $a\in \F_2^n$, and so all crooked functions are APN by definition.
Quadratic APN functions are examples of crooked functions (because their nonzero derivatives are all affine maps with image sets of size $2^{n-1}$), and it remains a large open problem whether non-quadratic crooked functions exist \cite{Kyureghyan2007}. 

Studying differentially 2-uniform $(n,m)$-functions is also of interest in additive combinatorics because they are those $(n,m)$-functions whose graph $\graph F$ is a Sidon set in $\F_2^n\times \F_2^m$.
A \textit{Sidon set} $S \subseteq \F_2^d$ is a subset that does not contain a 2-dimensional affine subspace of $\F_2^d$, or equivalently, no four distinct points in $S$ have trivial sum.
Determining the largest size of a Sidon set in $\F_2^d$ is known to be a very difficult problem \cite{CzerwinskiPottLargeSidon}, and it is analogous to the famous \textit{cap set problem} (that is, to determine the largest size of a set in $\F_3^d$ that does not contain an affine line).
We say that a Sidon set $S \subseteq \F_2^d$ is \textit{maximal} if it is not contained in some larger Sidon set.
For a Sidon set $S \subseteq \F_2^d$ and any $a\in \F_2^d \setminus S$, we write
\[
\mult_S(a) = |\set{\set{x,y,z} \subseteq S : x+y+z=a}|,
\]
and we say that $\mult_S(a)$ is the \textit{exclude multiplicity} of $a$.
Indeed, for $a \notin S$, the value of $\mult_S(a)$ is equal to the number of 2-dimensional affine subspaces in $S \cup \set{a}$, and so $\mult_S(a)>0$ for all $a \notin S$ if and only if $S$ is a maximal Sidon set.
It is also conjectured that if $S$ is equal to the graph of an APN function $F \colon \F_2^n \to \F_2^n$, then $\graph{F}$ is a maximal Sidon set if $n \geq 3$ \cite{Carlet_apnGraphMaximal}.

An interesting family of Sidon sets, for which we know they are maximal (when $n \geq 3$), consists of those Sidon sets that are equal to the graphs of plateaued APN functions \cite{budaghyanCarletHellesetUpperBoundsDegree,Carlet_apnGraphMaximal}.
Furthermore, if $F \colon \F_2^n \to \F_2^n$ is a plateaued APN function, then $\mult_{\graph F}(a,b) \geq 2^{\frac{n}{2}-1} - 1$ for all $(a,b) \notin \graph{F}$ \cite{MihailaThornburgh2026}.
We will also require the following proposition in the sequel.
\begin{proposition}[{\cite{MihailaThornburgh2026}}]\label{prop:plateaued-oddmults}
    Assume $n \geq 3$.
    Let $F \colon \F_2^n \to \F_2^n$ be a plateaued APN function.
    Then $\mult_{\graph F}(a,b)$ is odd for all $(a,b) \in (\F_2^n)^2 \setminus \graph{F}$.
\end{proposition}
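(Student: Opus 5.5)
We count $\mult_{\graph F}(a,b)$ modulo $2$ through the Walsh transform of $F$. For $(a,b)\notin\graph F$, let $N(a,b)$ be the number of ordered triples $(x,y,z)\in(\F_2^n)^3$ with $x+y+z=a$ and $F(x)+F(y)+F(z)=b$. By the convolution theorem applied to $1_{\graph F}\otimes 1_{\graph F}\otimes 1_{\graph F}$, we get
\[
N(a,b)=2^{-2n}\sum_{(u,v)\in(\F_2^n)^2} W_F^3(u,v)(-1)^{u\cdot a+v\cdot b}.
\]
Since $(a,b)\notin\graph F$, no triple in this count can have two equal coordinates. Indeed, if $x=y$, then $z=a$ and $F(z)=b$, which is impossible. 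Hence every triple consists of three distinct points of $\graph F$. Every unordered set $\{x,y,z\}$ is counted $6$ times, so $N(a,b)=6\,\mult_{\graph F}(a,b)$. It therefore suffices to show $N(a,b)\equiv 6 \pmod{12}$, or equivalently, since the factor $3$ is harmless, $N(a,b)/2\equiv 3\pmod 6$. In practice it is enough to show $N(a,b)\equiv 2\pmod 4$.

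Next we use the plateaued APN structure. For an APN function, the fourth-moment identity $\sum_{u,v}W_F^4(u,v)=2^{3n+1}(3\cdot 2^n-2)$ holds. For plateaued $F$, each component $v\cdot F$ with $v\neq 0$ has amplitude $\lambda_v=2^{(n+s_v)/2}$, and the known result for plateaued APN functions is that $W_F(u,v)\in\{0,\pm 2^{(n+s_v)/2}\}$. The $v=0$ term contributes $2^{3n}[u=0]$, which gives $2^{n}$ after dividing by $2^{2n}$. Splitting the sum over $v\neq0$, a component of amplitude $\lambda_v$ satisfies $W_F^3(u,v)=\lambda_v^2W_F(u,v)$. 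Therefore
\[
N(a,b)=2^n+2^{-2n}\sum_{v\neq0}\lambda_v^2\sum_u W_F(u,v)(-1)^{u\cdot a+v\cdot b}=2^n+2^{-n}\sum_{v\neq0}\lambda_v^2(-1)^{v\cdot(b+F(a))}.
\]
Writing $\lambda_v^2=2^{n+s_v}$ gives
\[
N(a,b)=2^n+\sum_{v\neq 0}2^{s_v}(-1)^{v\cdot c},\qquad c=b+F(a)\neq 0.
\]

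The main step is to evaluate this modulo $4$ using the distribution of the exponents $s_v$. Terms with $s_v\ge 2$ vanish mod $4$, and for $n\ge3$ so does $2^n$. Write $B=\{v\neq0 : s_v=0\}$ for the bent components and $T=\{v: s_v=1\}$. Then
\[
N\equiv \sum_{v\in B}(-1)^{v\cdot c}+2|T| \pmod 4.
\]
The difficulty is controlling $|T|$ and the character sum over $B$; here I would split by the parity of $n$.

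For $n$ odd, no bent components exist. For odd $n$, plateaued APN functions are known to be AB, so every $s_v=1$, $|T|=2^n-1$, and $N\equiv 2(2^n-1)\equiv 2\pmod 4$. For $n$ even, every $s_v$ is even, so $T=\emptyset$ and $N\equiv \sum_{v\in B}(-1)^{v\cdot c}\pmod 4$. I would then use the known relation between the fourth moment and the $s_v$: the identity $\sum_{v\neq0}2^{s_v}=2^{n+1}-2$ gives $\sum_{v\neq0}(2^{s_v}-1)=2^n-1$. From this, $|B|$ is odd and exceeds $2^n-1-(2^n-1)/3$. Together with the fact that the non-bent components with $s_v\ge2$ contribute multiples of $4$, this reduces the problem to showing $\sum_{v\in B}(-1)^{v\cdot c}\equiv 2\pmod 4$, i.e.\ $|B\cap c^\perp|-|B\setminus c^\perp|\equiv 2\pmod 4$.

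Equivalently, I would obtain the same congruence by applying the formula to $(a,b)=(a,F(a))$, where it counts degenerate triples. There $N(a,F(a))=3\cdot 2^n-2$, which is $\equiv 2\pmod 4$, and the two formulas differ by $\sum_{v\ne0}2^{s_v}\bigl((-1)^{v\cdot c}-1\bigr)=-2\sum_{v\notin c^\perp}2^{s_v}$. So it suffices to show that $\sum_{v\notin c^\perp}2^{s_v}$ is even, i.e.\ that $|B\setminus c^\perp|$ is even. This is the expected obstacle. I would attack it by applying the same fourth-moment identity to the restriction of $F$ to the hyperplane $c^{\perp}$, or by the duality that for $n$ even the bent components of a plateaued APN function number $\frac23(2^n-1)$ and form the complement of a structured set. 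Either route should show that the outside half meets $B$ in an even number of points.
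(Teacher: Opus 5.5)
Your set-up is correct and matches the paper's Remark~\ref{rem:plateaued-restriction-mults} with $A=\F_2^n$: $N(a,b)=2^n+\sum_{v\neq0}2^{s_v}(-1)^{v\cdot c}$. The odd case follows, and for $n$ even you correctly reduce everything to showing that $|B\setminus c^\perp|$ is even, i.e.\ $\widehat{1_B}(c)\equiv 2\pmod 4$. (The paper does not prove this proposition; it imports it from \cite{MihailaThornburgh2026} and attributes exactly this congruence to that reference.) But this congruence is the entire content of the even case, and your proposal leaves it unproved.

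Neither suggested route works as stated:
\begin{itemize}
\item The fourth moment of the $(n,n-1)$-function with components $v\cdot F$, $v\in c^\perp$, equals $2^{2n}\sum_{v\in c^\perp}\lambda_v^2$. Counted combinatorially, it returns exactly the quantity $N(a,b)$ you are trying to evaluate, so it is circular. Restricting the \emph{domain} to a hyperplane instead gives information about linear kernels, not about how $B$ splits along $c^\perp$.
\item The count $|B|=\frac23(2^n-1)$ is false in general. From $\sum_{v\neq0}(2^{s_v}-1)=2^n-1$ it holds only if every non-bent component is semi-bent. It fails, e.g., for the quadratic APN functions of linearity $2^{n-1}$ ($n=6,8$) mentioned in the paper.
\end{itemize}
Your intermediate claim that $|B|$ is odd is also wrong: your own formula at $c=0$ gives $|B|\equiv N(a,F(a))=3\cdot 2^n-2\equiv 2\pmod 4$. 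Minor point: the APN fourth moment is $2^{3n}(3\cdot 2^n-2)$, not twice that, although the identity $\sum_{v\neq0}2^{s_v}=2^{n+1}-2$ you use is right.

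The missing ingredient is the subspace parity of bent components, which the paper uses in Lemma~\ref{lem:indbentcpts-upperbound} (from \cite{beneteau2025walshspectraquadraticapn}). It has a short proof. For any linear subspace $W$ and any $u$,
\[
\sum_{v\in W}W_F(u,v)=|W|\sum_{x:\,F(x)\in W^\perp}(-1)^{u\cdot x}\equiv 0\pmod{|W|}.
\]
For $n$ even:
\begin{itemize}
\item if $v\in B$, then $W_F(u,v)=\pm 2^{n/2}\equiv 2^{n/2}\pmod{2^{n/2+1}}$;
\item for every other $v$, $W_F(u,v)\equiv 0\pmod{2^{n/2+1}}$, since non-bent $v\neq0$ have even $s_v\geq 2$ and $W_F(u,0)\in\set{0,2^n}$.
\end{itemize}
Hence if $\dim W\geq \frac n2+1$, then $2^{n/2}|B\cap W|\equiv 0\pmod{2^{n/2+1}}$, i.e.\ $|B\cap W|$ is even.

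Take $W=\F_2^n$ and $W=c^\perp$; the latter has dimension $n-1\geq\frac n2+1$ because $n\geq 4$. Then $|B|$ and $|B\cap c^\perp|$ are both even, so $|B\setminus c^\perp|$ is even and $N(a,b)\equiv 2\pmod 4$. This completes your argument.
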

A subclass of plateaued APN functions consists of those functions such that $W_F(u,v) \in \{0,\pm 2^{\frac{n+1}{2}}\}$ for all $(u,v) \in (\F_2^n)^2 \setminus \set{(0,0)}$, and we call such a function \textit{almost bent} (AB).

For any subsets $S,T \subseteq \F_2^d$ and $a \in \F_2^d$, it is clear that $(1_S \otimes 1_T)(a)$ equals
\[
\sum_{x \in \F_2^d} 1_S(x) 1_T(x+a) 
= |\set{x \in S : x+a \in T}| 
= |\set{(x_1, x_2) \in S \times T : x_1 + x_2 = a}|.
\]
More generally, by induction on $k$, for any subsets $S_1, \dots, S_k \subseteq \F_2^d$, we have for any point $a \in \F_2^d$ that
\begin{equation}\label{eq:set-conv-equality}
(1_{S_1} \otimes 1_{S_2} \otimes \cdots \otimes 1_{S_k})(a) = 
|\set{(x_1, \dots, x_k) \in S_1 \times \cdots \times S_k : x_1 + \cdots + x_k=a}|.
\end{equation}
From \cref{eq:set-conv-equality}, we see that for a Sidon set $S \subseteq\F_2^d$ and $a \in \F_2^d \setminus S$, we have 
\begin{equation}\label{eq:multS-cubesFHT}
\mult_S(a) = \frac{1}{6} (1_S \otimes 1_S \otimes 1_S)(a) = \frac{1}{6 \cdot 2^d} \widehat{(\widehat{1_S})^3}(a),
\end{equation}
with the second equality following from the convolution theorem and the fact that $\widehat{\widehat{\varphi}}=2^d\varphi$ for any pseudo-Boolean function $\varphi\colon \F_2^d \to \Z$.

It is also straightforward that $(1_S\otimes1_S \otimes 1_S)(a)\geq 3|S|-2$ for any $a \in S$, with equality holding for all $a \in S$ if and only if $S$ is Sidon.
Indeed, for any $a \in S$, the value of $(1_S\otimes1_S \otimes 1_S)(a)$ is $|\set{(x,y,z) \in S^3: x+y+z=a}|$.
This set includes the triple $(a,a,a)$ or some permutation of $(a,b,b)$, where $b \in S \setminus \set{a}$, and it contains only these triples when $S$ is Sidon.

\begin{remark}\label{rem:plateaued-restriction-mults}
Let us consider an $(n,m)$-function $F$ that is differentially 2-uniform and plateaued on an affine subspace $A \subseteq \F_2^n$, and let $S = \graph F \cap (A \times \F_2^m) = \graph{F|_A}$.
For any $v \in \F_2^m$, denote by $\lambda_{v,A}$ the amplitude of $v \cdot F|_A$.
Since $W_{F|_A}^3(u,v) = \lambda_{v,A}^2 W_{F|_A}(u,v)$ for any $(u,v) \in \F_2^n \times \F_2^m$, we have for any $t \in \F_2^n$ and $c\in \F_2^m \setminus \set{0}$ that
\begin{align*}
\mult_S(t,F(t)+c) 
&= \frac{1}{6 \cdot 2^{n+m}}\widehat{W_{F|_A}^3}(t,F(t)+c)\\
&=\frac{1}{6 \cdot 2^{n+m}} \sum_{(u,v) \in \F_2^n \times \F_2^m} (-1)^{u \cdot t + v \cdot (F(t)+c)} W^3_{F|_A}(u,v) \\
&= \frac{1}{6 \cdot 2^{n+m}}\sum_{v \in \F_2^m} (-1)^{v \cdot (F(t)+c)}\lambda_{v,A}^2 \sum_{x \in A}(-1)^{v \cdot F(x)} \sum_{u \in \F_2^n}(-1)^{u \cdot (t+x)}\\
    &= \frac{1_A(t)}{6 \cdot 2^m} \sum_{v \in \F_2^m} (-1)^{v \cdot c} \lambda_{v,A}^2,
\end{align*}
with the last equality following from the fact that $\sum_{u \in \F_2^n} (-1)^{u \cdot x}=2^n \cdot 1_{\set{0}}(x)$.
This implies that for any nonzero $c \in \F_2^m$, the value of $\mult_S(t,F(t)+c)$ is constant as $t$ ranges across $A$.
It was shown in \cite[Theorem 2]{CarletThornburghRestrictions} (and discussed in Remark 5 of this reference) that if an $n$-variable plateaued Boolean function $f$ is plateaued on a hyperplane $H \subseteq \F_2^n$, then $f|_{H^c}$ is plateaued as well, and both $f|_H$ and $f|_{H^c}$ have the same amplitude.
Assuming that $F$ is plateaued and $H$ is an affine hyperplane, we then know that $\lambda_{v,H}= \lambda_{v,H^c}$ for all $v \in \F_2^m$.
Hence, taking $S' = \graph{F} \cap (H^c \times \F_2^m) = \graph{F|_{H^c}}$, we have $\mult_S(t,F(t)+c) = \mult_{S'}(t',F(t')+c)$ for all $c \in \F_2^m \setminus \set{0}$, $t \in H$, and $t' \in H^c$.
\end{remark}

\begin{remark}\label{rem:amplitude-sums-Sidon-iff}
Let $F$ be an $(n,m)$-function that is plateaued on an affine subspace $A \subseteq \F_2^n$, and for any $v \in \F_2^m$, denote by $\lambda_{v,A}$ the amplitude of $v \cdot F$ on $A$.
We can prove the equivalence that
 \[
    \sum_{v \in \F_2^m} \lambda_{v, A}^2 \geq  2^m(3|A|-2)
    \]
    with equality if and only if $S = \graph F \cap (A \times \F_2^m)$ is a Sidon set.
    Indeed, denoting by $\supp(W_{v \cdot F|_A})$ the set $\set{u \in \F_2^n : W_{v \cdot F|_A}(u) \neq 0}$, we have upon applying Parseval's relation that
    \[
    \sum_{(u,v) \in \F_2^n \times \F_2^m} W_{F|_A}^4(u,v) = \sum_{v \in \F_2^m} \lambda_{v,A}^4 |\supp(W_{v \cdot F|_A})| =2^n|A| \sum_{v \in \F_2^m} \lambda_{v,A}^2.
    \]
    By \cite[Proposition 2.1]{CarletMesnager2022}, we know that $\sum_{(u,v) \in \F_2^n \times \F_2^m} (\widehat{1_S})^4(0,0) \geq 2^{n+m}(3|A|^2 - 2|A|)$ if and only if $S$ is Sidon.
    Since $\widehat{1_S}(u,v) = W_{F|_A}(u,v)$, we then have that $\sum_{v \in \F_2^m}\lambda_{v,A}^2 \geq 2^m(3|A|-2)$ with equality if and only if $S$ is Sidon.
\end{remark}

\section{Boolean functions associated to functions with the crooked property}\label{sec:associated-fcns}

For an $(n,m)$-function $F$, we say that $F$ has the \textit{crooked property} if all of its differential sets are affine spaces \cite{Charpin2022}; that $F$ is \textit{strongly plateaued} if all of its component functions are partially-bent \cite{CarletPlateaued}; that $F$ is \textit{crooked of codimension $k$} if all nonzero differential sets of $F$ are affine spaces of codimension $k$ \cite{StructuralWeaknessPermutations} (in the case that $n=m$, a function is crooked if and only if it is crooked of codimension $1$).
The three classes of functions satisfying these properties are related, and we require the following proposition (which strengthens a result of \cite{CarletPlateaued}) to see why.

\begin{proposition}\label{prop:strongplat-char} Any $(n,m)$-function $F$ is strongly plateaued if and only if the image set $\im(D_aF)=(D_aF)(\F_2^n)$ of any derivative $D_aF$ is an affine space, and $D_aF$ is balanced from $\F_2^n$ to this affine space.
\end{proposition}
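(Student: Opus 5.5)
The plan is to reduce both sides of the equivalence to a statement about the second-order derivatives $D_aD_b F$ and the value distribution of $D_aF$. Partial-bentness of $v\cdot F$ means that $D_a(v\cdot F)$ is constant or balanced for every $a$. Fix $a$ and write $V_a=\{v\in\F_2^m : D_a(v\cdot F)\ \text{is constant}\}$. Since $D_a(v\cdot F)=v\cdot D_aF$, the map $v\mapsto$ (the constant value) is linear on $V_a$, so $V_a$ is a linear subspace.

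The core computation is the Fourier analysis of the distribution of $D_aF$. For $c\in\F_2^m$, let
\[
N_a(c)=|\{x\in \F_2^n : D_aF(x)=c\}|.
\]
Its Fourier--Hadamard transform is
\[
\widehat{N_a}(v)=\sum_{x}(-1)^{v\cdot D_aF(x)}.
\]
This value is $\pm 2^n$ exactly when $v\in V_a$, and it is $0$ exactly when $v\cdot D_aF$ is balanced.

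For the forward direction, assume $F$ is strongly plateaued. Then $\widehat{N_a}(v)=2^n(-1)^{v\cdot c_a}$ on $V_a$ for some $c_a\in\F_2^m$, since $v\mapsto$ the constant is a linear functional on $V_a$ and extends to $\F_2^m$. Also $\widehat{N_a}(v)=0$ off $V_a$. Inverting the transform gives
\[
N_a(c)=2^{n-m}\sum_{v\in V_a}(-1)^{v\cdot(c+c_a)}=2^{n-m}|V_a|\,1_{c_a+V_a^\perp}(c).
\]
Hence $\im(D_aF)=c_a+V_a^\perp$ is an affine space, and $D_aF$ takes each of its values equally often, i.e.\ it is balanced onto this image.

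For the converse, assume $\im(D_aF)=c_a+E_a$ with $E_a$ a linear subspace, and that $D_aF$ is balanced onto it. Then $N_a=\frac{2^n}{|E_a|}1_{c_a+E_a}$. Its transform is $2^n(-1)^{v\cdot c_a}1_{E_a^\perp}(v)$, because the Fourier--Hadamard transform of the indicator of a linear subspace is $|E_a|$ times the indicator of its orthogonal. So for each $v$, the sum $\sum_x(-1)^{v\cdot D_aF(x)}$ is either $\pm2^n$, meaning $D_a(v\cdot F)$ is constant, or $0$, meaning it is balanced. This is exactly partial-bentness of every component, so $F$ is strongly plateaued.

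The only delicate step is the forward direction: one must justify that the constants $\varepsilon(v)$ with $D_a(v\cdot F)\equiv\varepsilon(v)$ define a linear functional on $V_a$. This holds because $D_a((v+w)\cdot F)=D_a(v\cdot F)\oplus D_a(w\cdot F)$. The functional is then realized as $v\mapsto v\cdot c_a$ for some $c_a\in\F_2^m$ by non-degeneracy of the inner product. The rest is the standard character-sum inversion. The case $a=0$ is trivial: the image is $\{0\}$ and all components have a constant zero derivative.
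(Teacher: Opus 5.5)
Your proof is correct, but it takes a different route from the paper. The paper does not argue from the definition of partial-bentness. It imports the characterization from \cite{CarletPlateaued}: $F$ is strongly plateaued if and only if $|\{b : D_aF(b)=D_aF(x)+w\}|$ is independent of $x$ for every $(a,w)$. It then argues combinatorially. Independence of $x$ forces $\im(D_aF)$ to be invariant under translation by $\im(D_aF)+\im(D_aF)$, so the image is affine, and the case $w=0$ gives balancedness. The converse is a direct count of matches.

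You instead take the Fourier--Hadamard transform of the value distribution $N_a$ of $D_aF$. You note that partial-bentness of every component means exactly that $\widehat{N_a}$ takes values in $\{0,\pm 2^n\}$, with the nonzero part being a character on the subspace $V_a$, and then you invert.

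The paper's argument is shorter once the cited characterization is granted. Yours is self-contained and proves more: the direction space of $\im(D_aF)$ is $V_a^\perp$, where $V_a=\{v : a\in\mathcal{E}_{v\cdot F}\}$, and each value is taken $2^n/|V_a^\perp|$ times. For a crooked $(n,n)$-function this gives $V_a=\{0,\pi_F(a)\}$ for $a\neq 0$, that is, $\mathcal{E}_{b\cdot F}=\pi_F^{-1}(b)\cup\{0\}$ for every nonzero $b$. That is exactly the fact from \cite{Kyureghyan2007} which the paper invokes later without proof.
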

\begin{proof}
As shown in \cite{CarletPlateaued}, $F$ is strongly plateaued if and only if for every $(a,w) \in \F_2^n \times \F_2^m$, the size of the set $\set{b \in \F_2^n : D_a F(b) = D_a F(x)+w}$ does not depend on the choice of $x \in \F_2^n$.
Let $F$ be strongly plateaued. 
Then, $D_a F$ matches at least once $D_aF(x)+w$ ({\em i.e.}, we have $w\in D_aF(x)+\im(D_aF)$) if and only if it matches  at least once $D_aF(y)+w$ ({\em i.e.}, we have $w\in D_aF(y)+\im(D_aF)$).  Hence, the set $\im(D_aF)$ is invariant under translation by any element of $\im(D_aF)+\im(D_aF)$ and is then  an affine space. Moreover, taking $w=0$, we have that for every $x,y$, the values $D_aF(x)$ and $D_aF(y)$ are matched the same number of times by $D_aF$. Conversely,  if for every $a$, the image set $\im(D_aF)$ is an affine space and every element in it is matched the same number of times by $D_aF$, then  $D_aF$ matches the same number of times any two values $D_aF(x)+w$ and $D_aF(y)+w$, since either $w$ belongs to the underlying space and $D_aF(x)+w$ and $D_aF(y)+w$ both belong to $\im(D_aF)$ or $w$ is outside the underlying space and the number of matches equals 0 in both cases. 
\end{proof}

Hence, strongly plateaued function has the crooked property \cite{CarletPlateaued}. 
By \Cref{prop:strongplat-char}, it is straightforward that if $F$ is a strongly plateaued $(n,m)$-function, then there exists a $k$ such that $F$ is crooked of codimension $k$ if and only if $|\set{\delta_F(a,b) : a \in \F_2^n \setminus \set{0}, b\in \F_2^m}|=2$, i.e. $F$ is \textit{differentially 2-valued} (for literature on differentially 2-valued functions, see  \cite{Tang2020,CHARPIN2019188,Charpin2019DiffUni,Peng2020,Blondeau2010,KolschPolujan2026}). 
Also, note that if $F$ is differentially 2-uniform, then $F$ has the crooked property if and only if it is strongly plateaued.

In general, the three classes containing functions with the crooked property, strongly plateaued functions, and functions that are crooked of codimension $k$ are closed under CCZ equivalence.
For example, the Kim APN function $x \mapsto x^3 + x^{10} + \alpha x^{24}$ defined on $\F_{2^6}$, where $\alpha \in \F_{2^6}^\ast$ is a primitive element, is a crooked function (because it is quadratic APN) that is CCZ equivalent to a permutation \cite{browningMcQuisttanWolfe}. 
This example is illustrative as an APN permutation of an even number of variables cannot have a nontrivial partially-bent component \cite{Calderini2017}.
However, the component functions of crooked functions are all partially-bent.

For an $(n,m)$-function $F$ with the crooked property, denote by $U_a$ the underlying vector space of $\im(D_aF)$ for all $a \in \F_2^n$.
In this section, we study the indicators of these linear subspaces, and we show that they can describe various properties of $F$.

Let us first provide a motivating example.
Assume that $F$ is a crooked $(n,n)$-function.
Recall that the ortho-derivative of $F$ is the uniquely defined function $\pi_F \colon \F_2^n \to \F_2^n$ such that $\pi_F(0)=0$ and $\set{0,\pi_F(a)}^\perp=U_a$ for all nonzero $a \in \F_2^n$.
For any $v \in \F_2^n$, the function $v \cdot \pi_F$ of takes value $0$ at $a \in \F_2^n \setminus \set{0}$ if and only if $v \in U_a$.
Note that if $a =0$, then $v \cdot \pi_F(a)=v\cdot \pi_F(0)=0$ for all $v \in \F_2^n$.
This provides the equality
\[
v\cdot \pi_F(a) = \begin{cases}
    0 & a = 0, \\
    1_{\F_2^n \setminus U_a}(v) & a \neq 0
\end{cases}
\]
which holds for all $a,v \in \F_2^n$.

We are interested in this particular relation satisfied by the component functions of the ortho-derivatives of crooked functions, and we wish to modify it in two directions: first we wish to consider the restriction of a given function $F$ to an affine subspace $A$ of the domain of $F$, and second we consider the case where the image sets of the derivatives of this restriction are affine subspaces of the codomain of $F$ (without any restriction on their dimensions).
For an $(n,m)$-function $F$ and an affine subspace $A \subseteq \F_2^n$ with underlying vector space $L$, we say that $F$ has the \textit{crooked property on $A$} if $\im (D_a F|_A)$ is an affine space for all $a \in L$.

\begin{definition}\label{def:generalized-ortho}
    Let $A \subseteq \F_2^n$ be an affine subspace with underlying vector space $L$. 
    Let $F$ be an $(n,m)$-function with the crooked property on $A$, and denote by $U_{a,A}$ the underlying vector space of $\im (D_a F|_A)$ for all $a \in L$.
    For any $v \in \F_2^m$, define the Boolean function
    \[
    \pi_{F|_A}^v \colon L \to \F_2
    \quad \text{such that} \quad 
    \pi_{F|_A}^v(a)=
    \begin{cases}
        0 & a = 0, \\
        1_{\F_2^m \setminus U_{a,A}}(v) & a \neq 0,
    \end{cases}
    \]
    for all $a \in L$.
\end{definition}

One may then ask when the Boolean functions $\pi_{F|_A}^v$ can equal the component function $v \cdot G$ of some vectorial function $G$, that we can denote by $\pi_{F|_A}$ (that is, when $\pi_{F|_A}^v + \pi_{F|_A}^{v'}$ always equals $\pi_{F|_A}^{v+v'}$.
Then, we have $\pi_{F|_A}^v = v \cdot \pi_{F|_A}$.
In such a case, for any nonzero $a \in L$, the function $1_{\F_2^m \setminus U_{a,A}}$ is linear, implying $U_{a,A}$ is equal to $\F_2^m$ or is the complement of a linear hyperplane of $\F_2^m$.
If $a \in L$ is such that $\im(D_aF|_A)=\F_2^m$, then we know $\dim(L)\geq m+1$.
Otherwise, if $a \in L$ such that $\im(D_aF|_A)$ is an affine hyperplane of $\F_2^m$, then we know $\dim(L) \geq m$.
So, if $n=m$ and $F$ is crooked, then $\dim(L)=n$, so $L = \F_2^n$ and $\pi_{F|_A}= \pi_F$.

\begin{remark}\label{rem:cpts-ortho-generalized}
    Let $F \colon \F_{2^n} \to \F_{2^n}$ be defined by $F(x)=x^3$, let $v \in \F_{2^n}^\ast$, and let $L\subseteq \F_{2^n}$ be a $k$-codimensional linear subspace.
    Throughout this example, we always consider $a$ to be in $L 
    \setminus \set{0}$.
    Since $D_a F(x)$ equals $a^2x+ax^2$ plus a constant, we have $U_{a,L} = \set{a^2x+ax^2 : x \in L}$.
    Note $U_{a,L} = a^3 \set{y^2 + y : y \in a^{-1} L}$.
    Let $H = \set{y \in \F_{2^n} : \Tr(y) = 0}$ (it is well-known that $H$ is the image of the $2$-to-$1$ linear map $x \mapsto x^2 + x$), and let $S \colon H \to \F_{2^n}$ be an affine map such that $S(b)^2 + S(b)=b$ for all $b \in H$ (e.g., after fixing an element $d$ such that $\Tr(d)=1$, that is $d \notin H$, we can take $S(b) = \sum_{j=1}^{n-1} b^{2^j} \sum_{\ell=0}^{j-1} d^{2^\ell}$ or $S(b) =1+ \sum_{j=1}^{n-1} b^{2^j} \sum_{\ell=0}^{j-1} d^{2^\ell}$, see \cite[Section 14.5]{CarletBook}).
    We have $z \in U_{a,L}$ if and only if $y^2 + y = \frac{z}{a^3}$ for some $y \in a^{-1} L$.
    Note that if $\frac{z}{a^3} \in H$, then the two solutions to $y^2 +y = \frac{z}{a^3}$ are $S\parens{\frac{z}{a^3}}$ and $1+S\parens{\frac{z}{a^3}}$.
    Therefore, $z \in U_{a,L}$ if and only if $\Tr\parens{\frac{z}{a^3}}=0$ and $S\parens{\frac{z}{a^3}} \in a^{-1}L$.
    Taking $z = v$, this gives $\pi_{F|_L}^v(a) =0$ if and only if $\Tr\parens{\frac{v}{a^3}}=0$ and $aS\parens{\frac{v}{a^3}} \in L$.
    Setting $L = \langle e_1, \dots, e_k\rangle^\perp$ for some linearly independent $e_1, \dots, e_k \in \F_{2^n}$, we then have 
    \[
    \pi_{F|_L}^v(a) 
    = 
    \begin{cases}
        1 & \Tr\parens{\frac{v}{a^3}}=1, \\
    1 \oplus \prod_{i=1}^k \parens{1 \oplus \Tr\parens{e_i a S\parens{\frac{v}{a^3}}}} & \Tr\parens{\frac{v}{a^3}}=0.
    \end{cases}
    \]
    By fixing $d \in \F_{2^n}$ with $\Tr(d)=1$ and taking $S(b) = \sum_{j=1}^{n-1} b^{2^j} \sum_{\ell=0}^{j-1} d^{2^\ell}$ for any $b \in \F_{2^n}$ (the value of $S(b)$ only matters when $b \in H$, but for notational purposes we extend $S$ to $\F_{2^n}$), we have  
    \[
    \pi_{F|_L}^v(a) = 1 
    \oplus \parens{1 \oplus \Tr\parens{\frac{v}{a^3}}}
    \prod_{i=1}^k \parens{1 \oplus \Tr\parens{e_i a 
    \sum_{j=1}^{n-1} \parens{\frac{v}{a^3}}^{2^j} \sum_{\ell=0}^{j-1} d^{2^\ell}
    }}
    \]
    for all $a \in L\setminus \set{0}$.
    When $L = \F_{2^n}$, the above equation exactly recovers the well-known fact that the ortho-derivative of $F(x) = x^3$ has component functions given by $\Tr(v\pi_F(a)) = \Tr\parens{\frac{v}{a^3}}$ \cite{CCZ} (where we define $0^{-1}:=0$).
\end{remark}

We now prove the following key lemma which will lead to us providing a combinatorial description of the Walsh spectrum of the ortho-derivative of a crooked function (we also note that this lemma generalizes \cite[Proposition 5.10]{CarletPiccioneStrongDProperty}).

\begin{lemma}\label{lem:ortho-cpts-Walsh1}
    Let $A \subseteq \F_2^n$ be an affine subspace, and let $F$ be an $(n,m)$-function that has the crooked property on $A$.
    If $v \in \F_2^m$ is nonzero, then  
    \begin{align*}
    W_{\pi_{F|_A}^v}(u) 
    &=2\sum_{a \in \Lambda_v} (-1)^{u \cdot a} + 2 - |L|1_{L^\perp}(u)\\
    &= 2\cdot \widehat{1_{\Lambda_v}}(u) + 2 - |L|1_{L^\perp}(u),
    \end{align*}
    for all $u \in \F_2^n$, where $\Lambda_v = \set{a \in L \setminus \set{0} : v \in U_{a,A}}$ and $U_{a,A}$ is the underlying vector space of $\im(D_aF|_A)$.
    \end{lemma}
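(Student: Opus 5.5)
This is a direct computation from the definition of $\pi_{F|_A}^v$. The function is defined on $L$, so I read its Walsh transform as a sum over $L$:
\[
W_{\pi_{F|_A}^v}(u)=\sum_{a\in L}(-1)^{\pi_{F|_A}^v(a)+u\cdot a}.
\]
This is the same as $W_{f|_L}$ for any extension $f$ to $\F_2^n$, which is consistent with the convention of \Cref{prelim}. The plan is to split $L$ into three pieces: $\set{0}$, $\Lambda_v$, and $L\setminus(\Lambda_v\cup\set{0})$. On each piece the value of $\pi_{F|_A}^v$ is read off directly from \Cref{def:generalized-ortho}.

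First, $\pi_{F|_A}^v(0)=0$, so $a=0$ contributes $+1$. Second, for $a\in L\setminus\set{0}$, we have $\pi_{F|_A}^v(a)=0$ exactly when $v\in U_{a,A}$, that is, when $a\in\Lambda_v$. Such $a$ contribute $(-1)^{u\cdot a}$. Every remaining nonzero $a$ contributes $-(-1)^{u\cdot a}$. This gives
\[
W_{\pi_{F|_A}^v}(u)=1+\sum_{a\in\Lambda_v}(-1)^{u\cdot a}-\sum_{a\in L\setminus(\Lambda_v\cup\set{0})}(-1)^{u\cdot a}.
\]

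Next, rewrite the last sum as
\[
\sum_{a\in L}(-1)^{u\cdot a}-1-\sum_{a\in\Lambda_v}(-1)^{u\cdot a}.
\]
The standard character sum identity gives $\sum_{a\in L}(-1)^{u\cdot a}=|L|\,1_{L^\perp}(u)$. Substituting yields
\[
2\sum_{a\in\Lambda_v}(-1)^{u\cdot a}+2-|L|\,1_{L^\perp}(u).
\]
The second displayed form follows because $\Lambda_v\subseteq L\subseteq\F_2^n$, so $\widehat{1_{\Lambda_v}}(u)=\sum_{a\in\Lambda_v}(-1)^{u\cdot a}$.

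There is no real obstacle here. The only points needing care are the following.
\begin{itemize}
\item Fixing the convention that the Walsh transform is taken over the domain $L$ of $\pi_{F|_A}^v$.
\item Noting that the hypothesis $v\neq 0$ is not actually used in the computation. It only ensures $\Lambda_v$ is a proper subset of interest, since $0\in U_{a,A}$ always holds, so $v=0$ would give $\Lambda_0=L\setminus\set{0}$.
\item Checking that the crooked property on $A$ guarantees each $U_{a,A}$ is well defined, so that $\Lambda_v$ makes sense.
\end{itemize}
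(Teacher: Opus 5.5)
Your proof is correct and is essentially the paper's computation. The paper writes $(-1)^{\pi_{F|_A}^v}=1-2\pi_{F|_A}^v$ and sums over the nonzero $a\notin\Lambda_v$, whereas you split $L$ directly into $\set{0}$, $\Lambda_v$ and the remainder; both then close with $\sum_{a\in L}(-1)^{u\cdot a}=|L|1_{L^\perp}(u)$, and your observation that $v\neq 0$ is never used is accurate.
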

\begin{proof}
Let $v \in \F_2^m$ be nonzero, let $L$ be the underlying vector space of $A$, and for all $a \in L$, let $U_{a,A}$ be the underlying vector space of $\im (D_a F|_A)$.
Applying the equality $(-1)^{\pi_{F|_A}^v} = 1-2\pi_{F|_A}^v$, for any $u\in \F_2^n$, the value of $W_{\pi_{F|_A}^v}(u)$ equals 
\[
\sum_{a \in L}(-1)^{\pi_{F|_A}^v(a) + u \cdot a}=
|L| 1_{L^\perp}(u) - 2 \sum_{a \in L}(-1)^{u \cdot a} \pi_{F|_A}^v(a).
\]
Since $\pi_{F|_A}^v(a)=1$ if and only if $a \in L \setminus \set{0}$ such that $v \notin U_{a,A}$, we have $W_{\pi_{F|_A}^v}(u) = |L| 1_{L^\perp}(u) - 2 \sum_{a \in L\setminus \set{0}, v \notin U_{a,A}}(-1)^{u \cdot a}$.
From the equality $\set{a \in L \setminus \set{0} : v \notin U_{a,A}} = (L\setminus \set{0}) \setminus \Lambda_v$, we have 
\begin{align*}
\sum_{a \in L\setminus \set{0}, v \notin U_{a,A}}(-1)^{u \cdot a}
&= \sum_{a \in L\setminus \set{0}}(-1)^{u \cdot a} -\sum_{a \in \Lambda_v} (-1)^{u \cdot a} \\
&= |L| 1_{L^\perp}(u) -1 -\sum_{a \in \Lambda_v} (-1)^{u \cdot a}.
\end{align*}
Hence, $W_{\pi_{F|_A}^v}(u) =2\sum_{a \in \Lambda_v} (-1)^{u \cdot a} + 2 - |L|1_{L^\perp}(u)$.
\end{proof}

We will see in the following lemma that \Cref{lem:ortho-cpts-Walsh1} can be used to provide a more refined description of the Walsh transform of $\pi_{F|_A}^v$ when $F$ has the crooked property and is differentially 2-valued on $A$ (equivalently, $F|_A$ is strongly plateaued and crooked of codimension $k$ for some $k$).
However, we have only defined differentially 2-valued $(n,m)$-functions and not differentially 2-valued restrictions of $(n,m)$-functions.
So, for an $(n,m)$-function $F$ and an affine subspace $A$ with underlying vector space $L$, we say that $F$ is \textit{differentially 2-valued on $A$} if the function $\delta_{F|_A} \colon \F_2^n \times \F_2^m \to \Z_{\geq0}$ defined by 
    \[
    \delta_{F|_A}(a,b) = \begin{cases}
        |\set{x \in A : F(x)+F(x+a)=b}| & a \in L, \\
        0 & a \notin L,
    \end{cases}
    \]
for all $(a,b) \in \F_2^n \times \F_2^m$, takes at most two distinct values on $(\F_2^n \setminus \set{0}) \times \F_2^m$.
Note that it is a straightforward consequence of \cref{eq:set-conv-equality} that $\delta_{F|_A} = 1_S \otimes 1_S$ where $S$ is the graph of $F|_A$.
We say that $F$ is \textit{differentially $\set{0,\Delta}$-valued on $A$} if for any nonzero $a \in \F_2^n$ (or equivalently any nonzero $a \in L$) and any $b \in \F_2^m$, we have $\delta_{F|_A}(a,b) \in \set{0,\Delta}$.
Since $\delta_{F|_A} = 1_S \otimes 1_S$ where $S = \graph{F|_A}$, it is clear that $F$ is differentially $\set{0,\Delta}$-valued on $A$ if and only if $|S \cap ((a,b) + S)| \in \set{0, \Delta}$ for all $(a,b) \in (\F_2^n \setminus \set{0}) \times \F_2^m$.

\begin{lemma}\label{lem:ortho-cpts-Walsh2}
    Let $A \subseteq\F_2^n$ be an affine subspace, and let $F$ be an $(n,m)$-function that has the crooked property on $A$ and is differentially $\set{0,\Delta}$-valued on $A$.
    Let $S = \graph{F|_A}$.
    For any $u \in \F_2^n$ and $t \in A$, let $K_{u,t} =(t+\set{0,u}^\perp) \times \F_2^m$.
    If $v \in \F_2^m$ is nonzero and $t \in A$, then 
    $W_{\pi_{F|_A}^v}(u)$ equals
    \begin{align*}
     \frac{4}{\Delta}(1_S \otimes 1_S \otimes 1_{S \cap K_{u,t}})(t,F(t)+v) -\frac{1}{\Delta \cdot 2^{n+m-1}} \widehat{(\widehat{1_S})^3}(t,F(t)+v)  + 2 - |A|1_{L^\perp}(u)
    \end{align*}
    for any $u \in \F_2^n$.
    In particular, $W_{\pi_{F|_A}^v}(0)=\frac{1}{\Delta \cdot 2^{n+m-1}} \widehat{(\widehat{1_S})^3}(t,F(t)+v) + 2 - |A|$.
\end{lemma}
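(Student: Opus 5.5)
The plan is to start from \Cref{lem:ortho-cpts-Walsh1}. Since $|A|=|L|$, it gives
\[
W_{\pi_{F|_A}^v}(u)=2\,\widehat{1_{\Lambda_v}}(u)+2-|A|1_{L^\perp}(u),
\]
so it suffices to show
\[
2\,\widehat{1_{\Lambda_v}}(u)=\frac{4}{\Delta}(1_S\otimes 1_S\otimes 1_{S\cap K_{u,t}})(t,F(t)+v)-\frac{1}{\Delta\cdot 2^{n+m-1}}\widehat{(\widehat{1_S})^3}(t,F(t)+v).
\]
The idea is to count $\Lambda_v$, weighted by $\Delta$, through triples of points of $S$ that sum to $(t,F(t)+v)$.

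\textbf{Step 1 (pointwise count of $\Lambda_v$).} Fix $t\in A$ and a nonzero $a\in L$. Then $t+a\in A$, and $\im(D_aF|_A)=F(t)+F(t+a)+U_{a,A}$. Hence $v\in U_{a,A}$ if and only if $F(t)+F(t+a)+v\in\im(D_aF|_A)$, that is, if and only if some $y\in A$ satisfies $F(y)+F(y+a)+F(t+a)=F(t)+v$. Because $F$ is differentially $\set{0,\Delta}$-valued on $A$, the number $N(a)$ of such $y$ is either $0$ or $\Delta$. So $N(a)=\Delta\,1_{\Lambda_v}(a)$. For $a=0$ the equation reads $F(t)=F(t)+v$, which is impossible since $v\neq0$; thus $N(0)=0=\Delta 1_{\Lambda_v}(0)$, and the identity holds on all of $L$.

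\textbf{Step 2 (bijection with triples).} Consider triples $(x_1,x_2,x_3)\in S^3$ with $x_1+x_2+x_3=(t,F(t)+v)$. Write the first coordinates as $y,z,s\in A$ and set $a=s+t\in L$. The first-coordinate condition forces $z=y+a$, and the second coordinate is then exactly the equation of Step 1. Conversely, each pair $(a,y)$ with $y$ counted in $N(a)$ gives such a triple. Therefore
\[
\sum_{a\in L}N(a)=(1_S\otimes1_S\otimes1_S)(t,F(t)+v)=\frac{1}{2^{n+m}}\widehat{(\widehat{1_S})^3}(t,F(t)+v),
\]
using \cref{eq:set-conv-equality} and the convolution theorem. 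Moreover, the constraint $a\in\set{0,u}^\perp$ is equivalent to $x_3\in K_{u,t}$. Hence
\[
\sum_{a\in L\cap\set{0,u}^\perp}N(a)=(1_S\otimes1_S\otimes1_{S\cap K_{u,t}})(t,F(t)+v).
\]

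\textbf{Step 3 (Fourier sum).} Write $(-1)^{u\cdot a}=2\cdot1_{\set{0,u}^\perp}(a)-1$. This gives
\[
\Delta\,\widehat{1_{\Lambda_v}}(u)=2\sum_{a\in L\cap\set{0,u}^\perp}N(a)-\sum_{a\in L}N(a).
\]
Multiplying by $2/\Delta$ and substituting the two expressions from Step 2 yields exactly the claimed formula. For $u=0$ we have $K_{0,t}=A\times\F_2^m\supseteq S$, so the first convolution equals the full triple convolution, and $4-2=2$ gives the stated special case.

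The main point requiring care is the bijection in Step 2. One must check that the first coordinates force $z=y+a$ with $a$ ranging over all of $L$, including $a=0$, and that the $a=0$ term contributes nothing on either side. It does not: on the $\Lambda_v$ side because $0\notin\Lambda_v$, and on the triple side because $v\neq0$. The rest is bookkeeping with \cref{eq:set-conv-equality}.
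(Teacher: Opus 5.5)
Your proposal is correct and follows essentially the same route as the paper: start from \Cref{lem:ortho-cpts-Walsh1}, use the $\set{0,\Delta}$-valuedness to identify $\Delta\, 1_{\Lambda_v}(a)$ with $\delta_{F|_A}(a,F(t)+F(t+a)+v)$ (with the $a=0$ term vanishing since $v\neq 0$), express the full and the $\set{0,u}^\perp$-restricted sums as triple convolutions, and split $(-1)^{u\cdot a}=2\cdot 1_{\set{0,u}^\perp}(a)-1$. The only blemish is cosmetic: $K_{0,t}=\F_2^n\times\F_2^m$ rather than $A\times\F_2^m$, but since $S\subseteq A\times\F_2^m$ you get $S\cap K_{0,t}=S$ either way.
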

\begin{proof}
For any nonzero $v \in \F_2^m$, let $\Lambda_v = \set{a \in L \setminus \set{0} : v \in U_{a,A}}$.
Note that $U_{a,A} = \im(D_aF|_A) + D_aF(t)$ for any $t \in A$.
Therefore, $a \in \Lambda_v$ if and only if $a \in L \setminus \set{0}$ and for any $t \in A$, there exists $b_t \in A \setminus \set{t}$ satisfying $v=F(t)+F(a+t)+F(b_t)+F(a+b_t)$.
That is, $a \in \Lambda_v$ if and only if $\delta_{F|_A}(a,F(t)+F(a+t)+v)>0$ for any $t \in A$.

Fix $t \in A$.
Then $2\sum_{a \in \Lambda_v} (-1)^{u \cdot a} = \frac{2}{\Delta} \sum_{a \in L}(-1)^{u \cdot a}\delta_{F|_A}(a, F(t)+F(t+a)+v)$ because $F$ is differentially $\set{0,\Delta}$-valued on $A$. 
Applying \Cref{lem:ortho-cpts-Walsh1}, we then know
\[
W_{\pi_{F|_A}^v}(u) = \frac{2}{\Delta} \sum_{a \in L}(-1)^{u \cdot a}\delta_{F|_A}(a, F(t)+F(t+a)+v) + 2 -|L|1_{L^\perp}(u)
\]
for all $u \in \F_2^n$.
Since $\delta_{F|_A} = 1_S \otimes 1_S$, we have 
\begin{align*}
\sum_{a \in L}  \delta_{F|_A}(a, F(t)+F(t+a)+v) 
&= \sum_{(x,F(x)) \in S} \delta_{F|_A}(x+t,F(t)+F(x)+v)\\
&= (1_S \otimes 1_S \otimes 1_S)(t,F(t)+v).
\end{align*}
By the convolution theorem, we have that $1_S \otimes 1_S \otimes 1_S = \frac{1}{2^{n+m}} \widehat{(\widehat{1_S})^3}$.
Also, we have that the sum $\sum_{a \in L}(-1)^{u \cdot a}\delta_{F|_A}(a, F(t)+F(t+a)+v)$ is equal to 
\begin{align*}
    &2\sum_{a \in L \cap \set{0,u}^\perp} \delta_{F|_A}(a, F(t)+F(t+a)+v) - \sum_{a \in L}  \delta_{F|_A}(a, F(t)+F(t+a)+v) \\
    &= 2 \sum_{a \in L \cap \set{0,u}^\perp} (1_S \otimes 1_S)(a, F(t)+F(t+a)+v)  -\frac{1}{2^{n+m}} \widehat{(\widehat{1_S})^3}(t,F(t)+v) \\
    &= 2 \sum_{x \in A \cap (t+\set{0,u}^\perp)} (1_S \otimes 1_S)(x+t, F(t)+F(x)+v)  - \frac{1}{2^{n+m}} \widehat{(\widehat{1_S})^3}(t,F(t)+v).
\end{align*}
By the definition of the convolutional product, this is equal to 
\[
2(1_S \otimes 1_S \otimes 1_{S \cap K_{u,t}})(t,F(t)+v) -\frac{1}{2^{n+m}} \widehat{(\widehat{1_S})^3}(t,F(t)+v),
\]
and the first claim follows.
For the second claim, by considering the case that $u=0$, we have $K_{u,t}=K_{0,t}=\F_2^n \times \F_2^m$, so
\begin{align*}
W_{\pi_{F|_A}^v}(0)
&= \frac{4}{\Delta \cdot 2^{n+m}} \widehat{(\widehat{1_S})^3}(t,F(t)+v) -
\frac{1}{\Delta \cdot 2^{n+m-1}} \widehat{(\widehat{1_S})^3}(t,F(t)+v)  + 2 - |A| \\
&= \frac{1}{\Delta \cdot 2^{n+m-1}} \widehat{(\widehat{1_S})^3}(t,F(t)+v) + 2 - |A|.
\end{align*}
\end{proof}

Under the same notation and assumptions as \Cref{lem:ortho-cpts-Walsh2}, in the case that $S = \graph{F|_A}$ is Sidon, we have that $\Delta = 2$ and $\widehat{(\widehat{1_S})^3}(t,F(t)+v) = 6 \cdot 2^{n+m} \mult_S(t,F(t)+v)$ for any $t \in A$ and nonzero $v \in \F_2^m$. 
In particular, if $F \colon \F_2^n \to \F_2^m$ has the crooked property on an affine subspace $A$ and is differentially $\set{0,2}$-valued on $A$, then it follows that 
\begin{equation}\label{eq:ortho-weight-mult}
    \mult_S(t,F(t)+v) = \frac{|A|-1-\wt(\pi_{F|_A}^v)}{3}
\end{equation}
for any $t \in A$ and $v \in \F_2^m \setminus \set{0}$ because $W_{\pi_{F|_A}^v}(0) = |A| - 2\wt(\pi_{F|_A}^v)$.
Relation~(\ref{eq:ortho-weight-mult}) generalizes \cite[Proposition 7]{CouvreurOrtho-Derivative} and \cite[Equation (10)]{MihailaThornburgh2026}. 
This also provides a proof of the already known fact (see \cite[Proposition 2]{Kyureghyan2007}) that the ortho-derivatives of crooked functions in an odd number of variables are invertible.
\begin{corollary}\label{cor:crooked-AB-invertibleOrtho}
    Assume $n$ is odd, and let $F \colon \F_2^n \to \F_2^n$ be a crooked function.
    Then $\pi_F$ is invertible.
\end{corollary}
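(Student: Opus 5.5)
We prove that $\pi_F$ is injective: equivalently, for every nonzero $v$ the component $v\cdot\pi_F$ is balanced. From $\pi_F(0)=0$ and $\pi_F(a)\neq 0$ for $a\neq0$, balancedness of all nonzero components shows that $\pi_F$ takes every value exactly once, i.e.\ $\pi_F$ is a bijection. To check balancedness we use \cref{eq:ortho-weight-mult} with $A=L=\F_2^n$ and $m=n$. Its hypotheses hold: $F$ is crooked, hence APN, so $\graph F$ is Sidon; every nonzero derivative is $2$-to-$1$ onto an affine hyperplane, so $F$ has the crooked property and is differentially $\set{0,2}$-valued on $\F_2^n$. Also $\pi_{F|_{\F_2^n}}^v = v\cdot\pi_F$. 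Hence for every $t$ and every nonzero $v$,
\[
\mult_{\graph F}(t,F(t)+v) = \frac{2^n-1-\wt(v\cdot \pi_F)}{3}.
\]

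Next we compute the exclude multiplicity. This is the step that needs input beyond the identity: we must show that $\wt(v\cdot\pi_F)=2^{n-1}$, i.e.\ that $\mult_{\graph F}(t,F(t)+v)=(2^{n-1}-1)/3$ for all nonzero $v$. For $n$ odd, $2^{n-1}-1$ is divisible by $3$, so this value is consistent with the formula.

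Since $F$ is crooked, all of its components are partially-bent, so $F$ is plateaued. For $n$ odd, a plateaued APN function is AB. This follows from \Cref{rem:amplitude-sums-Sidon-iff} with $A=\F_2^n$, which gives $\sum_v\lambda_v^2=2^n(3\cdot 2^n-2)$, combined with Parseval and the fact that for $n$ odd each amplitude is $2^{(n+s)/2}$ with $s$ odd and $s\ge1$. Concretely, the $v=0$ term contributes $2^{2n}$, so $\sum_{v\ne0}\lambda_v^2=2^{2n+1}-2^{n+1}=(2^n-1)2^{n+1}$. Because each $\lambda_v^2\ge 2^{n+1}$ for $v\neq0$, equality forces $\lambda_v^2=2^{n+1}$ for every nonzero $v$.

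Now apply the computation in \Cref{rem:plateaued-restriction-mults} with $A=\F_2^n$. For $c\neq0$,
\[
\mult(t,F(t)+c)=\frac{1}{6\cdot 2^n}\Bigl(2^{2n}+2^{n+1}\sum_{v\ne0}(-1)^{v\cdot c}\Bigr)=\frac{2^{2n}-2^{n+1}}{6\cdot 2^n}=\frac{2^{n-1}-1}{3}.
\]
Substituting into the displayed identity gives $\wt(v\cdot\pi_F)=2^{n-1}$ for all nonzero $v$. Thus every nonzero component of $\pi_F$ is balanced, and $\pi_F$ is invertible by the opening reduction.

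The main point needing care is the AB step: it uses the standard fact that a plateaued Boolean function in an odd number of variables has amplitude $2^{(n+s)/2}$ with $s$ odd, so that $s\ge1$. The remaining steps are direct substitution.
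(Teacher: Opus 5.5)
Your proof is correct and follows the paper's route: crooked functions in odd dimension are AB, so the exclude multiplicity equals $\frac{2^{n-1}-1}{3}$, and \cref{eq:ortho-weight-mult} then forces every nonzero component of $\pi_F$ to be balanced. The only difference is that the paper cites the two intermediate facts (plateaued APN in odd dimension implies AB, and the multiplicity characterization of AB functions), whereas you derive them directly from \Cref{rem:amplitude-sums-Sidon-iff} and \Cref{rem:plateaued-restriction-mults}, which makes your argument self-contained within the paper.
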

\begin{proof}
    Since $n$ is odd and $F$ is plateaued, we know that $F$ is AB \cite[Proposition 163]{CarletBook}.
    A well-known characterization of AB functions is that they are those APN functions such that $\mult_{\graph F}(t,F(t)+v)=\frac{2^{n-1}-1}{3}$ for all $t,v \in \F_2^n$ where $v \neq0$, see \cite{vandamflass}.
    Applying \cref{eq:ortho-weight-mult}, we see that $v \cdot \pi_F$ is balanced (that is, has Hamming weight $2^{n-1}$) for all $v \neq 0$, i.e. $\pi_F$ is invertible.    
\end{proof}

We will soon apply \Cref{lem:ortho-cpts-Walsh2} to crooked functions, but we first require the following lemma in order to more easily understand the convolutional product of \Cref{lem:ortho-cpts-Walsh2}.

\begin{lemma}\label{lem:1S1S1_SandH_otimes}
    Let $S \subseteq \F_2^d$ be a Sidon set and let $H \subseteq \F_2^d$ be an affine hyperplane.
    For any $a \in \F_2^d$, we have
    \[
    (1_S \otimes 1_S \otimes 1_{S \cap H})(a)  
    = 
    \begin{cases}
        2\mult_S(a) + 4\mult_{S \cap H}(a) & a \in H \setminus S, \\
        |S| + 2(|S \cap H|-1) & a \in H \cap S, \\
        4\mult_S(a) - 4\mult_{S \cap H^c}(a) & a \in H^c \setminus S, \\
        2|S\cap H| & a \in H^c \cap S.
    \end{cases}
    \]
\end{lemma}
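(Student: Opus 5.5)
The plan is to compute $(1_S \otimes 1_S \otimes 1_{S\cap H})(a)$ directly from \cref{eq:set-conv-equality}. By that identity it equals the number of ordered triples $(x,y,z) \in S \times S \times (S \cap H)$ with $x+y+z=a$, and we count these triples.

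First, a parity fact. Write $H = h + H_0$, where $H_0$ is the underlying linear hyperplane, so that $H^c = h+e+H_0$ for any $e \notin H_0$. The sum of three points lies in $H$ exactly when an even number (zero or two) of them lie in $H^c$; it lies in $H^c$ exactly when an odd number (one or three) of them lie in $H^c$. This is the only structural input about $H$.

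\emph{Case $a \notin S$.} No triple can have a repeated entry: if $x=y$ then $z=a$, forcing $a\in S$, and similarly for the other coincidences. So the ordered triples summing to $a$ are exactly the $6\,\mult_S(a)$ orderings of the $3$-subsets $\{x,y,z\}\subseteq S$ with sum $a$. We then count how many orderings of each such set put the last coordinate in $H$.
\begin{itemize}
\item[(i)] If $a \in H$, each set has $0$ or $2$ elements in $H^c$, i.e.\ all three or exactly one in $H$.
\begin{itemize}
\item[$\bullet$] An all-in-$H$ set contributes $6$; these sets are exactly the ones counted by $\mult_{S\cap H}(a)$.
\item[$\bullet$] A one-in-$H$ set contributes $2$, and there are $\mult_S(a)-\mult_{S\cap H}(a)$ of them.
\end{itemize}
The total is $6\mult_{S\cap H}(a) + 2(\mult_S(a)-\mult_{S\cap H}(a)) = 2\mult_S(a)+4\mult_{S\cap H}(a)$.
\item[(ii)] If $a\in H^c$, each set has $1$ or $3$ elements in $H^c$.
\begin{itemize}
\item[$\bullet$] An all-in-$H^c$ set contributes $0$; there are $\mult_{S\cap H^c}(a)$ of them.
\item[$\bullet$] A set with two elements in $H$ contributes $4$.
\end{itemize}
The total is $4(\mult_S(a)-\mult_{S\cap H^c}(a))$.
\end{itemize}

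\emph{Case $a\in S$.} As recalled earlier in the paper, the Sidon property forces the triples summing to $a$ to be exactly $(a,a,a)$ and the three orderings $(b,b,a),(b,a,b),(a,b,b)$ for each $b\in S\setminus\{a\}$. We keep only those whose last coordinate lies in $H$.
\begin{itemize}
\item[(i)] If $a\in H$:
\begin{itemize}
\item[$\bullet$] $(a,a,a)$ gives $1$.
\item[$\bullet$] $(b,b,a)$ gives $|S|-1$.
\item[$\bullet$] $(b,a,b)$ and $(a,b,b)$ require $b\in (S\cap H)\setminus\{a\}$, giving $2(|S\cap H|-1)$.
\end{itemize}
The total is $|S|+2(|S\cap H|-1)$.
\item[(ii)] If $a\in H^c$, only $(b,a,b)$ and $(a,b,b)$ with $b\in S\cap H$ survive. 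Here $b\neq a$ automatically, since $b\in H$ and $a\in H^c$. The total is $2|S\cap H|$.
\end{itemize}

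The step needing the most care is the $a\notin S$ case. One must verify that restricting to $H$ (resp.\ $H^c$) sends $3$-subsets of $S$ summing to $a$ bijectively onto those counted by $\mult_{S\cap H}(a)$ (resp.\ $\mult_{S\cap H^c}(a)$). This holds because $S\cap H$ and $S\cap H^c$ are again Sidon, and $a$ lies outside them since $a\notin S$. One must also check that the parity constraint excludes every other distribution of the three points between $H$ and $H^c$.
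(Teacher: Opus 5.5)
Your proof is correct and follows the paper's approach: count the ordered triples in $S\times S\times(S\cap H)$ summing to $a$, using that a sum of three points lies in $H$ exactly when an even number of them lie in $H^c$, together with the Sidon description of the triples summing to a point of $S$. The only difference is in the two $H^c$ cases. There the paper does not count directly; it applies the first two cases to the hyperplane $H^c$ and subtracts the result from $(1_S\otimes 1_S\otimes 1_S)(a)$, which equals $6\mult_S(a)$ or $3|S|-2$. Your direct count (each $3$-set contributes $0$ or $4$ orderings, and in the $a\in S\cap H^c$ case only the orderings $(b,a,b),(a,b,b)$ with $b\in S\cap H$ survive) reaches the same values just as quickly.
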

\begin{proof}
    Let $a \in \F_2^d$.
    Let $T_a = \set{(x,y,z) \in S^2 \times (S \cap H) : x+y+z=a}$.
    By \cref{eq:set-conv-equality}, we know that $(1_S \otimes 1_S \otimes 1_{S \cap H})(a)=|T_a|$.
    
    Let us first consider the case that $a \in H \setminus S$.
    If $(u,v,w) \in S^3$ such that $u+v+w=a$ (note that $u,v,w$ are necessarily distinct), then exactly one or three points of $(u,v,w)$ are in $H$, and so some permutation of $(u,v,w)$ is in $T_a$.
    So, $\mult_S(a) - \mult_{S \cap H}(a)$ is equal to the number of 3-sets $\set{u,v,w} \subseteq S$ intersecting $H$ in exactly one point and $u+v+w=a$.
    Since  
    \[
        T_a = \set{(x,y,z) \in (S \cap H)^3 : x+y+z=a} \cup \set{(x,y,z) \in (S \cap H^c)^2 \times (S \cap H) : x+y+z=a},
    \]
    and the first set on the right-hand-side has size $6\mult_{S 
    \cap H}(a)$, while the second has size $2(\mult_S(a) -\mult_{S \cap H}(a))$, we deduce that $|T_a| = 2\mult_S(a) + 4\mult_{S \cap H}(a)$.

    Now, assume $a \in S\cap H$.
    Then if $(u,v,w) \in S^3$ with $u+v+w=a$, then $u,v,$ and $w$ are not all distinct since $S$ is Sidon.
    So, for $(x,y,z) \in T_a$, there is at least one repeated element, and in particular, at least one of $x,y,z$ is equal to $a$.
    If $x=y$, then $z =a$, and there are $|S|$ such triples.
    If $x=z$, then $y=a$, and there are $| S \cap H|$ such triples.
    Similarly, if $y=z$, then $x=a$, and there are $|S \cap H|$ such triples.
    These three cases only intersect at $(x,y,z) = (a,a,a)$, and so $|T_a| = |S| + 2|S \cap H|-2$.

    Now, observe that by \cref{eq:set-conv-equality}, we have 
    \[
    (1_S \otimes 1_S \otimes 1_S)(a)=(1_S \otimes 1_S \otimes 1_{S \cap H})(a) +  (1_S \otimes 1_S \otimes 1_{S \cap H^c})(a).
    \]
    Recall from \Cref{prelim} that $(1_S \otimes 1_S \otimes 1_S)(a)$ equals $6\mult_S(a)$ if $a \notin S$, and $3|S|-2$ otherwise.
    The result then follows by applying the first two cases of this proof to $(1_S \otimes 1_S \otimes 1_{S \cap H^c})(a)$ when $a \in H^c \setminus S$ and $a \in H^c \cap S$.
    Indeed, if $a \in H^c \setminus S$, then 
    \[
    |T_a| = 6\mult_S(a) - (2\mult_S(a) + 4\mult_{S \cap H^c}(a)) = 4\mult_S(a) - 4\mult_{S \cap H^c}(a),
    \]
    and if $a \in H^c \cap S$, then 
    \[
    |T_a| = 3|S|-2 - (|S| + 2(|S \cap H^c|-1)) = 2|S| -2|S \cap H^c|=2|S \cap H|.
    \]
\end{proof}

We then have the following combinatorial description of the Walsh transform of the ortho-derivative of a crooked function.
\begin{theorem}\label{thm:crooked-function-ortho-Walsh}
    Let $F \colon \F_2^n \to \F_2^n$ be a crooked function, and let $u,v \in \F_2^n$ such that $v \neq 0$.
    Let $t \in \F_2^n$, let $H_{u,t} = t + \set{0,u}^\perp$, and let $F' = F|_{H_{u,t}}$.
    Then
    \[
    W_{\pi_F}(u,v) 
    = 
    \begin{cases}
       6\mult_{\graph F}(t,F(t)+v) +2 - 2^n& u=0 \\
       8\mult_{\graph{F'}}(t,F(t)+v) -2\mult_{\graph F}(t,F(t)+v) + 2 & u \neq 0.
    \end{cases}
    \]
\end{theorem}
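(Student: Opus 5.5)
We specialize \Cref{lem:ortho-cpts-Walsh2} to $A=\F_2^n$ (so $L=\F_2^n$, $m=n$), where $F$ is crooked. A crooked function is APN and has the crooked property, so it is differentially $\set{0,2}$-valued and $S=\graph F$ is a Sidon set in $\F_2^{2n}$. Here $\pi^v_{F|_A}=v\cdot\pi_F$, so $W_{\pi_F}(u,v)=W_{\pi^v_{F}}(u)$. With $\Delta=2$, \cref{eq:multS-cubesFHT} gives $\widehat{(\widehat{1_S})^3}(t,F(t)+v)=6\cdot 2^{2n}\mult_S(t,F(t)+v)$. This applies because $(t,F(t)+v)\notin S$ when $v\neq 0$.

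For $u=0$ we use the last formula of \Cref{lem:ortho-cpts-Walsh2}:
\[
W_{\pi_F}(0,v)=\frac{6\cdot 2^{2n}}{2\cdot 2^{2n-1}}\mult_S(t,F(t)+v)+2-2^n=6\mult_S(t,F(t)+v)+2-2^n,
\]
which is the first case.

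For $u\neq 0$, the term $|A|1_{L^\perp}(u)$ vanishes since $L^\perp=\set{0}$. The general formula then reads
\[
W_{\pi_F}(u,v)=2(1_S\otimes1_S\otimes1_{S\cap K_{u,t}})(t,F(t)+v)-6\mult_S(t,F(t)+v)+2 .
\]
Here $K_{u,t}=H_{u,t}\times\F_2^n$ is a linear-translate affine hyperplane of $\F_2^{2n}$, and $S\cap K_{u,t}=\graph{F'}$. The point $a=(t,F(t)+v)$ lies in $K_{u,t}$, because its first coordinate is $t\in H_{u,t}$, and it is not in $S$. So the first case of \Cref{lem:1S1S1_SandH_otimes} applies:
\[
(1_S\otimes1_S\otimes1_{S\cap K_{u,t}})(a)=2\mult_S(a)+4\mult_{\graph{F'}}(a).
\]
Substituting gives $4\mult_S(a)+8\mult_{\graph{F'}}(a)-6\mult_S(a)+2$, that is, $8\mult_{\graph{F'}}(t,F(t)+v)-2\mult_{\graph F}(t,F(t)+v)+2$.

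The whole argument is bookkeeping of normalizing constants. The only points to check carefully are that $K_{u,t}$ is indeed an affine hyperplane when $u\neq0$, so that \Cref{lem:1S1S1_SandH_otimes} applies, and that $a$ lies in $H\setminus S$ rather than in one of the other three cases of that lemma.
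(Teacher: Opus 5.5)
Your proposal is correct and follows essentially the same route as the paper's proof. You specialize \Cref{lem:ortho-cpts-Walsh2} to $A=\F_2^n$ with $\Delta=2$, and then apply the first case of \Cref{lem:1S1S1_SandH_otimes} using $(t,F(t)+v)\in (H_{u,t}\times\F_2^n)\setminus\graph F$; your bookkeeping of the normalizing constants matches.
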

\begin{proof}
    Applying \Cref{lem:ortho-cpts-Walsh2} with $A = \F_2^n$ and $\Delta = 2$, we have $W_{\pi_F}(0,v) = 6 \mult_{\graph F}(t,F(t)+v) + 2 - 2^n$ and
    \[
    W_{\pi_F}(u,v) 
    =
    2(1_{\graph F} \otimes 1_{\graph F} \otimes 1_{\graph{F'}})(t,F(t)+v) - 6\mult_{\graph F}(t,F(t)+v)+2.
    \]
    By \Cref{lem:1S1S1_SandH_otimes}, we have $(1_{\graph F} \otimes 1_{\graph F} \otimes 1_{\graph{F'}})(t,F(t)+v) = 2\mult_{\graph F}(t,F(t)+v) + 4 \mult_{\graph{F'}}(t,F(t)+v)$ because $(t,F(t)+v) \in H_{u,t} \times \F_2^n$.
    The statement immediately follows.
\end{proof}

The following theorem is a generalization of \Cref{thm:crooked-function-ortho-Walsh} along with a recursive relation.
\begin{theorem}\label{thm:ortho-Walsh-multiplicity-description}
    Let $\ell$ and $k$ be integers such that $1 \leq \ell \leq k$.
    Let $L_0 \supseteq \cdots \supseteq L_\ell$ be a sequence of nested linear subspaces of $\F_2^n$ such that $\dim(L_i) = k-i$ for all $0 \leq i \leq \ell$.
    Let $u_1, \dots, u_\ell \in \F_2^n$ such that $L_i = L_{i-1} \cap \set{0,u_i}^\perp$.
    Assume that $F$ is an $(n,m)$-function such that if $0 \leq i \leq \ell-1$, then $F$ has the crooked property and is differentially 2-uniform on $L_i$.
    Let $S_i = \graph{F} \cap (L_i \times \F_2^m)$, and let $k_{i,v} =\mult_{S_i}(0,F(0)+v)$ for some fixed nonzero $v \in \F_2^m$.
    If $1 \leq i \leq \ell$, then
    \begin{equation}\label{eq:ortho-Walsh-mult-recursive}
    W_{\pi_{i-1}^v}(u_i) = 8k_{i,v} - 2k_{i-1,v} +2
    \end{equation}
    and 
    \begin{equation}\label{eq:multSi-Walshcoeffs-soln}
    2(2^{2i} k_{i,v} - k_{0,v}) = \sum_{j=0}^{i-1} 2^{2j} (W_{\pi_j^v}(u_{j+1})-2),
    \end{equation}
    where $\pi_j^v := \pi_{F|_{L_j}}^v$ is defined as in \Cref{def:generalized-ortho} for all $0 \leq j \leq \ell-1$.
\end{theorem}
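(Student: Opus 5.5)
The plan is to derive \eqref{eq:ortho-Walsh-mult-recursive} by specializing \Cref{lem:ortho-cpts-Walsh2} and \Cref{lem:1S1S1_SandH_otimes} to the subspace $A = L_{i-1}$, exactly as in the proof of \Cref{thm:crooked-function-ortho-Walsh}. Then \eqref{eq:multSi-Walshcoeffs-soln} follows by telescoping.

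\textbf{Step 1: setting up the hypotheses on $L_{i-1}$.} Fix $1 \leq i \leq \ell$ and take $A = L_{i-1}$, $t = 0$, and $S = S_{i-1} = \graph{F|_{L_{i-1}}}$, viewed in $\F_2^n \times \F_2^m$. Since $F$ is differentially 2-uniform on $L_{i-1}$ and has the crooked property there, $S$ is a Sidon set. Moreover $F$ is differentially $\set{0,2}$-valued on $L_{i-1}$. The reason is that on an affine space $\im(D_aF|_A)$, with $D_a$ pairing $x$ and $x+a$, every attained value has multiplicity at least $2$, hence exactly $2$. So $\Delta = 2$ in \Cref{lem:ortho-cpts-Walsh2}.

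\textbf{Step 2: the case $u_i \notin L_{i-1}^\perp$.} Then $L_i$ is a hyperplane of $L_{i-1}$. Here $1_{L_{i-1}^\perp}(u_i) = 0$, and $S \cap K_{u_i,0} = S_i$. \Cref{lem:ortho-cpts-Walsh2} gives
\[
W_{\pi_{i-1}^v}(u_i) = 2(1_S\otimes 1_S\otimes 1_{S_i})(0,F(0)+v) - \frac{1}{2^{n+m}}\widehat{(\widehat{1_S})^3}(0,F(0)+v) + 2 .
\]
The middle term equals $6k_{i-1,v}$ by \eqref{eq:multS-cubesFHT}, since $(0,F(0)+v)\notin S$ because $v\neq 0$. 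For the convolution I will apply \Cref{lem:1S1S1_SandH_otimes} inside the ambient space $L_{i-1}\times\F_2^m$, where $S_i = S \cap H$ with $H = L_i \times \F_2^m$ a hyperplane. One should check that the convolution over $\F_2^{n+m}$ agrees with the one over the subspace, which holds because all summands live in the subspace. The point $(0,F(0)+v)$ lies in $H \setminus S$, so the convolution equals $2k_{i-1,v} + 4k_{i,v}$. Hence $W = 4k_{i-1,v} + 8k_{i,v} - 6k_{i-1,v} + 2 = 8k_{i,v} - 2k_{i-1,v} + 2$.

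\textbf{Step 3: the degenerate case $u_i \in L_{i-1}^\perp$.} This is the main obstacle. It contradicts $\dim L_i = k-i$, because then $L_i = L_{i-1}$. So the dimension hypothesis excludes it and no separate computation is needed. I will note this explicitly.

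\textbf{Step 4: telescoping.} Multiply \eqref{eq:ortho-Walsh-mult-recursive} for index $j+1$ by $2^{2j}$. This gives $2^{2j}(W_{\pi_j^v}(u_{j+1}) - 2) = 2(2^{2(j+1)}k_{j+1,v} - 2^{2j}k_{j,v})$. Summing over $j = 0,\dots,i-1$ telescopes to $2(2^{2i}k_{i,v} - k_{0,v})$, which is \eqref{eq:multSi-Walshcoeffs-soln}.
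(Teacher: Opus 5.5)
Your proposal is correct and follows the paper's proof. You specialize \Cref{lem:ortho-cpts-Walsh2} with $A=L_{i-1}$, $t=0$ and $\Delta=2$, then evaluate the triple convolution at $(0,F(0)+v)\in H\setminus S_{i-1}$ via \Cref{lem:1S1S1_SandH_otimes}; the paper uses the hyperplane $\set{0,u_i}^\perp\times\F_2^m$ of the full space instead of $L_i\times\F_2^m$ inside $L_{i-1}\times\F_2^m$, with the same result, and your telescoping replaces its induction while your explicit checks that $\Delta=2$ and that $1_{L_{i-1}^\perp}(u_i)=0$ follows from the dimension hypothesis make precise what the paper uses silently.
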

\begin{proof}
    Let $i$ be such that $1 \leq i \leq \ell$.
    Let $H_{u_i}=\set{0,u_i}^\perp \times \F_2^m$.
    By \Cref{lem:ortho-cpts-Walsh2}, we have that 
    \[
    W_{\pi_{i-1}^v}(u_i) = 2(1_{S_{i-1}} \otimes 1_{S_{i-1}} \otimes 1_{S_{i-1} \cap H_{u_i}})(0,F(0)+v) -6k_{i-1,v} + 2.
    \]
    Clearly, $(0,F(0)+v) \in (\set{0,u_i}^\perp \times \F_2^m) \setminus S_{i-1}$.
    Combining this with $S_i = S_{i-1} \cap H_{u_i}$, we then have by \Cref{lem:1S1S1_SandH_otimes} that $(1_{S_{i-1}} \otimes 1_{S_{i-1}} \otimes 1_{S_{i-1} \cap H_{u_i}})(0,F(0)+v) = 2k_{i-1,v}+4k_{i,v}$.
    Hence, $W_{\pi_{i-1}^v}(u_i)= 8k_{i,v} - 2k_{i-1,v}+2$, proving \cref{eq:ortho-Walsh-mult-recursive}.
    
    Upon rearrangement of the first statement, we have the recursive relation $k_{i,v} = \frac{k_{i-1,v}}{4} + \frac{1}{8}(W_{\pi_{i-1}^v}(u_i)-2)$.
    We will now prove, by induction, the identity 
    \begin{equation}\label{eq:recursive-rel}
    k_{i,v} = \frac{k_{0,v}}{2^{2i}} + \frac{1}{8} \sum_{j=0}^{i-1} 2^{2(j-i)} (W_{\pi_j^v}(u_{j+1})-2)
    \end{equation}
    for all $1 \leq i \leq \ell$. 
    From the recursive relation above, we have $k_{1,v} = \frac{k_{0,v}}{4}+\frac{1}{8}(W_{\pi_0^v}(u_1)-2)$, proving the base case.
    Now, assume $1 \leq i < \ell$ such that \cref{eq:recursive-rel} holds.
    Then 
    \begin{align*}
    k_{i+1,v} &= 
    \frac{k_{i,v}}{4} + \frac{1}{8}(W_{\pi_i^v}(u_{i+1})-2) \\
    &= \frac{1}{4} \parens{\frac{k_{0,v}}{2^{2i}} + \frac{1}{8} \sum_{j=0}^{i-1} 2^{2(j-i)} (W_{\pi_j^v}(u_{j+1})-2)}+\frac{1}{8}(W_{\pi_i^v}(u_{i+1})-2) \\
    &= \frac{k_{0,v}}{2^{2(i+1)}} + \frac{1}{8} \sum_{j=0}^i 2^{2(j-(i+1))} (W_{\pi_j^v}(u_{j+1})-2),
    \end{align*}
    as desired.    
    Multiplying both sides of \cref{eq:recursive-rel} by $2^{2i+1}$ and rearranging then yields \cref{eq:multSi-Walshcoeffs-soln}.
\end{proof}

We can also characterize when $\pi_{F|_L}^v$ is identically zero (we note that the following proof is almost the same as \cite[Proposition 7.2]{MihailaThornburgh2026}).

\begin{proposition}\label{prop:zerocpt-characterization}
    Let $F$ be an $(n,m)$-function that has the crooked property and is differentially $2$-uniform on a $k$-dimensional affine subspace $A \subseteq \F_2^n$, and let $v \in \F_2^m$ be nonzero.
    Then $\pi_{F|_A}^v$ is identically zero (equivalently $\pi_{F|_A}^v$ is affine) if and only if $k$ is even and for any $t \in A$, there exists a partition $\set{\set{x_i, y_i, z_i} : 1 \leq i \leq \frac{2^k-1}{3}}$ of $A \setminus \set{t}$ such that 
    \[
    (x_i + y_i + z_i, F(x_i)+F(y_i)+F(z_i))=(t,F(t)+v)
    \]
    for all $1 \leq i \leq \frac{2^k-1}{3}$.
    If so, then $\deg_{alg}(F|_A)\geq k$.
\end{proposition}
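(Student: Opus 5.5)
The plan is to translate ``$\pi_{F|_A}^v\equiv 0$'' into a statement about the set $\Lambda_v=\set{a\in L\setminus\set{0} : v\in U_{a,A}}$ from \Cref{lem:ortho-cpts-Walsh1}, and then read that set off combinatorially from triples in $S=\graph{F|_A}$. By definition, $\pi_{F|_A}^v\equiv 0$ if and only if $\Lambda_v=L\setminus\set{0}$. First I will note that $F$ is differentially $\set{0,2}$-valued on $A$. Indeed, \Cref{prop:strongplat-char}, applied to the restriction, makes every $D_aF|_A$ with $a\neq 0$ balanced onto its image, and differential $2$-uniformity then forces each fibre to have size exactly $2$. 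So \cref{eq:ortho-weight-mult} applies and gives $3\,\mult_S(t,F(t)+v)=2^k-1-\wt(\pi_{F|_A}^v)$ for every $t\in A$.

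For the equivalence I will make this bijective. Fix $t\in A$. For each $a\in\Lambda_v$ there is exactly one pair $\set{b,b+a}\subseteq A$, different from $\set{t,t+a}$, with $F(b)+F(b+a)=F(t)+F(t+a)+v$. This gives the triple $\set{t+a,b,b+a}\subseteq A\setminus\set{t}$, whose coordinate sum is $(t,F(t)+v)$. Conversely, each such triple arises from exactly three elements $a$, namely $t$ plus one of its points; Sidonness of $S$ makes the three points distinct and the pair unique. Hence the triples are counted by $\mult_S(t,F(t)+v)=|\Lambda_v|/3$.

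If $\Lambda_v=L\setminus\set{0}$, every $x\in A\setminus\set{t}$ equals $t+a$ for some $a\in\Lambda_v$, so $x$ lies in at least one triple. A counting argument then shows the triples form a partition of $A\setminus\set{t}$: there are $(2^k-1)/3$ triples covering $2^k-1$ points, each point at least once. Integrality forces $3\mid 2^k-1$, so $k$ is even. Conversely, a partition for one $t$ means every $a\neq 0$ lies in $\Lambda_v$, which gives $\pi_{F|_A}^v\equiv 0$.

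For the degree claim, I will sum the partition. Summing the second coordinates over all $(2^k-1)/3$ triples, an odd number, gives $\sum_{x\in A\setminus\set{t}}F(x)=F(t)+v$. Hence $\sum_{x\in A}F(x)=v\neq 0$, and a nonzero sum over a $k$-dimensional affine space forces $\deg_{alg}(F|_A)\ge k$.

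The parenthetical ``equivalently affine'' is the step I expect to be the real obstacle. Since $\pi_{F|_A}^v(0)=0$, affine means linear, so I must rule out $\pi_{F|_A}^v(a)=u_0\cdot a$ with $u_0\notin L^\perp$. In that case $\Lambda_v=H\setminus\set{0}$ for a hyperplane $H$ of $L$, so the triples partition $(t+H)\setminus\set{t}$. By the previous paragraphs applied to $t+H$, this forces $k-1$ even and a nonzero sum of $F$ over $t+H$. My first attempt is to test \cref{eq:ortho-Walsh-mult-recursive} of \Cref{thm:ortho-Walsh-multiplicity-description}, using the values $W_{\pi_{F|_A}^v}(u)\in\set{0,2^k}$, against the other hyperplanes $L\cap\set{0,u}^\perp$. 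The goal is an integrality or parity contradiction for the resulting $\mult$ values on those hyperplanes. I would also try an argument that the points outside $t+H$ in direction $a\notin H$ cannot match any pair. I am not certain which of these closes the gap.
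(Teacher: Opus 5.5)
Your proof of the main equivalence and of $\deg_{alg}(F|_A)\ge k$ is correct and essentially the paper's. The paper gets $\pi_{F|_A}^v\equiv 0 \iff \mult_S(t,F(t)+v)=\frac{2^k-1}{3}$ for every $t$ from \cref{eq:ortho-weight-mult}, and calls the partition statement immediate from Sidonness. Your $3$-to-$1$ map $a\mapsto\set{t+a,b,b+a}$ from $\Lambda_v$ onto the triples is exactly the combinatorial content of that formula, and the degree bound is proved identically. You do not need \Cref{prop:strongplat-char} for the $\set{0,2}$-valuedness. The fibres of $D_aF|_A$ are unions of pairs $\set{x,x+a}$, so differential $2$-uniformity already makes every nonempty fibre have size exactly $2$.

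The step you flag cannot be closed, because the parenthetical ``equivalently affine'' is false as stated. Take $n=m=k=3$, $A=\F_{2^3}$ and $F(x)=x^3$; it is quadratic APN, so the hypotheses hold. By \Cref{rem:cpts-ortho-generalized}, $\pi_F^v(a)=\Tr(va^{-3})=\Tr(va^4)=\Tr(v^2a)$. This is a nonzero linear function for every $v\neq 0$ (for $v=1$ it is $\Tr$ itself). Yet it is not identically zero, since $k$ is odd. The case $k=1$ is also a trivial counterexample: there $U_{a,A}=\set{0}$, so $\pi_{F|_A}^v(a)=1$. This is why your recursion test produces no contradiction. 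For $k=3$ it simply returns $m_{L,u}(v)=0$ for $u\neq u_0$ and $m_{L,u_0}(v)=1$, which is consistent.

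What your counting does prove is the even case. An affine $\pi_{F|_A}^v$ is linear, because $\pi_{F|_A}^v(0)=0$. A nonzero linear one has $|\Lambda_v|=2^{k-1}-1$, and this must be divisible by $3$ since $|\Lambda_v|=3\mult_S(t,F(t)+v)$, which forces $k$ odd. So ``identically zero $\iff$ affine'' holds for even $k$ and fails for odd $k$, at least when $k\in\set{1,3}$.

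The paper gives no argument for the parenthetical and never uses it; \Cref{prop:crooked-orthoderivative-algdegbound} only invokes the ``identically zero'' case. The right fix is therefore to drop the parenthetical or restrict it to even $k$, not to search for a parity or integrality contradiction.
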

\begin{proof}
    Let $S = \graph{F|_A}$.
    By \cref{eq:ortho-weight-mult}, we know that $\pi_{F|_A}^v$ is identically zero if and only if $\mult_S(t,F(t)+v) = \frac{2^k-1}{3}$ for any $t \in A$.
    The claim is then immediate from the definition of a Sidon set and exclude multiplicity.
    For the final claim, observe that the sum $\sum_{x 
    \in A} F(x)$ is equal to $F(t) + \frac{2^k-1}{3}(F(t) + v) = v\neq 0$ if the above conditions hold.
\end{proof}

\section{On the algebraic-degree of the ortho-derivative of a crooked function and a proof of Gorodilova's conjecture}\label{sec:algdeg}

Recall from \Cref{conj:Gorodilova} that Gorodilova conjectured in 2020 that for any $n \geq 4$, any component of the ortho-derivative $\pi_F$ of a quadratic APN $(n,n)$-function $F$ has algebraic degree $n-2$.
In 2024, Couvreur, Canteaut, and Perrin in \cite{CouvreurOrtho-Derivative} showed that when $F$ is quadratic APN, the algebraic degree of $\pi_F$ is at most $n-2$ (that is, any component function of $\pi_F$ can have an algebraic degree at most $n-2$).
In this section, we prove \Cref{conj:Gorodilova} (in fact, we prove a more general result that proves \Cref{conj:Gorodilova} as a corollary), and we generalize the result of Couvreur, Canteaut, and Perrin to all crooked functions using \Cref{thm:crooked-function-ortho-Walsh}.

Recall that for a Sidon set $S \subseteq \F_2^d$ and a point $a \notin S$, we write $\mult_S(a)$ to denote the number of distinct triples $\set{x,y,z}$ contained in $S$ that sum to $a$.
We now introduce notation that we will use throughout the remainder of the paper.
For an $(n,m)$-function $F$ whose restriction to a linear subspace $L \subseteq \F_2^n$ has a Sidon set as its graph, that is, $F$ is differentially 2-uniform on $L$, we write $F' = F|_L$ and 
\[
m_L(v) = \mult_{\graph{F'}}(0,F(0)+v)
\]
for all nonzero $v \in \F_2^m$.
More generally, for any $u \in \F_2^n$, let
\[
m_{L,u}(v) = \mult_{\graph{F'} \cap (\set{0,u}^\perp \times \F_2^m)}(0,F(0)+v).
\]
In other words, $m_{L,u}(v)$ is the exclude multiplicity of $(0,F(0)+v)$ with respect to the graph of $F|_{L \cap \set{0,u}^\perp}$.
For the sake of simplicity, we let
\[
m(v) = m_{\F_2^n}(v) \quad \text{and} \quad
m_u(v) = m_{\F_2^n,u}(v).
\]
Note that $L \cap \set{0,u}^\perp$ has dimension $\dim(L)-1$ precisely when $u \notin L^\perp$, and so we will usually consider $u$ to be in a subspace $U$ satisfying $L^\perp \oplus U = \F_2^n$.
Furthermore, we have 
\begin{equation}\label{eq:mLuv-betaF}
3m_{L,u}(v) = \left|\set{\set{x,y} \subseteq L \cap \set{0,u}^\perp : \beta_F(x,y)=v}\right| 
\end{equation}
where $\beta_F(x,y)=F(0)+F(x)+F(y)+F(x+y)$ because any triple $\set{x,y,z} \subseteq L \cap \set{0,u}^\perp$ such that $(x+y+z, F(x)+F(y)+F(z))=(0,F(0)+v)$ is in one-to-one correspondence to the three distinct triples $\set{x,y}, \set{x,z}, \set{y,z}$ contained in the set from the right-hand-side of \cref{eq:mLuv-betaF}.

We will need the following technical (more or less known) lemma.

\begin{lemma}\label{lem:algdeg-lemma}
    Let $L \subseteq \F_2^n$ be a $d$-dimensional linear subspace, and let $U \subseteq \F_2^n$ be a subspace such that $L^\perp \oplus U = \F_2^n$.
    The algebraic degree of a nonzero Boolean function $f\colon L \to \F_2$ is the largest integer $k$ such that there exists a $k$-dimensional linear subspace $V \subseteq L$ and some $t_0 \in L$ such that 
    \[
    \sum_{u \in U \cap V^\perp}(-1)^{u \cdot t_0} W_f(u) \equiv 
    2^d - 2^{d-k+1} \pmod{2^{d-k+2}}.
    \]
\end{lemma}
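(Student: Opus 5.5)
The plan is to reduce the Walsh sum to a weighted count of $f$ over a coset of $V^\perp\cap L$ inside $L$, and then use the standard characterization of algebraic degree via sums over flats.

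\emph{Step 1: identify the sum.} Since $L^\perp\oplus U=\F_2^n$, the map $u\mapsto (x\mapsto u\cdot x)$ restricted to $U$ is an isomorphism from $U$ onto the dual of $L$. Hence $W_f(u)=\sum_{x\in L}(-1)^{f(x)+u\cdot x}$ is the usual Walsh transform of $f$ as a function on the $d$-dimensional space $L$, indexed by $U$. For a $k$-dimensional subspace $V\subseteq L$, the set $U\cap V^\perp$ corresponds to the annihilator of $V$ in the dual of $L$, which has size $2^{d-k}$. By Poisson summation,
\[
\sum_{u\in U\cap V^\perp}(-1)^{u\cdot t_0}W_f(u)=\sum_{x\in L}(-1)^{f(x)}\sum_{u\in U\cap V^\perp}(-1)^{u\cdot(x+t_0)}=2^{d-k}\sum_{x\in t_0+V}(-1)^{f(x)},
\]
because the inner character sum equals $2^{d-k}$ exactly when $x+t_0\in V$, that is, when $x\in t_0+V$, and $0$ otherwise.

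\emph{Step 2: rewrite with weights.} Writing $w=\wt(f|_{t_0+V})$, the inner sum is $2^k-2w$. The target congruence therefore becomes
\[
2^{d-k}(2^k-2w)\equiv 2^d-2^{d-k+1}\pmod{2^{d-k+2}},
\]
which is equivalent to $-2^{d-k+1}w\equiv -2^{d-k+1}\pmod{2^{d-k+2}}$, i.e.\ to $w$ being odd. So the congruence holds if and only if $\bigoplus_{x\in t_0+V}f(x)=1$.

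\emph{Step 3: apply the degree characterization.} The remaining claim is the known fact that $\deg_{alg}(f)$ is the largest $k$ such that the sum of $f$ over some $k$-dimensional affine flat is $1$. I would justify this via derivatives: $\bigoplus_{x\in t_0+V}f(x)=D_{a_1}\cdots D_{a_k}f(t_0)$ for any basis $a_1,\dots,a_k$ of $V$. If $k>\deg_{alg}(f)$, this $k$-th order derivative vanishes identically. Conversely, let $D=\deg_{alg}(f)$ and choose coordinates on $L$ in which a monomial $x_{i_1}\cdots x_{i_D}$ appears in the ANF of $f$. Taking $V$ spanned by the corresponding coordinate vectors and $t_0=0$, the sum equals the ANF coefficient of that monomial (by the Möbius inversion formula for the ANF), which is $1$. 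Note that $f$ being nonzero guarantees that $D$ is the genuine top degree; the case $D=0$ with $V=\{0\}$ is consistent, since then $f(t_0)=1$.

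The main obstacle is purely bookkeeping in Step 1: checking that $U\cap V^\perp$ has exactly $2^{d-k}$ elements and that the character sum over it detects $t_0+V$ within $L$. This relies on $V^\perp\supseteq L^\perp$ and $\dim(U\cap V^\perp)=\dim V^\perp-\dim L^\perp=(n-k)-(n-d)$.
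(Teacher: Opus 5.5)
Your proposal is correct and follows essentially the same route as the paper. The character sum over $U\cap V^\perp$ (which has dimension $d-k$ because $V^\perp=L^\perp\oplus(U\cap V^\perp)$) turns the Walsh sum into $2^{d-k}\bigl(2^k-2\,\wt(f|_{t_0+V})\bigr)$, so the congruence is equivalent to $f$ having odd weight on the flat $t_0+V$. The only difference is that you also prove the characterization of $\deg_{alg}(f)$ as the largest dimension of a flat on which $f$ has odd weight, using $k$-th order derivatives and the M\"obius formula for ANF coefficients, whereas the paper simply quotes that characterization as known.
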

\begin{proof}
The algebraic degree of $f$ is equal to the largest dimension of an affine subspace $A \subseteq L$ such that the restriction of $f$ to $A$ has odd weight.
Note that for a $k$-dimensional affine subspace $A \subseteq L$, we have $W_{f|_A}(0)=\sum_{x \in A}(-1)^{f(x)} = 2^k-2\wt(f|_A)$, and so $\wt(f|_A)$ is odd if and only if $W_{f|_A}(0) \equiv 2^k-2 \pmod 4$.
Let $V$ be the underlying vector space of $A$, that is $A=t_0+V$ with $t_0 \in L$.
Note that $U \cap V^\perp$ has dimension $d-k$ because $V^\perp = L^\perp \oplus (U \cap V^\perp)$.
So, $1_A(x) = 2^{k-d} \sum_{u \in U \cap V^\perp}(-1)^{u \cdot (x+t_0)}$ for $x \in L$.
We compute 
\begin{align*}
    \sum_{u \in U \cap V^\perp} (-1)^{u \cdot t_0} W_f(u) 
    &= \sum_{u \in U \cap V^\perp} (-1)^{u \cdot t_0} \sum_{x \in L} (-1)^{f(x) + u \cdot x} \\
    &= \sum_{x \in L}(-1)^{f(x)}
    \sum_{u \in U \cap V^\perp} (-1)^{u \cdot (x + t_0)} \\
    &= 2^{d-k} \sum_{x \in A} (-1)^{f(x)} \\
    &= 2^{d-k} W_{f|_A}(0).
\end{align*}
Thus, $W_{f|_A}(0)=2^{k-d}\sum_{u \in U \cap V^\perp} (-1)^{u \cdot t_0} W_f(u)$, and the statement follows because $W_{f|_A}(0) \equiv 2^k-2 \pmod 4$ if and only if $\sum_{u \in U \cap V^\perp}(-1)^{u \cdot t_0} W_f(u) \equiv 2^d-2^{d-k+1} \pmod{2^{d-k+2}}$.
\end{proof}

Recall that in \cite{CouvreurOrtho-Derivative}, it was shown that any component function of the ortho-derivative of a quadratic APN function has algebraic degree at most $n-2$. 
We will derive a more general result which will also generalize \cite[Theorem 1]{CouvreurOrtho-Derivative} to the case of the ortho-derivative of crooked functions.

\begin{proposition}\label{prop:crooked-orthoderivative-algdegbound}
    Let $n$ and $k$ be integers such that $3 \leq k \leq n$, and let $L \subseteq \F_2^n$ be a $k$-dimensional linear subspace.
    Let $F$ be an $(n,m)$-function that has the crooked property and is differentially 2-uniform on $L$.
    Let $S = \graph{F|_L}$, let $v \in \F_2^m$ be nonzero.
    Then $\deg_{alg}(\pi_{F|_L}^v)$ is at most $k-2$ (resp. equal to $k$) if and only if $\mult_S(0,F(0)+v)$ is odd (resp. $\mult_S(0,F(0)+v)$ is even).
    As a consequence, if $n =m$ and $F$ is a crooked function, then $\deg_{alg}(\pi_F) \leq n-2$.
\end{proposition}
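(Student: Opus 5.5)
The plan is to read off the algebraic degree of $\pi_{F|_L}^v$ from the parity of its weight on $L$ and on the linear hyperplanes of $L$, and to compute these parities via \Cref{lem:algdeg-lemma} and the Walsh values supplied by \Cref{lem:ortho-cpts-Walsh2} and \Cref{lem:1S1S1_SandH_otimes}. Write $m=\mult_S(0,F(0)+v)$.

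\emph{Degree $k$.} By \cref{eq:ortho-weight-mult} (with $|A|=2^k$), $\wt(\pi_{F|_L}^v)=2^k-1-3m$. Its parity is therefore $1+m \pmod 2$. Since $\deg_{alg}=k$ exactly when the weight on all of $L$ is odd, this gives $\deg_{alg}(\pi_{F|_L}^v)=k$ if and only if $m$ is even.

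\emph{Degree $k-1$.} It remains to show that the degree is never exactly $k-1$. Equivalently, whenever $m$ is odd, no affine hyperplane of $L$ carries odd weight; and when $m$ is even, degree $k$ already holds, so nothing more is needed. Apply \Cref{lem:algdeg-lemma} with $d=k$ and degree $k-1$. An affine hyperplane is $t_0+V$ with $V=L\cap\set{0,u}^\perp$ for some $u\in U\setminus\set{0}$, so $U\cap V^\perp=\set{0,u}$. The condition for odd weight on $t_0+V$ then reads
\[
W(0)+(-1)^{u\cdot t_0}W(u)\equiv 2^k-4 \pmod 8,
\]
where $W=W_{\pi_{F|_L}^v}$.

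For the two Walsh values:
\begin{itemize}
\item $W(0)=6m+2-2^k$, which follows from \cref{eq:ortho-weight-mult} or the second part of \Cref{lem:ortho-cpts-Walsh2}.
\item For $u\notin L^\perp$, apply \Cref{lem:ortho-cpts-Walsh2} with $\Delta=2$, then \Cref{lem:1S1S1_SandH_otimes}, exactly as in the proof of \Cref{thm:crooked-function-ortho-Walsh}. This gives $W(u)=8m'-2m+2$, where $m'$ is an exclude multiplicity in the graph restricted to a hyperplane. The lemma lets us choose any base point $t$, and $\mult_S(t,F(t)+v)$ does not depend on $t$ because \cref{eq:ortho-weight-mult} expresses it through the weight. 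So modulo $8$, $W(u)\equiv 2-2m$ regardless of $t$.
\end{itemize}

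Now compare the two signs with the target $2^k-4 \pmod 8$, using $k\ge 3$ so that $2^k\equiv 0 \pmod 8$.
\begin{itemize}
\item With sign $+$: $W(0)+W(u)\equiv 4m+4-2^k$. Matching this with $2^k-4$ forces $4m\equiv 0$, i.e.\ $m$ even.
\item With sign $-$: $W(0)-W(u)\equiv 8m-2^k\equiv -2^k$. Matching would require $2^{k+1}\equiv 4 \pmod 8$, which is impossible.
\end{itemize}
Hence some affine hyperplane carries odd weight only if $m$ is even. When $m$ is odd, the degree is therefore neither $k$ nor $k-1$, so it is at most $k-2$. Together with the degree-$k$ step, this gives both equivalences.

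\emph{Consequence for crooked functions.} For a crooked $(n,n)$-function, $F$ is APN with the crooked property and hence strongly plateaued; in particular it is plateaued. By \Cref{prop:plateaued-oddmults}, every $m(v)$ is odd. Since $\pi_{F|_{\F_2^n}}^v=v\cdot\pi_F$, every component of $\pi_F$ has degree at most $n-2$.

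The main obstacle is the bookkeeping modulo $8$ in the hyperplane step. One must make sure that the base-point freedom $t$ in \Cref{lem:ortho-cpts-Walsh2} correctly absorbs the sign $(-1)^{u\cdot t_0}$, and that the term $8m'$ genuinely vanishes modulo $8$; the computation above is arranged so that $m'$ drops out entirely.
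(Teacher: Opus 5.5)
Your proof is correct and follows the paper's route: \Cref{lem:algdeg-lemma} reduces the question to the congruence $W(0)+(-1)^{u\cdot t_0}W(u)\equiv 4 \pmod 8$ over affine hyperplanes of $L$, and the values $W(0)=6m+2-2^k$ and $W(u)=8m'-2m+2$ settle both sign cases. The paper obtains $W(u)$ by citing \Cref{thm:ortho-Walsh-multiplicity-description} at base point $0$, which is the same computation as yours, and it also uses the weight formula \cref{eq:ortho-weight-mult} for the degree-$k$ case. Your version is slightly cleaner, since it needs neither the paper's separate treatment of an identically zero $\pi_{F|_L}^v$ nor its extra argument, in the crooked consequence, that $v\cdot\pi_F$ is not identically zero.
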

\begin{proof}
    Let $U \subseteq \F_2^n$ be a linear subspace such that $L^\perp \oplus U = \F_2^n$.
    If $\pi_{F|_L}^v$ is identically zero, then $\mult_S(0,F(0)+v) = \frac{2^k-1}{3}$, which is odd, by \cref{eq:ortho-weight-mult}.
    So, assume throughout the remainder of this proof that $\pi_{F|_L}^v$ is not identically zero.
    By \Cref{lem:algdeg-lemma}, we know that $\pi_{F|_L}^v$ has algebraic degree at most $k-2$ if and only if there does not exist a $(k-1)$-dimensional linear subspace $V \subseteq L$ and $t_0 \in L$ such that 
    \[
    \sum_{u \in U \cap V^\perp}(-1)^{u \cdot t_0} W_{\pi_{F|_L}^v}(u) \equiv 2^k - 4 \equiv 4 \pmod 8.
    \]
    If $V \subseteq L$ is a subspace of dimension $k-1$, then $U \cap V^\perp = \set{0,u}$ for some nonzero $u \in U$, and on the other hand, every nonzero $u \in U$ corresponds to the $(k-1)$-dimensional linear subspace $L \cap \set{0,u}^\perp$ of $L$.
    Therefore, $\pi_{F|_L}^v$ has algebraic degree at most $k-2$ if and only if 
    \[
    W_{\pi_{F|_L}^v}(0) + (-1)^{u \cdot t_0} W_{\pi_{F|_L}^v}(u) \not\equiv 4 \pmod 8
    \]
    for all nonzero $u \in U$ and $t_0 \in L$.
    For a nonzero $u \in U$ and $t_0 \in L$, we have by \Cref{lem:ortho-cpts-Walsh2} and \Cref{thm:ortho-Walsh-multiplicity-description} that 
    \begin{align*}
        W_{\pi_{F|_L}^v}(0) + (-1)^{u \cdot t_0} W_{\pi_{F|_L}^v}(u)
        &= 6 m_L(v) + 2 - 2^k + (-1)^{u \cdot t_0} (-2m_L(v) + 8m_{L,u}(v) + 2) \\
        &= \begin{cases}
            4m_L(v) + 8m_{L,u}(v) +4-2^k & u \cdot t_0 =0, \\
            8m_L(v) -8m_{L,u}(v) -2^k & u \cdot t_0 =1.
        \end{cases}
    \end{align*}
    If $u \cdot t_0=1$, it is obvious that $W_{\pi_{F|_L}^v}(0) + (-1)^{u \cdot t_0} W_{\pi_{F|_L}^v}(u)$ is divisible by $8$ since $k \geq 3$.
    For the case that $u \cdot t_0=0$, first note  
    \[
    4m_L(v) + 8m_{L,u}(v) +4-2^k \equiv 4(m_L(v)+1) \pmod 8,
    \]
    which is congruent to $0$ modulo $8$ if and only if $m_L(v)$ is odd.
    This proves that $\deg_{alg}(\pi_{F|_L}^v) \leq k-2$ if and only if $m_L(v)$ is odd. 
    Since $\wt(\pi_{F|_L}^v) = 2^k-1-3m_L(v)$ holds by \cref{eq:ortho-weight-mult}, we have that if $\deg_{alg}(\pi_{F|_L}^v)>k-2$, then $\wt(\pi_{F|_L}^v)$ is odd, i.e. $\deg_{alg}(\pi_{F|_L}^v)=k$.
    
    Now, assume $n=m$ and that $F$ is a crooked function, and take $L = \F_2^n$.
    From \cref{eq:ortho-weight-mult}, we have $\mult_S(0,F(0)+v) = \frac{2^n-1-\wt(v \cdot \pi_F)}{3}$.
    If $\wt(v \cdot \pi_F)=0$, then $n$ is even as $\mult_S(0,F(0)+v) = \frac{2^n-1}{3}$.
    However, this is a contradiction by \Cref{prop:zerocpt-characterization} because $\deg_{alg}(F) < n$ as plateaued functions cannot have algebraic degree $n$ \cite{CarletBook}.
    Therefore, $v \cdot \pi_F$ is not identically zero, and the second statement then immediately follows from \Cref{prop:plateaued-oddmults}, which provides the fact that $\mult_S(0,F(0)+v)$ is odd.
\end{proof}

Thus, we have generalized  \cite[Theorem 1]{CouvreurOrtho-Derivative} by proving that for $n \geq 3$, a crooked function $F \colon \F_2^n \to \F_2^n$, and a point $v \in \F_2^n \setminus \set{0}$, we know that $v \cdot \pi_F$ has algebraic degree at most $n-2$.
In order to prove \Cref{conj:Gorodilova}, it remains to prove that for any quadratic APN function $F$, we have $\deg_{alg}(v \cdot \pi_F) \geq n-2$ for all $v \neq 0$.
We will characterize when equality occurs in \Cref{prop:crooked-orthoderivative-algdegbound} in terms of the following Boolean functions being of algebraic degree $2$.

\begin{definition}\label{def:exclude-parity}
    Let $F$ be an $(n,m)$-function that is differentially 2-uniform on a linear subspace $L \subseteq \F_2^n$, and let $U \subseteq \F_2^n$ be any linear subspace such that $L^\perp \oplus U = \F_2^n$.
    For any nonzero $v \in \F_2^m \setminus \set{0}$, we call the function $f_{L,U,v} \colon U \to \F_2$ defined by 
    \[
    f_{L,U,v}(u) = (m_{L,u}(v) \mod 2)
    \]
    an \textit{exclude parity function of $F$ at $(0,F(0)+v)$.}
    If $L =U=\F_2^n$, then we simply denote $f_{L,U,v}(u)$ as $f_v(u)$, and we have 
    \[
    f_v(u) = (m_u(v) \mod 2).
    \]
\end{definition}

Note that under the same assumptions and notation as \Cref{def:exclude-parity}, by \Cref{eq:mLuv-betaF}, we have 
\[
f_{L,U,v}(u) \equiv \left|\set{\set{x,y} \subseteq L \cap \set{0,u}^\perp : \beta_F(x,y)=v}\right|  \pmod{2}.
\]

\begin{remark}
    Under the same assumptions and notation as in \Cref{def:exclude-parity}, clearly the definition of $f_{L,U,v}$ depends on $U$, but only up to a linear change in variables.
    Indeed, let $U_1, U_2 \subseteq \F_2^n$ be subspaces such that $L^\perp \oplus U_1 = \F_2^n = L^\perp \oplus U_2$.
    There exists a linear isomorphism $\varphi \colon U_1 \to U_2$ such that $u+\varphi(u) \in L^\perp$ for all $u \in U_1$.
    This implies $u \cdot x = \varphi(u)\cdot x$ for all $x \in L$, implying $L \cap \set{0,u}^\perp = L \cap \set{0,\varphi(u)}^\perp$.
    For any nonzero $v \in \F_2^m$, we then have $m_{L,u}(v) = m_{L,\varphi(u)}(v)$ for all $u \in U_1$, implying $f_{L,U_1,v}(u) = f_{L,U_2, v}(\varphi(u))$.
    That is, the choice of $U$ in \Cref{def:exclude-parity} only changes the function up to linear equivalence.
\end{remark}

Let us first prove that any exclude parity function is quadratic (recall functions having an algebraic degree at most $2$ are quadratic, rather than having algebraic degree equal to $2$).

\begin{proposition}\label{prop:parity-fcn-algdeg-bound}
    Let $F$ be an $(n,m)$-function that is differentially $\set{0,2}$-valued on a linear subspace $L \subseteq \F_2^n$, and let $U \subseteq \F_2^n$ be a linear subspace such that $L^\perp \oplus U = \F_2^n$.
    Then $f_{L,U,v}$ is quadratic for all nonzero $v \in \F_2^m$.
\end{proposition}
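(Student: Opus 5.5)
The plan is to write the exclude parity function explicitly as a sum, over a fixed family of pairs, of products of two affine functions of $u$. By the remark following \Cref{def:exclude-parity}, which rests on \cref{eq:mLuv-betaF}, we have
\[
f_{L,U,v}(u) \equiv \left|\set{\set{x,y} \subseteq L \cap \set{0,u}^\perp : \beta_F(x,y)=v}\right| \pmod 2 .
\]
This holds because $3m_{L,u}(v)\equiv m_{L,u}(v) \pmod 2$. The hypothesis that $F$ is differentially $\set{0,2}$-valued on $L$ is used exactly here. It makes $\graph{F|_L}$, and hence every subgraph $\graph{F|_{L\cap\set{0,u}^\perp}}$, a Sidon set, so each $m_{L,u}(v)$ is a well-defined exclude multiplicity and \cref{eq:mLuv-betaF} applies.

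Next, fix the set $P_v$ of all unordered pairs $\set{x,y}\subseteq L$ with $\beta_F(x,y)=v$. This set does not depend on $u$. The pairs in $P_v$ have $x\neq y$ and $x,y\neq 0$, since $\beta_F(x,x)=\beta_F(0,y)=0\neq v$; in any case, degenerate pairs would not affect the argument. Then for every $u\in U$,
\[
f_{L,U,v}(u) = \bigoplus_{\set{x,y}\in P_v} 1_{\set{0,u}^\perp}(x)\,1_{\set{0,u}^\perp}(y)
= \bigoplus_{\set{x,y}\in P_v} (1\oplus u\cdot x)(1\oplus u\cdot y).
\]
For fixed $x$, the map $u\mapsto u\cdot x$ is a linear form on $U$. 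So each summand is a product of two affine Boolean functions of $u$ and has algebraic degree at most $2$. A sum modulo $2$ of functions of degree at most $2$ again has degree at most $2$. Hence $f_{L,U,v}$ is quadratic.

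There is no serious obstacle here. The only point needing care is the first step: the reduction from the exclude multiplicity $m_{L,u}(v)$ to the pair count modulo $2$. This requires the Sidon property on each restriction $L\cap\set{0,u}^\perp$, and that property follows immediately from the Sidon property on $L$.
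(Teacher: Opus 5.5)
Your proof is correct and is essentially the paper's argument. The paper writes $f_{L,U,v}(u)=\bigoplus_{E\in\mathcal{S}_{L,v}}1_{E^\perp}(u)$ over the fixed family of $2$-dimensional subspaces $E\subseteq L$ with $\sum_{x\in E}F(x)=v$; you index the same objects by their unordered generating pairs, which counts each $E$ three times and so gives the same function modulo $2$, with each summand again a product of two affine forms in $u$.
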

\begin{proof}
    Let $F' = F|_L$.
    For any $u \in U$, we have by definition that $m_{L,u}(v)$ equals
    \[
    |\{\set{x,y,z} \subseteq L \cap \set{0,u}^\perp : (x+y+z, F(x)+F(y)+F(z))=(0,F(0)+v)\}|.
    \]
    Let $\mathcal{S}_{L,v}$ be the set of $2$-dimensional linear subspaces $E \subseteq L$ such that $\sum_{x \in E} F(x)=v$.
    Then $m_{L,u}(v) = \sum_{E \in \mathcal{S}_{L,v}} 1_{E^\perp}(u)$ for any $u \in U$.
    Therefore,
    \begin{equation}\label{eq:fvu-subspace-sum}
    f_{L,U,v}(u) = \bigoplus_{E \in \mathcal{S}_{L,v}} 1_{E^\perp}(u).
    \end{equation}
    Hence, $f_{L,U,v}$ is a sum of Boolean functions of algebraic degree $2$ (we consider the restrictions of the Boolean functions $1_{E^\perp}$, where $E \in \mathcal{S}_{L,v}$, to $U$), and so $\deg_{alg}(f_{L,U,v}) \leq 2$.
\end{proof}

\begin{remark}
    Let $F$ be an $(n,m)$-function, let $L \subseteq \F_2^n$ be a subspace, and let $\mathcal{S}_{L,v}$ be defined as in the proof of \Cref{prop:parity-fcn-algdeg-bound}.
    Let $F' = F|_L$.
    If $v = 0$, then $\mathcal{S}_{L,v}$ is empty when $\graph{F'}$ is Sidon.
    If $v \neq 0$, then $\mathcal{S}_{L,v}$ being non-empty is equivalent to $F|_L$ having the so-called D-property \cite{Carlet_apnGraphMaximal,Taniguchi2023}.
    Consider when $F|_L$ is plateaued.
    Then, the size of the set $\set{(a,b) \in L^2 : D_a D_b F(x) = w}$ does not depend on $x \in L$ for any $w \in \F_2^m$ \cite{CarletPlateaued}, and this is equivalent to $\widehat{W_{F|_L}^3}(x,F(x)+w)$ not depending on $x \in L$ for all $w \in \F_2^m$ (cf. \cite[Section 4.2]{MihailaThornburgh2026}).
    It is straightforward to show that $\graph{F'}$ is Sidon if and only if 
    \[
    \widehat{W_{F|_L}^3}(x,F(x))
    = 2^{n+m}|\set{(a,b,c) \in L^3 : (a+b+c, F(a)+F(b)+F(c))=(x,F(x))}|
    \]
    is equal to $2^{n+m}(3|L|-2)$ for all $x \in L$ because any such $(a,b,c) \in L^3$ cannot consist of three distinct points.
    So, if $F|_L$ is plateaued, then $\graph{F|_L}$ is a Sidon set if and only if $\mathcal{S}_{L,0}$ is empty, which is equivalent to $\widehat{W_{F|_L}^3}(0,F(0))=2^{n+m}(3|L|-2)$.
\end{remark}

We now describe when equality is obtained in \Cref{prop:crooked-orthoderivative-algdegbound} in terms of exclude parity functions having algebraic degree $2$.

\begin{proposition}\label{prop:orthoderiv-algdeg-equiv}
    Assume $n\geq k \geq 4$, and let $L \subseteq \F_2^n$ be a $k$-dimensional linear subspace, and let $F$ be an $(n,m)$-function that has the crooked property and is differentially 2-uniform on $L$.
    Let $U \subseteq \F_2^n$ be a subspace such that $L^\perp \oplus U = \F_2^n$, let $S = \graph{F|_L}$, and let $v \in \F_2^m \setminus \set{0}$.
    The following are equivalent: 
    \begin{enumerate}
        \item $\deg_{alg}(\pi_{F|_L}^v) = k-2$;
        \item $m_L(v)$ is odd and there exist linearly independent $e_1, e_2 \in U$ such that $m_{L,e_1}(v) + m_{L,e_2}(v) + m_{L,e_1+e_2}(v)$ is even;
        \item $m_L(v)$ is odd and $\deg_{alg}(f_{L,U,v})=2$.
    \end{enumerate}
\end{proposition}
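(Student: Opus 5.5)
The plan is to mirror the proof of \Cref{prop:crooked-orthoderivative-algdegbound}, now looking at $(k-2)$-dimensional subspaces instead of hyperplanes.

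First, by \Cref{prop:crooked-orthoderivative-algdegbound}, $\deg_{alg}(\pi_{F|_L}^v)\le k-2$ holds exactly when $m_L(v)$ is odd. Otherwise the degree is $k$. So in all three items we may assume $m_L(v)$ odd. If $\pi_{F|_L}^v\equiv 0$, then $m_L(v)=\frac{2^k-1}{3}$, and the statements must be checked directly; I expect this case to be absorbed by the general computation below, since nothing in it requires nonvanishing.

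With $m_L(v)$ odd, we apply \Cref{lem:algdeg-lemma} with $d=k$. The degree equals $k-2$ if and only if there exist a $(k-2)$-dimensional $V\subseteq L$ and $t_0\in L$ with
\[
\sum_{u\in U\cap V^\perp}(-1)^{u\cdot t_0}W_{\pi_{F|_L}^v}(u)\equiv 2^k-8\equiv 8 \pmod{16},
\]
using $k\ge 4$. The set $U\cap V^\perp$ is a $2$-dimensional space $\{0,e_1,e_2,e_1+e_2\}$. Conversely, every such pair $e_1,e_2$ of independent vectors in $U$ gives $V=L\cap\{e_1,e_2\}^\perp$.

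Next, for each nonzero $u$ we substitute the values from \Cref{lem:ortho-cpts-Walsh2} and \Cref{thm:ortho-Walsh-multiplicity-description} (taking $\ell=1$):
\[
W(0)=6m_L+2-2^k,\qquad W(u)=8m_{L,u}-2m_L+2 .
\]
Write $\epsilon_u=(-1)^{u\cdot t_0}$; the signs are a character on the $2$-space, so either all three are $+1$ or exactly two are $-1$.

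Case of all signs $+1$: the sum is $8(m_{L,e_1}+m_{L,e_2}+m_{L,e_1+e_2})+8-2^k$. Modulo $16$ this is $8(\Sigma+1)$, where $\Sigma=m_{L,e_1}+m_{L,e_2}+m_{L,e_1+e_2}$. It is $\equiv 8$ exactly when $\Sigma$ is even.

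Case with two signs $-1$ (say on $e_2$ and $e_1+e_2$): the sum is
\[
6m_L+2-2^k+(8m_{L,e_1}-2m_L+2)-(8m_{L,e_2}-2m_L+2)-(8m_{L,e_1+e_2}-2m_L+2),
\]
which equals $8m_L+8(m_{L,e_1}-m_{L,e_2}-m_{L,e_1+e_2})-2^k$. Modulo $16$ this is $8(m_L+\Sigma)=8(1+\Sigma)$, since $m_L$ is odd. So it is again $\equiv 8$ if and only if $\Sigma$ is even. The same parity condition arises in both cases, which gives (1)$\Leftrightarrow$(2). I expect this sign bookkeeping to be the main point needing care.

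For (2)$\Leftrightarrow$(3): by \Cref{prop:parity-fcn-algdeg-bound}, $f=f_{L,U,v}$ is quadratic. The second derivative of $f$ at $0$ in directions $e_1,e_2$ is
\[
f(0)\oplus f(e_1)\oplus f(e_2)\oplus f(e_1+e_2),
\]
where $f(0)=m_L(v)\bmod 2=1$. This derivative equals $1$ exactly when $\Sigma$ is even. A quadratic function has degree exactly $2$ if and only if some second derivative $D_{e_1}D_{e_2}f$, which is constant, is nonzero; such $e_1,e_2$ are then necessarily independent. Hence (2)$\Leftrightarrow$(3).
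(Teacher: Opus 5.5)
Your proposal is correct and follows essentially the same route as the paper. You reduce to $m_L(v)$ odd via \Cref{prop:crooked-orthoderivative-algdegbound}, apply \Cref{lem:algdeg-lemma} with $(k-2)$-dimensional $V$ so that $U\cap V^\perp=\langle e_1,e_2\rangle$, and split according to whether $u\mapsto(-1)^{u\cdot t_0}$ is trivial on $\langle e_1,e_2\rangle$; both cases give $8(1+\Sigma)\pmod{16}$, and you read off (2)$\Leftrightarrow$(3) from $D_{e_1}D_{e_2}f_{L,U,v}(0)$. Your concern about $\pi_{F|_L}^v\equiv 0$ (which the paper does not address) is indeed harmless: there the relevant signed Walsh sum equals $2^k\equiv 0\pmod{16}$ and the degree is $0\neq k-2$, so the general computation already covers that case.
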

\begin{proof}
    By \Cref{prop:crooked-orthoderivative-algdegbound}, we know that $\deg_{alg}(\pi_{F|_L}^v)\leq k-2$ if and only if $m_L(v)$ is odd, so the equality $\deg_{alg}(\pi_{F|_L}^v)=k-2$ implies $m_L(v)$ is odd.
    So, if $m_L(v)$ is odd, then we know that $\deg_{alg}(\pi_{F|_L}^v) = k-2$ if and only if there exists a $(k-2)$-dimensional subspace $V \subseteq L$ and $t_0 \in L$ such that
    \[
    \sum_{u \in U \cap V^\perp} (-1)^{u \cdot t_0} W_{\pi_{F|_L}^v}(u) 
    \equiv 2^k - 2^{k-(k-2)+1} 
    \equiv 8
    \pmod{16}
    \]
    by \Cref{lem:algdeg-lemma}.

    Let $V \subseteq L$ be a $(k-2)$-dimensional subspace, and let $t_0 \in L$.
    Then $U \cap V^\perp$ is a 2-dimensional subspace and equals $\langle e_1, e_2 \rangle$ for some linearly independent $e_1,e_2 \in U$.
    The size of the intersection $\set{0,t_0}^\perp$ with $U \cap V^\perp$ is either $2$ or $4$.
    In the former case, we have, upon assuming without loss of generality that $e_1 \cdot t_0 = 0$, that 
    \begin{align*}
     \sum_{u \in U \cap V^\perp} (-1)^{u \cdot t_0} W_{\pi_{F|_L}^v}(u) 
     &= W_{\pi_{F|_L}^v}(0) +  W_{\pi_{F|_L}^v}(e_1) -  W_{\pi_{F|_L}^v}(e_2) -W_{\pi_{F|_L}^v}(e_1+e_2) \\
     &= 8m_L(v) - 2^k + 8(m_{L,e_1}(v) - m_{L,e_2}(v) - m_{L,e_1+e_2}(v))
    \end{align*}
    by \Cref{lem:ortho-cpts-Walsh2} and \Cref{thm:ortho-Walsh-multiplicity-description}.
    In the latter case, we have 
    \begin{align*}
        \sum_{u \in U \cap V^\perp} (-1)^{u \cdot t_0} W_{\pi_{F|_L}^v}(u) 
        &=  W_{\pi_{F|_L}^v}(0) +  W_{\pi_{F|_L}^v}(e_1) +  W_{\pi_{F|_L}^v}(e_2) +  W_{\pi_{F|_L}^v}(e_1+e_2) \\
        &=8 + 8(m_{L,e_1}(v) + m_{L,e_2}(v) + m_{L,e_1+e_2}(v))-2^k.
    \end{align*}
    In the first case, we see that if $m_L(v)$ is odd, then $\sum_{u \in U \cap V^\perp} (-1)^{u \cdot t_0} W_{\pi_{F|_L}^v}(u) \equiv 8 \pmod{16}$ is equivalent to $m_{L,e_1}(v) + m_{L,e_2}(v) + m_{L,e_1+e_2}(v)$ being even, i.e. $f_{L,U,v}(e_1) \oplus f_{L,U,v}(e_2) \oplus f_{L,U,v}(e_1\oplus e_2) = 0$.
    The equivalence of $\sum_{u \in U \cap V^\perp} (-1)^{u \cdot t_0} W_{\pi_{F|_L}^v}(u) \equiv 8 \pmod{16}$ and $m_{L,e_1}(v) + m_{L,e_2}(v) + m_{L,e_1+e_2}(v)$ being even is also straightforward in the second case.
    The equivalence of the first and second statements then holds.

    Assume $m_L(v)$ is odd, i.e. $f_{L,U,v}(0)=1$.
    Since $\deg_{alg}(f_{L,U,v}) \leq 2$ by \Cref{prop:parity-fcn-algdeg-bound}, the condition $\deg_{alg}(f_{L,U,v}) = 2$ is equivalent to the existence of linearly independent $e_1, e_2 \in U$ such that $D_{e_1}D_{e_2}f_{L,U,v}(0)=\bigoplus_{u \in \langle e_1, e_2 \rangle}f_{L,U,v}(u) = 1$.
    However, $\bigoplus_{u \in \langle e_1, e_2 \rangle}f_{L,U,v}(u) = 1$ if and only if $m_{L,e_1}(v) + m_{L,e_2}(v) + m_{L,e_1+e_2}(v)$ is even, so the second and third statements are equivalent.
\end{proof}

We now prove the following result, which implies Gorodilova's conjecture from \cite{Gor20} that the ortho-derivative of any quadratic APN $(n,n)$-function has all components of algebraic degree $n-2$, where $n \geq 4$.

\begin{theorem}\label{thm:quadraticAPN-orthoderiv-algdeg}
    Assume $n\geq k \geq 4$, let $L \subseteq \F_2^n$ be a $k$-dimensional linear subspace, and let $F$ be an $(n,m)$-function with the crooked property that is differentially 2-uniform on $L$.
    Let $v \in \F_2^m$ be nonzero.
    If $m_L(v)$ is odd and there exists $c \in \F_2^m$ such that $c \cdot v = 1$ and $c \cdot F|_L$ is quadratic, then $\deg_{alg}(\pi_{F|_L}^v)=k-2$.
\end{theorem}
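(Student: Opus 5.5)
The plan is to use \Cref{prop:orthoderiv-algdeg-equiv}. Since $m_L(v)$ is odd by hypothesis, it suffices to show $\deg_{alg}(f_{L,U,v})=2$. We argue by contradiction: suppose $f_{L,U,v}$ is affine. By \Cref{prop:parity-fcn-algdeg-bound} and its proof, $f_{L,U,v}(u)=\bigoplus_{E\in\mathcal{S}_{L,v}}1_{E^\perp}(u)$, where $\mathcal{S}_{L,v}$ is the set of $2$-dimensional linear subspaces $E\subseteq L$ with $\sum_{x\in E}F(x)=v$. Note that $|\mathcal{S}_{L,v}|=m_L(v)$ is odd.

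\textbf{Step 1 (rewrite second derivatives).} Fix linearly independent $e_1,e_2\in U$ and put $V=L\cap\langle e_1,e_2\rangle^\perp$, a subspace of codimension $2$ in $L$. For a $2$-dimensional $E\subseteq L$, the quantity $|\langle e_1,e_2\rangle\cap E^\perp|$ equals $4$, $2$ or $1$ according as $\dim(E\cap V)=2$, $1$ or $0$. Hence
\[
D_{e_1}D_{e_2}f_{L,U,v}(0)\equiv|\{E\in\mathcal{S}_{L,v} : E\cap V=\{0\}\}|\pmod 2 .
\]
As $e_1,e_2$ vary, $V$ ranges over all codimension-$2$ subspaces of $L$. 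So the assumption that $f_{L,U,v}$ is affine says: every codimension-$2$ subspace $V\subseteq L$ is complementary to an even number of elements of $\mathcal{S}_{L,v}$.

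\textbf{Step 2 (use the quadratic component).} Let $q=c\cdot F|_L$, which is quadratic by hypothesis, and let $B(x,y)=q(0)+q(x)+q(y)+q(x+y)$ be its associated alternating bilinear form on $L$. For $E=\{0,x,y,x+y\}\in\mathcal{S}_{L,v}$ we get $B(x,y)=c\cdot v=1$. Thus every element of $\mathcal{S}_{L,v}$ is a non-isotropic plane for $B$; in particular $B\neq 0$.

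\textbf{Step 3 (double counting).} I would choose a family $\mathcal{V}$ of codimension-$2$ subspaces adapted to $B$ and sum the parities from Step 1 over $\mathcal{V}$. A first candidate is all $V$ of the form $P^{\perp_B}$ (the $B$-orthogonal of $P$) with $P$ a $B$-non-isotropic plane, together with the $V$ containing $\mathrm{rad}(B)$. Exchanging the order of summation, each $E\in\mathcal{S}_{L,v}$ contributes the number of $V\in\mathcal{V}$ complementary to $E$. Because $E$ is non-isotropic, the aim is to show that this number depends only on that fact (via the transitivity of the symplectic group of $B$ on non-isotropic planes) and is \emph{odd}. The total would then be $\equiv|\mathcal{S}_{L,v}|=m_L(v)\equiv 1\pmod 2$. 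On the other hand, Step 1 makes it a sum of even numbers, which is the desired contradiction.

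\textbf{Main obstacle.} The difficulty is choosing $\mathcal{V}$ so that the count attached to each non-isotropic plane is odd, while planes meeting $\mathrm{rad}(B)$ nontrivially are handled uniformly. If $\mathcal{V}$ consisted of all codimension-$2$ subspaces, every plane would have $2^{2(k-2)}$ complements, which is even and gives nothing. So the family must genuinely use $B$. I would first try the case where $B$ is nondegenerate, taking $V=P^{\perp_B}$ with $P$ non-isotropic; here $V\cap E=\{0\}$ is equivalent to $B|_{E\times P}$ being nondegenerate. I would then reduce the degenerate case by passing to $L/\mathrm{rad}(B)$, where $k\ge4$ should be used to ensure enough room. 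If this parity count fails, the fallback is to exhibit directly a single codimension-$2$ subspace $V$ complementary to an odd number of elements of $\mathcal{S}_{L,v}$, chosen by induction on $k$ using the recursion of \Cref{thm:ortho-Walsh-multiplicity-description}.
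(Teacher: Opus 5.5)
Your Steps 1 and 2 are correct and are exactly the two ingredients the paper uses. The first is the formula $f_{L,U,v}=\bigoplus_{E\in\mathcal S_{L,v}}1_{E^\perp}$; its second derivative $D_{e_1}D_{e_2}f_{L,U,v}(0)$ is indeed the parity of the number of $E\in\mathcal S_{L,v}$ complementary to $V=L\cap\langle e_1,e_2\rangle^\perp$. The second is that $B(a_E,b_E)=c\cdot v=1$ for every $E\in\mathcal S_{L,v}$. But Step 3 is the entire content of the proof, and it is not carried out. Moreover, the family you propose does not work. Suppose $B$ is nondegenerate on $L$ with $k=2r$, and $E$ is a non-isotropic plane. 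Then $P^{\perp_B}\cap E=\{0\}$ iff $P\cap E^{\perp_B}=\{0\}$, i.e.\ iff $P$ is the graph of a linear map $g\colon E\to E^{\perp_B}$. Such a $P$ is non-isotropic iff $B(g(e_1),g(e_2))=0$ for a basis $e_1,e_2$ of $E$. Hence $E$ is complementary to exactly
\[
2^{2r-2}+\bigl(2^{2r-2}-1\bigr)2^{2r-3}
\]
members of your family, which is even for every $r\geq 2$ (it is $10$ when $k=4$). Adding the subspaces containing $\mathrm{rad}(B)$ does not help: after passing to $L/\mathrm{rad}(B)$ these contribute $2^{4(r-1)}$ complements, where $2r=\mathrm{rank}(B)$. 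So your candidate only works when $\mathrm{rank}(B)=2$. The symplectic transitivity you invoke gives constancy of the count but says nothing about its parity, which is precisely the issue. The fallback (induction on $k$ via the recursion of Theorem~\ref{thm:ortho-Walsh-multiplicity-description}) is not an argument as stated.

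The missing idea is to choose $\mathcal V$ so that the associated decomposable $2$-forms sum to $B$ itself. Take a basis $\ell_1,\dots,\ell_k$ of $L$ and the dual basis $u_1,\dots,u_k\in U$ (so $\ell_i\cdot u_j=\delta_{ij}$). Put $V_{ij}=L\cap\langle u_i,u_j\rangle^\perp$ and $\mathcal V=\set{V_{ij} : i<j,\ B(\ell_i,\ell_j)=1}$. For $E=\langle a,b\rangle$ with coordinates $a_i,b_i$, one has $E\cap V_{ij}=\{0\}$ iff $a_ib_j\oplus a_jb_i=1$. So the number of members of $\mathcal V$ complementary to $E$ is congruent modulo $2$ to
\[
\bigoplus_{i<j}B(\ell_i,\ell_j)\,(a_ib_j\oplus a_jb_i)=B(a,b),
\]
by bilinearity and $B(\ell_i,\ell_i)=0$. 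Every $E\in\mathcal S_{L,v}$ is therefore counted an odd number of times. Your double count then gives $m_L(v)\equiv 1\pmod 2$, contradicting the evenness from Step 1, with no nondegeneracy, radical reduction or group action needed. This is exactly the paper's proof in different language. Your Step 1 parity for $V_{ij}$ is the ANF coefficient $\bigoplus_{E}\alpha_{E,i,j}$ of $y_iy_j$ in $f_{L,U,v}$. The paper shows $\bigoplus_{i<j}\beta_{c\cdot F}(\ell_i,\ell_j)\bigoplus_{E}\alpha_{E,i,j}=1$, so some such coefficient is nonzero.
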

\begin{proof}
    Let $v \in \F_2^m$ be nonzero, and assume that $m_L(v)$ is odd and there exists a nonzero $c \in \F_2^m$ such that $c \cdot v = 1$ and $c \cdot F|_L$ is quadratic.
    Let $\mathcal{S}_{L,v}$ be the set of 2-dimensional linear subspaces $E \subseteq L$ such that $\sum_{x \in E} F(x) = v$.
    For any $E \in \mathcal{S}_{L,v}$, let $a_E, b_E \in L$ constitute a basis for $E$.
    Let $U \subseteq \F_2^n$ be a subspace such that $L^\perp \oplus U = \F_2^n$.
    Let $\ell_1, \dots, \ell_k\in L$ be a basis for $L$, and let $u_1, \dots, u_k \in U$ be the unique basis for $U$ such that $\ell_i \cdot u_j = \delta_{ij}$ where $\delta_{ij}$ is the Kronecker delta.
    Let $a_E = \sum_{i=1}^k a_{E,i} \ell_i$ and $b_E = \sum_{i=1}^k b_{E,i} \ell_i$, where $a_{E,i}, b_{E,i} \in \F_2$.

    Let $u = \sum_{i=1}^k y_i u_i \in U$, where $y_i \in \F_2$.
    For any $E \in \mathcal{S}_{L,v}$, it is clear that $1_{E^\perp}(u) = (a_E\cdot u \oplus 1)(b_E \cdot u \oplus 1)$.
    Observe that 
    \begin{align*}
        a_E \cdot u &= \parens{ \sum_{i=1}^k a_{E,i} \ell_i} \cdot \parens{ \sum_{j=1}^k y_j u_j } 
        = \bigoplus_{i,j=1}^k a_{E,i}y_j (\ell_i \cdot u_j) 
        = \bigoplus_{i=1}^k a_{E,i}y_i,
    \end{align*}
    and similarly we have $b_E \cdot u = \bigoplus_{j=1}^k b_{E,j}y_j$.
    Then 
    \begin{align*}
        (a_E \cdot u)(b_E \cdot u) 
        &= \bigoplus_{i,j=1}^k a_{E,i}b_{E,j}y_i y_j 
        =\bigoplus_{i=1}^k a_{E,i}b_{E,i}y_i \oplus \bigoplus_{1 \leq i < j \leq k} \alpha_{E,i,j} y_i y_j,
    \end{align*}
    where we let $\alpha_{E,i,j}=a_{E,i} b_{E,j} \oplus a_{E,j} b_{E,i}$.
    
    Let $q_{L,U,v} \colon U \to \F_2$ be the homogeneous quadratic Boolean function whose ANF only consists of the degree 2 monomials that are in the ANF of $f_{L,U,v}$ (recall that $f_{L,U,v}$ is the Boolean function on $U$ taking value $1$ at $u$ if and only if $m_{L,u}(v)=\mult_{\graph{F'}}(0,F(0)+v)$ is odd, where $F' = F|_{L \cap \set{0,u}^\perp}$).
    Recall from \cref{eq:fvu-subspace-sum} that $f_{L,U,v}(u) = \bigoplus_{E \in \mathcal{S}_{L,v}} 1_{E^\perp}(u)$, and so 
    \begin{equation}\label{qv-anf}
    q_{L,U,v}(u) 
    = \bigoplus_{E \in \mathcal{S}_{L,v}} \bigoplus_{1 \leq i < j 
    \leq k} \alpha_{E,i,j} y_i y_j = \bigoplus_{1 \leq i < j\leq k} y_i y_j \bigoplus_{E \in \mathcal{S}_{L,v}} \alpha_{E,i,j}.
    \end{equation}
    Since $c\cdot F|_L$ is quadratic, the function $\beta_{c \cdot F}(a,b) = c\cdot (F(0)+F(a)+F(b)+F(a+b))$ is bilinear on $L^2$.
    For $E \in \mathcal{S}_{L,v}$, we have
    \[
    1 = \beta_{c\cdot F}(a_E, b_E) 
    = \beta_{c \cdot F}\parens{ \sum_{i=1}^k a_{E,i}\ell_i, \sum_{j=1}^k b_{E,j}\ell_j} 
    = \bigoplus_{i,j=1}^k a_{E,i}b_{E,j} \beta_{c \cdot F}(\ell_i, \ell_j),
    \]
    which is equal to $\bigoplus_{1 \leq i< j \leq k} \alpha_{E,i,j} \beta_{c \cdot F}(\ell_i, \ell_j)$ since $\beta_{c \cdot F}$ is symmetric and $\beta_{c \cdot F}(\ell_i, \ell_i) = 0$ for all $1 \leq i \leq k$.
    Since $|\mathcal{S}_{L,v}| = m_L(v)$ is odd, we have
    \begin{align*}
        1  
        &= \bigoplus_{E \in \mathcal{S}_{L,v}} \bigoplus_{ 1 \leq i < j \leq k} \alpha_{E,i,j} \beta_{c \cdot F}(\ell_i, \ell_j) 
        = \bigoplus_{ 1 \leq i < j \leq k}\beta_{c \cdot F}(\ell_i, \ell_j) 
        \bigoplus_{E \in \mathcal{S}_{L,v}}
        \alpha_{E,i,j}
    \end{align*}
    If $q_{L,U,v}$ is identically zero, then \cref{qv-anf} implies $\bigoplus_{E \in \mathcal{S}_{L,v}}  \alpha_{E,i,j} = 0$ for all $1 \leq i < j \leq k$ because the ANF of $q_{L,U,v}$ is unique.
    However, this is impossible as $0 \neq 1=
    \bigoplus_{ 1 \leq i < j \leq k}\beta_{c \cdot F}(\ell_i, \ell_j) 
        \bigoplus_{E \in \mathcal{S}_{L,v}}
        \alpha_{E,i,j}$.
    Hence, $q_{L,U,v}$ is not identically zero, or equivalently $\deg_{alg}(f_{L,U,v})=2$ by \Cref{prop:parity-fcn-algdeg-bound}.
    The result then follows by \Cref{prop:orthoderiv-algdeg-equiv}.
\end{proof}

We then have the following corollary, proving \Cref{conj:Gorodilova}.

\begin{corollary}\label{cor:more-quadratic-cpts-closer-to-conjecture}
    Assume $n \geq 4$, and let $F \colon \F_2^n \to \F_2^n$ be a crooked function.
    Let $\mathcal{Q}(F)$ be the subspace 
    \[
    \mathcal{Q}(F) = \set{b \in \F_2^n : b \cdot F \text{ is quadratic}}.
    \]
    Then $\pi_F$ has at least $2^n-2^{n-\dim(\mathcal{Q}(F))}$ components of algebraic degree $n-2$.
    In particular, $\deg_{alg}(v \cdot \pi_F) = n-2$ for all nonzero $v \in \F_2^n$ when $F$ is quadratic.
\end{corollary}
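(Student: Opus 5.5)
The plan is to apply \Cref{thm:quadraticAPN-orthoderiv-algdeg} with $L=\F_2^n$ and $k=n$, so that $\pi_{F|_L}^v = v\cdot\pi_F$, and to count the nonzero $v$ for which its hypotheses hold. A crooked $(n,n)$-function has the crooked property, and it is differentially $2$-uniform on $\F_2^n$ because it is APN. The other two hypotheses need checking separately.

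First, $m(v)=\mult_{\graph F}(0,F(0)+v)$ must be odd for every nonzero $v$. A crooked function is strongly plateaued by \Cref{prop:strongplat-char}, since it is differentially $2$-uniform with the crooked property, and hence it is plateaued. By \Cref{prop:plateaued-oddmults} (valid for $n\geq 3$), $m(v)$ is odd for all nonzero $v$. The same fact appears in the proof of \Cref{prop:crooked-orthoderivative-algdegbound}.

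Second, we need some $c\in\mathcal{Q}(F)$ with $c\cdot v=1$. Write $Q=\mathcal{Q}(F)$, which is a linear subspace (the footnote in the introduction notes this; sums of quadratic components are quadratic). Such a $c$ exists exactly when $v\notin Q^\perp$. The only place I see any subtlety is whether $c=0$ could be forced, and it cannot: $c\cdot v=1$ already forces $c\neq 0$. Since $\dim Q^\perp = n-\dim Q$, the set $\F_2^n\setminus Q^\perp$ has $2^n-2^{n-\dim Q}$ elements, all of them nonzero. For each such $v$, \Cref{thm:quadraticAPN-orthoderiv-algdeg} gives $\deg_{alg}(v\cdot\pi_F)=n-2$, which is the claimed bound.

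For the final claim: if $F$ is quadratic then $Q=\F_2^n$ and $Q^\perp=\{0\}$, so every nonzero $v$ satisfies $\deg_{alg}(v\cdot\pi_F)=n-2$. A quadratic function is crooked exactly when it is APN, so this settles \Cref{conj:Gorodilova}.
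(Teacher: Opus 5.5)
Your proposal is correct and follows the paper's proof: you obtain oddness of $m(v)$ from \Cref{prop:plateaued-oddmults}, and you apply \Cref{thm:quadraticAPN-orthoderiv-algdeg} with $L=\F_2^n$ to every $v\notin\mathcal{Q}(F)^\perp$, which gives the count $2^n-2^{n-\dim(\mathcal{Q}(F))}$. You also justify, via \Cref{prop:strongplat-char}, the plateauedness that the paper uses implicitly when it invokes \Cref{prop:plateaued-oddmults}; this extra step is a welcome addition.
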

\begin{proof}
    By \Cref{prop:plateaued-oddmults}, we know that $\mult_{\graph F}(a,b)$ is odd for all $(a,b) \notin \graph{F}$, and so $m(v)$ is odd for all nonzero $v \in \F_2^n$.
    We know that if $v \notin \mathcal{Q}(F)^\perp$, then there exists a nonzero $c\in \F_2^n$ such that $c \cdot v = 1$, implying $v \cdot \pi_F$ has algebraic degree $n-2$ by \Cref{thm:quadraticAPN-orthoderiv-algdeg}.
    Thus, the number of components of $\pi_F$ of algebraic degree $n-2$ is at least $2^n - 2^{\dim(\mathcal{Q}(F)^\perp)}$, and the result follows.
\end{proof}

\subsection{Crooked functions in an even number of variables}

For $n$ even, an $n$-variable Boolean function is called \textit{semi-bent} if $f$ has linearity $2^{\frac{n}{2}+1}$.
The following result immediately implies that any quadratic APN function over $\F_2^n$, with $n$ even, has a semi-bent component function, but we will derive a lower bound on the number of semi-bent components of a quadratic APN function in \Cref{cor:n-semi-bent}.

We now provide another characterization of when $v \cdot \pi_F$ has algebraic degree equal to $n-2$ when $F$ is crooked.
For an even positive integer $n$ and a vectorial Boolean function $F$, denote by $\bentcomps{F}$ (resp. $\semibentcomps{F}$) the set of $b \in \F_2^n$ such that $b \cdot F$ is bent (resp. semi-bent).

\begin{proposition}\label{prop:algdeg-characterization-semibent}
    Assume $n\geq4$ is even.
    Let $F \colon \F_2^n \to \F_2^n$ be a crooked function.
    For any $b \in \F_2^n$ and any affine hyperplane $H \subseteq \F_2^n$, let $\lambda_{b,H}$ be the amplitude of $b\cdot F|_H$, and let $\lambda_{b,H}^2 = 2^{n+k_{b,H}}$.
    Let $e_1, e_2 \in \F_2^n$ be linearly independent vectors, and let $v \in \F_2^n$ be nonzero. 
    Then $m_{e_1}(v) + m_{e_2}(v)+m_{e_1+e_2}(v)$ is even if and only if 
    \[
    \sum_{b \in \semibentcomps{F}} (-1)^{v \cdot b} (2^{k_{b,H_1}} + 2^{k_{b,H_2}} + 2^{k_{b,H_3}}) \equiv 6 \pmod {12}
    \]
    where $H_1 = \set{0,e_1}^\perp, H_2 = \set{0,e_2}^\perp$ and $H_3 = \set{0,e_1+ e_2}^\perp$.
\end{proposition}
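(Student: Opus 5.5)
We express each $m_{e_i}(v)$ via the amplitudes of the components of $F$ restricted to the hyperplane $H_i$, using \Cref{rem:plateaued-restriction-mults}, and then reduce the parity condition to a congruence. Since $F$ is crooked, it is APN with the crooked property, hence strongly plateaued, so every $b\cdot F$ is partially-bent. By the result of \cite{CarletThornburghRestrictions} recalled in \Cref{prelim}, $F|_H$ is then plateaued on every affine hyperplane $H$, and the amplitudes $\lambda_{b,H}$ are well defined. Applying \Cref{rem:plateaued-restriction-mults} with $A=H_i$ (a linear hyperplane containing $0$) and $t=0$, $c=v$ gives
\[
6\cdot 2^n\, m_{e_i}(v) = \sum_{b\in\F_2^n}(-1)^{v\cdot b}\lambda_{b,H_i}^2 = 2^n\sum_{b}(-1)^{v\cdot b}2^{k_{b,H_i}}.
\]
Summing over $i=1,2,3$, we get $6\,(m_{e_1}(v)+m_{e_2}(v)+m_{e_1+e_2}(v)) = \sum_b (-1)^{v\cdot b}(2^{k_{b,H_1}}+2^{k_{b,H_2}}+2^{k_{b,H_3}})$. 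The left side is $\equiv 6 \pmod{12}$ exactly when the sum of the three multiplicities is odd, and $\equiv 0 \pmod{12}$ exactly when it is even. Since we want to characterise evenness, we must show that the contribution of the non-semi-bent $b$ accounts for the shift between $0$ and $6$ modulo $12$.

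Next we split the sum over $b$ by the type of $b\cdot F$. Since $b\cdot F$ is partially-bent with $n$ even, its amplitude is $2^{(n+s)/2}$ where $s=\dim\mathcal{E}_{b\cdot F}$ is even. The restriction of a partially-bent function to a hyperplane has squared amplitude $2^{n+s}$ or $2^{n+s-1}$ depending on how $H$ meets the linear kernel, up to the known rules. In particular:
\begin{itemize}
\item For $b=0$, $k_{0,H}=n-1$ for each hyperplane.
\item For bent $b\cdot F$ ($s=0$), the restriction to any hyperplane is near-bent with $\lambda^2=2^{n}$, so $k=0$ and the contribution is $3$ per bent $b$.
\item For $b\cdot F$ with $s\ge 4$, all $2^{k}$ are divisible by $2^{2}$ or more.
\end{itemize}
I would then use that a crooked function has no components with $s\ge 4$. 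This follows from the APN Parseval-type identity of \Cref{rem:amplitude-sums-Sidon-iff}: $\sum_b \lambda_b^2 = 2^n(3\cdot 2^n-2)$ together with the counts of bent and semi-bent components, which force $|\bentcomps F| = \frac{2}{3}(2^n-1)$ and $|\semibentcomps F|=\frac13(2^n-1)$. Granting this, the total contribution from $b=0$ and the bent $b$ is
\[
3\cdot 2^{n-1} + 3\sum_{b\in\bentcomps F}(-1)^{v\cdot b}.
\]
I would evaluate $\sum_{b\in\bentcomps F}(-1)^{v\cdot b}$ modulo $4$ via $\sum_{b\ne 0}(-1)^{v\cdot b}=-1$ and the semi-bent sum, or show it is $\equiv 0$ or $2 \pmod 4$ as needed so that $3\cdot(\text{it}) + 3\cdot 2^{n-1} \equiv 6 \pmod{12}$. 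Given $n\ge4$, $3\cdot2^{n-1}\equiv 0 \pmod{12}$.

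The main obstacle is exactly this residue computation for the bent part modulo $12$, namely showing that the bent contribution is $\equiv 6$. For it I would first try an alternative: apply the same identity with $H=\F_2^n$, i.e. \Cref{thm:crooked-function-ortho-Walsh}, using $6m(v)=\sum_b(-1)^{v\cdot b}\lambda_b^2/2^n$ with $m(v)$ odd (\Cref{prop:plateaued-oddmults}). This gives $2^n+\sum_{\mathcal B}(-1)^{v\cdot b}+4\sum_{\mathcal{SB}}(-1)^{v\cdot b}\equiv 6\pmod{12}$, which pins down the bent sum modulo $12$ in terms of the semi-bent sum. The semi-bent sum is then reabsorbed into the stated expression, since each $2^{k_{b,H_i}}\in\{1,2\}$ relates to $4=2^{k_b}$. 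Combining the two congruences should yield the stated equivalence.
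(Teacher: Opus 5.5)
Your opening step, applying \Cref{rem:plateaued-restriction-mults} on each linear hyperplane $H_i$ at $t=0$ to get $6\,(m_{e_1}(v)+m_{e_2}(v)+m_{e_1+e_2}(v))=\sum_b(-1)^{v\cdot b}(2^{k_{b,H_1}}+2^{k_{b,H_2}}+2^{k_{b,H_3}})$, is exactly the paper's. The gap is in how you dispose of the components that are neither bent nor semi-bent. You assert that a crooked function has no component with $s=\dim\mathcal{E}_{b\cdot F}\ge 4$, and that Parseval forces $|\bentcomps{F}|=\frac23(2^n-1)$ and $|\semibentcomps{F}|=\frac13(2^n-1)$. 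This is false. The identity $\sum_b\lambda_b^2=2^n(3\cdot 2^n-2)$ only yields $\sum_{b\neq 0}2^{s_b}=2(2^n-1)$. Together with the count $2^n-1$ of nonzero $b$, these are two linear constraints, and they do not force $s_b\in\{0,2\}$; that is the Gold distribution, not the general one. Quadratic APN functions of linearity $2^{n-1}$ exist for $n\in\{6,8\}$, i.e.\ with $s=n-2\ge 4$. The $n=10$ amplitude distributions recalled in the paper also contain components with $s=4,6,8$.

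The missing observation is purely arithmetic. The function $b\cdot F|_{H}$ is plateaued in the odd number $n-1$ of variables, so $\lambda_{b,H}\ge 2^{n/2}$ is a power of $2$, and $k_{b,H}$ is even and nonnegative. When $\mathcal{L}(b\cdot F)\ge 2^{\frac n2+2}$ (this includes $b=0$, for which $k_{0,H}=n-2$, not $n-1$), restriction halves the amplitude at most once, so $k_{b,H}\ge 2$. Hence $2^{k_{b,H}}\equiv 4\pmod{12}$, and the three-term sum is $\equiv 0\pmod{12}$. You noticed divisibility by $4$ but not by $3$, and the latter is what makes these terms vanish modulo $12$.

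The bent part is also left open, and the hedge ``$\equiv 0$ or $2\pmod 4$ as needed'' does not suffice. Only $\widehat{1_{\bentcomps{F}}}(v)\equiv 2\pmod 4$ is compatible with the statement, since $3\widehat{1_{\bentcomps{F}}}(v)\bmod 12$ depends only on $\widehat{1_{\bentcomps{F}}}(v)\bmod 4$. Your fallback idea does prove it, provided you reduce modulo $4$ rather than $12$. Start from $6m(v)=2^n+\widehat{1_{\bentcomps{F}}}(v)+\sum_{b\notin\bentcomps{F}\cup\{0\}}(-1)^{v\cdot b}2^{s_b}$, where every $s_b\ge 2$ in the last sum, and use that $m(v)$ is odd by \Cref{prop:plateaued-oddmults}. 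This gives $\widehat{1_{\bentcomps{F}}}(v)\equiv 2\pmod 4$. So the bent contribution is $\equiv 6\pmod{12}$, and evenness of $m_{e_1}(v)+m_{e_2}(v)+m_{e_1+e_2}(v)$ is equivalent to the semi-bent sum being $\equiv 6\pmod{12}$.

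No ``reabsorption'' of the semi-bent sum is needed, and the reason you give for it is wrong anyway. Restricting a partially-bent function of squared amplitude $2^{n+s}$ to a hyperplane gives squared amplitude $2^{n+s}$ or $2^{n+s-2}$, not $2^{n+s-1}$. So for semi-bent $b$ one has $2^{k_{b,H_i}}\in\{1,4\}$, not $\{1,2\}$.

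With these two repairs your argument becomes the paper's. The one difference is that the paper imports $\widehat{1_{\bentcomps{F}}}(v)\equiv 2\pmod 4$ from \cite{MihailaThornburgh2026}. Your mod-$4$ reduction derives it in one line from \Cref{prop:plateaued-oddmults} and the full-space case of \Cref{rem:plateaued-restriction-mults}, which is a small genuine improvement.
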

\begin{proof}
    Recall from \Cref{prelim} that the authors proved in \cite{CarletThornburghRestrictions} that the restriction of any partially-bent Boolean function to an affine hyperplane is plateaued.
    Then, any strongly plateaued function has a plateaued restriction on any affine hyperplane, so $\lambda_{b,H}$ is well-defined.
    Letting $S = \graph{F} \cap (H \times \F_2^n)$, recall that \Cref{rem:plateaued-restriction-mults} provides 
    $
     \mult_S(x,y)=\frac{1_H(x)}{6 \cdot 2^n} \sum_{b \in \F_2^n} (-1)^{b \cdot (y+ F(x))} \lambda_{b,H}^2
    $
    for all $(x,y) \notin S$.
    
    Since the dimension of an affine hyperplane $H\subseteq\F_2^n$ is odd, we know that $\lambda_{b,H} \geq 2^{\frac{\dim(H)+1}{2}} = 2^{\frac n2}$ for any $b \in \F_2^n$, implying $k_{b,H} \geq 0$.
    Let $e_1, e_2 \in \F_2^n$ be linearly independent vectors, and let $H_1, H_2$ and $H_3$ be defined as above.
    Then for any nonzero $v \in \F_2^n$, we have
    \begin{align*}
    6 (m_{e_1}(v) + m_{e_2}(v)+m_{e_1+e_2}(v)) 
    &=\sum_{b \in \F_2^n} (-1)^{v \cdot b} (2^{k_{b,H_1}} + 2^{k_{b,H_2}} + 2^{k_{b,H_3}}).
    \end{align*}
    It is well-known that the restriction of a bent function to an affine hyperplane is plateaued with linearity $2^{\frac{n}{2}}$ (see \cite{CanteautCharpinDecomposing,CarletThornburghRestrictions}), implying $k_{b,H_i} = 0$ for any $b \in \bentcomps{F}$ and $i \in \set{1, 2,3}$.
    Then, $6 (m_{e_1}(v) + m_{e_2}(v)+m_{e_1+e_2}(v))$ equals
    \begin{equation}\label{eq:3-linearities-componentsF}
    3 \cdot \widehat{1_{\bentcomps{F}}}(v) + \sum_{b \in \F_2^n \setminus \bentcomps{F}} (-1)^{v \cdot b} (2^{k_{b,H_1}} + 2^{k_{b,H_2}} + 2^{k_{b,H_3}}),
    \end{equation}
    and it was shown in the proof of \cite[Proposition 4.14]{MihailaThornburgh2026} that any value in the image set of $\widehat{1_{\bentcomps{F}}}$ is congruent to $2$ modulo $4$.
    In particular, $3\cdot \widehat{1_{\bentcomps{F}}}(v) \equiv 6  \pmod {12}$.
    Therefore, $m_{e_1}(v) + m_{e_2}(v)+m_{e_1+e_2}(v)$ is even if and only if
    $\sum_{b \in \F_2^n \setminus \bentcomps{F}} (-1)^{v \cdot b} (2^{k_{b,H_1}} + 2^{k_{b,H_2}} + 2^{k_{b,H_3}}) \equiv 6 \pmod{12}$.
    
    Now, consider the case that $b\cdot F$ is not bent nor semi-bent, that is, the linearity of $b \cdot F$ is at least $2^{\frac{n}{2}+2}$.
    Then for any affine hyperplane $H$, we know $\lambda_{b,H} \geq 2^{\frac{n}{2}+1}$ because  a restriction of a plateaued function, of amplitude $\lambda$, to an affine hyperplane has amplitude equal to $\lambda$ or $\frac{\lambda}{2}$, see \cite{CarletThornburghRestrictions}.
    Hence, $k_{b,H} \geq 2$.
    This implies that $2^{k_{b,H_1}} + 2^{k_{b,H_2}} + 2^{k_{b,H_3}}$ is divisible by $4$, and moreover, since $k_{b,H_i}$ is even for all $i \in \set{1, 2,3}$, we have that $2^{k_{b,H_1}} + 2^{k_{b,H_2}} + 2^{k_{b,H_3}}$ is divisible by $3$.
    So, $2^{k_{b,H_1}} + 2^{k_{b,H_2}} + 2^{k_{b,H_3}}$ is divisible by $12$.
    The claim of the statement immediately follows.
\end{proof}

\subsection{Crooked functions in an odd number of variables}

\subsubsection{A relation between the values of $W_{\pi_F}$ and where bent and semi-bent component functions of $F|_H$ are located}
In this paragraph, it is shown that when $n$ is odd, the ortho-derivative of a crooked $(n,n)$-function $F$ is related to the bent components of $F$ restricted to a hyperplane.
Let $H= \set{0,e}^\perp$ for some nonzero $e \in \F_2^n$.
Let $f \colon \F_2^n \to \F_2$ be a partially-bent function, and recall the \textit{linear kernel} of $f$ is the linear subspace 
$
\mathcal{E}_f = \set{a \in \F_2^n : D_a f \text{ is constant}}.
$
It was shown in \cite[Remark 2]{CarletThornburghRestrictions} that $f|_H$ and $f|_{H^c}$ are plateaued with amplitude $\lambda$ (resp. $\frac{\lambda}{2}$) if $e \in \mathcal{E}_f^\perp$ (resp. $e \notin \mathcal{E}_f^\perp$).

\begin{proposition}\label{prop:bentcomps-crookedAB-Walsh}
    Let $n\geq3$ be odd, let $F \colon\F_2^n \to\F_2^n$ be a crooked function, and let $H \subseteq \F_2^n$ be an affine hyperplane with underlying vector space $\set{0,e}^\perp$.
    Then $F|_H$ has exactly $2^{n-1}$ bent components and $2^{n-1}-1$ semi-bent components.
    Moreover, for any $v \in \F_2^n \setminus \set{0}$, we have
     \[
     |\bentcomps{F|_H} \cap \set{0,v}^\perp|= 2^{n-2} - \frac{1}{4} W_{\pi_F}(e,v).
     \]
\end{proposition}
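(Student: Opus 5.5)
The plan is to identify the linear kernels of the components of $F$ through the ortho-derivative, and then read off bentness on $H$ from the amplitude-halving criterion recalled just before the statement. Since $n$ is odd and $F$ is crooked, $F$ is AB (as in the proof of \Cref{cor:crooked-AB-invertibleOrtho}). Every component of $F$ is partially-bent, because $F$ is strongly plateaued by \Cref{prop:strongplat-char}. So every nonzero component $b\cdot F$ has amplitude $\lambda=2^{\frac{n+1}{2}}$. For a partially-bent function, $\lambda^2=2^{n+\dim \mathcal{E}_{b\cdot F}}$, hence $\dim\mathcal{E}_{b\cdot F}=1$.

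The key step is to identify this kernel. For nonzero $a$, $D_a(b\cdot F)=b\cdot D_aF$ is constant if and only if $b$ is orthogonal to the underlying space $U_a=\set{0,\pi_F(a)}^\perp$ of $\im(D_aF)$. This happens if and only if $b\in\set{0,\pi_F(a)}$. So for nonzero $b$ we get $\mathcal{E}_{b\cdot F}=\set{0,a}$ exactly when $\pi_F(a)=b$. By \Cref{cor:crooked-AB-invertibleOrtho}, $\pi_F$ is a bijection, so $\mathcal{E}_{b\cdot F}=\set{0,\pi_F^{-1}(b)}$.

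Now apply \cite[Remark 2]{CarletThornburghRestrictions}, together with \Cref{rem:plateaued-restriction-mults} for the affine case, where $H$ and its complement have the same amplitude. The restriction $b\cdot F|_H$ has amplitude $\lambda$ if $e\in\mathcal{E}_{b\cdot F}^\perp$, that is, if $e\cdot\pi_F^{-1}(b)=0$. Otherwise it has amplitude $\lambda/2=2^{\frac{n-1}{2}}$. Since $\dim H=n-1$ is even, amplitude $\lambda/2$ means bent on $H$ and amplitude $\lambda$ means semi-bent on $H$. The zero component is neither. Hence
\[
\bentcomps{F|_H}=\pi_F(\set{a: e\cdot a=1}),
\]
which has $2^{n-1}$ elements, and the remaining $2^{n-1}-1$ nonzero $b$ give semi-bent components.

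For the last claim, I will count
\[
|\bentcomps{F|_H}\cap\set{0,v}^\perp|=\sum_{a\in\F_2^n}\frac{1-(-1)^{e\cdot a}}{2}\cdot\frac{1+(-1)^{v\cdot\pi_F(a)}}{2}.
\]
Expanding gives $\frac14\bigl(2^n+W_{\pi_F}(0,v)-0-W_{\pi_F}(e,v)\bigr)$. Here $W_{\pi_F}(0,v)=0$ because $v\cdot\pi_F$ is balanced ($\pi_F$ is invertible). This yields $2^{n-2}-\frac14W_{\pi_F}(e,v)$.

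The only delicate point is the kernel identification. It needs that $D_aF$ has affine image of codimension exactly one, which is given by crookedness. It also needs that $\pi_F$ is a bijection, so that distinct components have distinct kernels; this is given by \Cref{cor:crooked-AB-invertibleOrtho}.
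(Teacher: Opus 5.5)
Your proof is correct and follows essentially the same route as the paper. You identify $\mathcal{E}_{b\cdot F}=\set{0,\pi_F^{-1}(b)}$, apply the amplitude-halving criterion for restrictions to hyperplanes to get $\bentcomps{F|_H}=\supp(e\cdot\pi_F^{-1})$, and count using bijectivity of $\pi_F$. The differences are minor: you derive the kernel identification directly from the definition of $\pi_F$ where the paper cites \cite[Proposition 2]{Kyureghyan2007}, and you evaluate the final count as a character sum over the domain of $\pi_F$ rather than through $W_{\pi_F^{-1}}(v,e)=W_{\pi_F}(e,v)$.
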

\begin{proof}
From above, we know
$
\bentcomps{F|_H} = \set{b \in \F_2^n \setminus \set{0} : e \notin \mathcal{E}_{b \cdot F}^\perp }
$
since all component functions of $F$ have amplitude $2^{\frac{n+1}{2}}$.
Moreover, any non-bent component of $F|_H$ is semi-bent since $F$ is AB.
For any $b \neq0$, the linear kernel of $b \cdot F$ is equal to $\set{0,\pi_F^{-1}(b)}$ \cite[Proposition 2]{Kyureghyan2007}.
Hence, $\bentcomps{F|_H}=\supp(e \cdot \pi_F^{-1})$, which has size $\wt(e \cdot \pi_F^{-1}) = 2^{n-1}$ because $\pi_F^{-1}$ is a bijection by \Cref{cor:crooked-AB-invertibleOrtho}.
So, $F|_H$ has exactly $2^{n-1}$ bent components and $2^{n-1}-1$ semi-bent components.
Moreover, 
\begin{align*}
    |\bentcomps{F|_H} \cap \set{0,v}^\perp| &= \frac{|\supp(e \cdot \pi_F^{-1})|}{2} - \frac{W_{\pi_F^{-1}}(v,e)}{4} \\
    &= 2^{n-2} -\frac{W_{\pi_F^{-1}}(0,e) + W_{\pi_F^{-1}}(v,e)}{4} \\
    &= 2^{n-2} - \frac{1}{4} W_{\pi_F}(e,v).
\end{align*}
\end{proof}

We also establish the following facts on crooked AB functions.

\begin{proposition}
    Assume $n\geq3$ is odd.
    Let $F \colon \F_2^n \to \F_2^n$ be a crooked function, let $H \subseteq \F_2^n$ be an affine hyperplane, and let $S =\graph{F} \cap (H \times \F_2^n)$.
    Let $t \in H$ and let $v \in \F_2^n \setminus \set{0}$.
    Then $|\bentcomps{F|_H} \cap \set{0,v}^\perp| \leq \frac{2^n-2}{3}$ and $\frac{1}{2}|\bentcomps{F|_H} \cap \set{0,v}^\perp|$ has the opposite parity of $\mult_S(t,F(t)+v)$.
\end{proposition}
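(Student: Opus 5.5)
The plan is to compute the exclude multiplicity $\mult_S(t,F(t)+v)$ explicitly through the amplitudes of the components of $F|_H$, using \Cref{rem:plateaued-restriction-mults}, and then read off both claims from the resulting linear relation.

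First, I will invoke \Cref{rem:plateaued-restriction-mults}. Its hypotheses hold because $F$ is APN, hence differentially $2$-uniform on $H$. Moreover $F$ is strongly plateaued, so $F|_H$ is plateaued (see \Cref{prelim}). The remark gives, for $t\in H$,
\[
6\cdot 2^n\,\mult_S(t,F(t)+v)=\sum_{b\in\F_2^n}(-1)^{b\cdot v}\lambda_{b,H}^2 .
\]

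Second, I will split this sum into three parts using \Cref{prop:bentcomps-crookedAB-Walsh}. The term $b=0$ has $\lambda_{0,H}=|H|=2^{n-1}$. The set $\bentcomps{F|_H}$ has $2^{n-1}$ elements, each with $\lambda_{b,H}^2=2^{n-1}$. The remaining $2^{n-1}-1$ nonzero $b$ are semi-bent on $H$, each with $\lambda_{b,H}^2=2^{n+1}$.

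Write $N=|\bentcomps{F|_H}\cap\set{0,v}^\perp|$ and $\beta=\sum_{b\in\bentcomps{F|_H}}(-1)^{b\cdot v}=2N-2^{n-1}$. Since $v\neq 0$, we have $\sum_{b\neq 0}(-1)^{b\cdot v}=-1$, so the semi-bent part contributes $2^{n+1}(-1-\beta)$. The sum therefore becomes
\[
6\cdot 2^n\,\mult_S(t,F(t)+v)=2^{2n-2}-2^{n+1}-3\cdot 2^{n-1}\beta .
\]
Dividing by $2^{n-1}$ and substituting $\beta$ should simplify to
\[
6\,\mult_S(t,F(t)+v)=2^n-2-3N .
\]

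Both claims follow from this identity.
\begin{itemize}
\item Since $\mult_S(t,F(t)+v)\ge 0$, we get $N\le \frac{2^n-2}{3}$.
\item The identity shows $N$ is even, and
\[
\frac N2=\frac{2^{n-1}-1}{3}-\mult_S(t,F(t)+v).
\]
Because $n$ is odd, $n-1$ is even, so $3\mid 2^{n-1}-1$. The quotient $\frac{2^{n-1}-1}{3}$ is odd, since $2^{n-1}-1$ is odd. Hence $\frac N2\equiv 1+\mult_S(t,F(t)+v)\pmod 2$, which is the claimed opposite parity.
\end{itemize}

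The only delicate step is the amplitude bookkeeping. I need to make sure the amplitudes $2^{n-1}$ for bent and $2^{n+1}$ for semi-bent restrictions to the $(n-1)$-dimensional $H$ are stated as squared amplitudes. I also need the counts from \Cref{prop:bentcomps-crookedAB-Walsh} to apply to an affine hyperplane rather than a linear one. The remark guarantees the latter, because amplitudes on $H$ and $H^c$ agree.
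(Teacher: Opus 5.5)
Your proof is correct. The identity $6\,\mult_S(t,F(t)+v)=2^n-2-3N$ is exactly the relation $N=\frac{2^n-2}{3}-2\mult_S(t,F(t)+v)$ that the paper obtains, and your amplitude bookkeeping checks out.

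The route is different from the paper's. The paper uses the second statement of \Cref{prop:bentcomps-crookedAB-Walsh}, namely $N=2^{n-2}-\frac14 W_{\pi_F}(e,v)$, which rests on $\bentcomps{F|_H}=\supp(e\cdot\pi_F^{-1})$. It then substitutes $W_{\pi_F}(e,v)=8\mult_S(t,F(t)+v)-2\mult_{\graph F}(t,F(t)+v)+2$ from \Cref{thm:crooked-function-ortho-Walsh}, together with the AB value $\mult_{\graph F}(t,F(t)+v)=\frac{2^{n-1}-1}{3}$.

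You use only the first statement of \Cref{prop:bentcomps-crookedAB-Walsh}, i.e.\ the amplitude distribution of $F|_H$ ($2^{n-1}$ bent and $2^{n-1}-1$ semi-bent components). You then Fourier-invert it through \Cref{rem:plateaued-restriction-mults}, the same mechanism the paper uses for \Cref{4.17}.

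Your argument therefore bypasses the ortho-derivative and the AB exclude-multiplicity formula. It does not even need $\pi_F^{-1}$. Every nonzero component of $F|_H$ is bent or semi-bent, because AB amplitudes restrict to $\lambda$ or $\lambda/2$ on hyperplanes. The count $2^{n-1}$ then follows from the Sidon equality in \Cref{rem:amplitude-sums-Sidon-iff}.

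The paper's route instead presents the statement as a direct translation of the Walsh value $W_{\pi_F}(e,v)$. That fits that subsection's aim of linking $W_{\pi_F}$ to the bent components of $F|_H$.
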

\begin{proof}
    Let $\set{0,e}^\perp$ be the underlying vector space of $H$, and let $v \in \F_2^n \setminus \set{0}$.
    From \Cref{prop:bentcomps-crookedAB-Walsh}, we have
    \begin{align*}
         |\bentcomps{F|_H} \cap \set{0,v}^\perp|
         &= 2^{n-2} - \frac{1}{4} W_{\pi_F}(e,v)\\
         &=  2^{n-2} - \frac{1}{4}  \parens{8\mult_S(t,F(t)+v) - 2 \cdot \frac{2^n-2}{6}+2} \\
         &= \frac{2^n-2}{3} -  2\mult_S(t,F(t)+v),
    \end{align*}
    where the second equality holds by \Cref{thm:crooked-function-ortho-Walsh} and the plateauedness of crooked functions on hyperplanes (the latter condition allowing us to consider an arbitrary point $t \in H$, see \Cref{rem:plateaued-restriction-mults}).
    The claim that $|\bentcomps{F|_H} \cap \set{0,v}^\perp| \leq \frac{2^n-2}{3}$ also follows. 
    Since $\frac{1}{2}|\bentcomps{F|_H} \cap \set{0,v}^\perp|= \frac{2^n-2}{6} -  \mult_S(t,F(t)+v)$ and $\frac{2^n-2}{6}$ is odd, the last claim is also immediate.
\end{proof}

\subsubsection{A connection to a result of \cite{Abbondati2024}}\label{Piccione}
In this paragraph, we characterize the strong D-property (introduced in \cite{CarletPiccioneStrongDProperty}) of crooked functions and we observe a connection with \cite{Abbondati2024}.
Let us first introduce the following background.
An $(n,m)$-function $F$ is said to have the \textit{D-property} if 
\[
\set{F(x)+F(y)+F(z)+F(x+y+z) : x,y,z \in \F_2^n} = \F_2^m,
\]
or equivalently, the sums of $F$ over all 2-dimensional affine subspaces cover $\F_2^m \setminus \set{0}$.
The D-property is also equivalent to the condition that for all $c \in \F_2^m$, there exists $t \in \F_2^n$ such that three (not necessarily distinct) points in $\graph{F}$ have sum equal to $(t,F(t)+c)$.
Moreover, we say that $F$ has the \textit{strong D-property} if for any affine hyperplane, we have:
\[
\set{F(x)+F(y)+F(z)+F(x+y+z) : x,y,z \in H} = \F_2^m,
\]
and this definition was first introduced in \cite{CarletPiccioneStrongDProperty}. 
As with the D-property, we can translate this property as the fact that for every affine hyperplane and every $c \in \F_2^m$, there exists $t \in H$ such that three (not necessarily distinct) points in $\graph{F|_H}$ have sum equal to $(t,F(t)+c)$.
Assuming that $n=m$ and $F$ is crooked, we know from \Cref{thm:crooked-function-ortho-Walsh} that for any affine hyperplane $H$ and nonzero $c \in \F_2^n$, the restriction $F'=F|_H$ has the property that the value of $\mult_{\graph{F'}}(t,F(t)+c)$ does not depend on the value of $t \in H$.
Hence, we know that $F$ has the strong D-property if and only if the graph of $F|_H$ is a maximal Sidon set in $H \times \F_2^n$ for all affine hyperplanes $H$.
By \Cref{rem:plateaued-restriction-mults}, we then know that $F$ has the strong D-property if and only if for any affine hyperplane $H$, we have for all nonzero $c \in \F_2^n$ that $\sum_{v \in \F_2^n} (-1)^{v \cdot c} \lambda_{v,H}^2 > 0$ where $\lambda_{v,H}$ is the amplitude of $v \cdot F|_H$.
Fix $H$ to be some affine hyperplane.
Then, by \Cref{rem:amplitude-sums-Sidon-iff}, we know that $\sum_{v \in \F_2^n}\lambda_{v,H}^2 = 2^n (3 \cdot 2^{n-1}-2)$, implying
\[
\sum_{v \in \F_2^n} (-1)^{v \cdot c} \lambda_{v,H}^2 
= 2^n(3 \cdot 2^{n-1}-2) - 2\sum_{v \notin \set{0,c}^\perp} \lambda_{v,H}^2
\]
Under the assumption that $n$ is odd, we have by \Cref{prop:bentcomps-crookedAB-Walsh} that $F|_H$ has $2^{n-1}$ bent components and $2^{n-1}-1$ semi-bent components, implying the sum $\sum_{v \notin \set{0,c}^\perp} \lambda_{v,H}^2$ is equal to 
\begin{align*}
2^{n-1} (|\bentcomps{F|_H} \setminus \set{0,c}^\perp + 4|\semibentcomps{F|_H} \setminus \set{0,c}^\perp|) 
&= 2^{n-1}(2^{n-1} + 3|\semibentcomps{F|_H} \setminus \set{0,c}^\perp|)\\
&= 2^{n-1}(2^{n+1}-3-3|\semibentcomps{F|_H} \cap \set{0,c}^\perp|).
\end{align*}

From the above, it then follows that a crooked function $F$, for $n$ odd, has the strong D-property if and only if for any affine hyperplane $H$ and any nonzero $c$, we have $2^{n-1}(2^{n+1}-3-3|\semibentcomps{F|_H} \cap \set{0,c}^\perp|)<2^{n-1}(3 \cdot 2^{n-1}-2)$, that is, $|\semibentcomps{F|_H} \cap \set{0,c}^\perp| > \frac{2^{n-1}-1}{3}$. 
More precisely:
\begin{proposition}\label{4.17}
Let $n$ be any odd integer, $F$ any crooked $(n,n)$-function and $H$ any affine hyperplane of $\F_2^n$. Then $F|_H$ has the D-property if and only if, for any nonzero $c$, we have $|\semibentcomps{F|_H} \cap \set{0,c}^\perp| > \frac{2^{n-1}-1}{3}$, where $\semibentcomps{F|_H} $ denotes the set of semi-bent components of $F$.
\end{proposition}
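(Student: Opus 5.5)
We follow the chain of equivalences sketched in the paragraph preceding the statement, organized around one fixed affine hyperplane $H$. The first step is to reduce the D-property of $F|_H$ to a positivity condition on exclude multiplicities. By definition, $F|_H$ has the D-property if and only if every $c\in\F_2^n$ is a sum $F(x)+F(y)+F(z)+F(x+y+z)$ with $x,y,z\in H$. The value $c=0$ is always attained, by taking $x=y$. For $c\neq 0$ the four points are necessarily distinct, since a repetition forces the sum to be $0$. Hence attaining $c$ amounts to $\mult_{S}(t,F(t)+c)>0$ for some $t\in H$, where $S=\graph{F|_H}$; here we use that $S$ is Sidon, because $F$ is APN. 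By \Cref{rem:plateaued-restriction-mults} this multiplicity does not depend on $t\in H$. That remark applies because crooked functions are strongly plateaued and therefore plateaued on hyperplanes (the \cite{CarletThornburghRestrictions} result recalled in \Cref{prelim}). The same remark gives
\[
\mult_S(t,F(t)+c)=\frac{1}{6\cdot 2^n}\sum_{v\in\F_2^n}(-1)^{v\cdot c}\lambda_{v,H}^2 .
\]
So the D-property is equivalent to $\sum_{v}(-1)^{v\cdot c}\lambda_{v,H}^2>0$ for all nonzero $c$.

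The second step is to evaluate this sum. Since $S$ is Sidon, \Cref{rem:amplitude-sums-Sidon-iff} gives $\sum_v\lambda_{v,H}^2=2^n(3\cdot 2^{n-1}-2)$. Consequently
\[
\sum_v(-1)^{v\cdot c}\lambda_{v,H}^2=2^n(3\cdot 2^{n-1}-2)-2\sum_{v\notin\{0,c\}^\perp}\lambda_{v,H}^2 .
\]
We then use \Cref{prop:bentcomps-crookedAB-Walsh}: for $n$ odd, $F|_H$ has exactly $2^{n-1}$ bent components, each with $\lambda^2=2^{n-1}$, and the remaining $2^{n-1}-1$ nonzero components are semi-bent, each with $\lambda^2=2^{n+1}$.

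The third step is a counting argument. We count $\bentcomps{F|_H}\setminus\{0,c\}^\perp$ and $\semibentcomps{F|_H}\setminus\{0,c\}^\perp$, noting that $0$ lies in the hyperplane $\{0,c\}^\perp$ and that the complement of that hyperplane has $2^{n-1}$ elements. This yields
\[
\sum_{v\notin\{0,c\}^\perp}\lambda_{v,H}^2=2^{n-1}\bigl(2^{n-1}+3|\semibentcomps{F|_H}\setminus\{0,c\}^\perp|\bigr).
\]
The semi-bent count off the hyperplane is $2^{n-1}-1-|\semibentcomps{F|_H}\cap\{0,c\}^\perp|$. Substituting gives $2^{n-1}(2^{n+1}-3-3|\semibentcomps{F|_H}\cap\{0,c\}^\perp|)$. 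Plugging this into the positivity condition and dividing by $2^n$ reduces it to
\[
3\cdot 2^{n-1}-2>2^{n+1}-3-3|\semibentcomps{F|_H}\cap\{0,c\}^\perp| ,
\]
that is, $|\semibentcomps{F|_H}\cap\{0,c\}^\perp|>\frac{2^{n-1}-1}{3}$.

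No step is deep. The main point needing care is the first reduction: we must handle the case $c=0$ separately, and we must justify that $\mult_S(t,F(t)+c)$ is independent of $t$. That independence is exactly what lets the condition "for some $t$" become a single Fourier expression.
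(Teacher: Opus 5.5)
Your proposal is correct and follows essentially the same chain as the paper's argument. You reduce the D-property of $F|_H$ to positivity of $\sum_{v}(-1)^{v\cdot c}\lambda_{v,H}^2$ via \Cref{rem:plateaued-restriction-mults}, then apply \Cref{rem:amplitude-sums-Sidon-iff} and the bent/semi-bent count of \Cref{prop:bentcomps-crookedAB-Walsh}. The differences are only cosmetic: you get the $t$-independence of the multiplicity directly from \Cref{rem:plateaued-restriction-mults} rather than via \Cref{thm:crooked-function-ortho-Walsh}, and you replace the paper's ``maximal Sidon set'' reformulation with an explicit treatment of $c=0$ and of the distinctness of the four points.
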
 

It was shown in \cite{Abbondati2024} that if $n$ is odd and $F$ is strongly plateaued as well as the restriction $F|_H$ of $F$ to a (linear) hyperplane $H$, then $F|_H$ has the D-property if and only if $|\semibentcomps{F|_H} \cap \set{0,c}^\perp| > \frac{2^{n-1}-1}{3}$.
Proposition \ref{4.17} shows that we can obtain the same result on a crooked function $F$ (instead of considering a single restriction of $F$ to an affine hyperplane, we are considering all such restrictions) without assuming its restriction to $H$ is strongly plateaued. 
The fact of thus easing the hypothesis is not benign because we have shown in \cite{CarletThornburghRestrictions} that if a strongly plateaued function is strongly plateaued on every hyperplane, then if $n \geq 4$ it is quadratic (and hence, if $F$ is APN and is strongly plateaued on every affine hyperplane, it is quadratic).
We have then weakened the hypothesis of \cite{Abbondati2024} when considering all restrictions of $F$ to affine hyperplanes.
And it is important to determine which results on quadratic APN functions generalize to crooked functions since the question of whether a non-quadratic crooked function can exist remains open.

\section{Exclude parity adjoints of crooked functions}

In the previous section, we proved a general result that proved \Cref{conj:Gorodilova} as a corollary.
In particular, we defined Boolean functions, which we called \textit{exclude parity functions}, that are associated to any APN function, and we studied them in the particular cases of crooked and quadratic APN functions.
If $n \geq 4$ and $F \colon \F_2^n \to \F_2^n$ is a crooked function, we know that for any nonzero $v \in\F_2^n$, the equality $\deg_{alg}(v \cdot \pi_F)=n-2$ holds if and only if $\deg_{alg}(f_v)=2$, where the exclude parity function $f_v$ is defined by
\[
f_v(u) = (m_u(v) \mod 2) = (\mult_{\graph{F} \cap (\set{0,u}^\perp \times \F_2^n)}(0,F(0)+v) \mod 2),
\]
and \Cref{cor:more-quadratic-cpts-closer-to-conjecture} implies the latter condition always holds when $v \neq 0$ and $F$ is quadratic.
In this section, we prove that when $F$ is crooked, these exclude parity functions are described by the component functions of an $(n,n)$-function.

\begin{definition}  
Assume $n \geq 4$, and let $F \colon \F_2^n \to \F_2^n$ be an APN function.
A vectorial Boolean function $\varepsilon_F \colon \F_2^n \to \F_2^n$ is called the \textit{exclude parity adjoint} of $F$ if
\[
 v \cdot \varepsilon_F(u) = f_v(u) \oplus 1
 \]
for all nonzero $u,v \in \F_2^n$.
\end{definition}

Note that if $n \geq 3$ and $F \colon \F_2^n \to \F_2^n$ is a plateaued APN function with an exclude parity adjoint, then $\varepsilon_F(0)=0$ since $f_v(0) \oplus 1 = 0$ for all nonzero $v \in \F_2^n$ by \Cref{prop:plateaued-oddmults}.

The main result of this section is that every crooked $(n,n)$-function, where $n \geq 4$, has an exclude parity adjoint.
The property of having an exclude parity adjoint is very strong as it is equivalent to the property that for all nonzero $u \in \F_2^n$, the set 
\begin{equation}\label{affine-hyperplane-or-empty}
\set{v \in \F_2^n \setminus \set{0} : \mult_{\graph F \cap (\set{0,u}^\perp \times \F_2^n)}(0,F(0)+v) \text{ is even}}
\end{equation}
is an affine hyperplane or empty.
In particular, we shall show that in the case of $F$ being quadratic APN, the above set is always the complement of a linear hyperplane (which is equivalent to the property that $\varepsilon_F(u)=0$ if and only if $u=0$, and this is also equivalent to $F$ satisfying \Cref{conj:Gorodilova}).

We separate our proof into multiple lemmas. 
First, we characterize when $\varepsilon_F$ exists in terms of a condition on the preimages of $2$-dimensional linear subspaces under $\pi_F$, where $F$ is a crooked $(n,n)$-function and $n \geq 4$.
Then, we will prove the case when $n$ is even by a simple argument.
However, for the case when $n$ is odd, we will require two lemmas relating the existence of $\varepsilon_F$ to the algebraic degree of some Boolean functions associated to the restrictions of $F$ to affine hyperplanes as well as the algebraic degree of $\pi_F^{-1}$.

\begin{lemma}\label{lem:parity-adj-equiv}
    Assume $n \geq 4$, and let $F \colon \F_2^n \to \F_2^n$ be a crooked function. 
    Then $F$ has an exclude parity adjoint if and only if for any 2-dimensional linear subspace $V$ and any nonzero $u \in \F_2^n$, the set
    $\pi_F^{-1}(V^\perp) \setminus \set{0,u}^\perp$ has even size.
\end{lemma}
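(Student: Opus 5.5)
The plan is to turn the existence of $\varepsilon_F$ into a parity condition on sums of the $m_u(v)$ over $2$-dimensional subspaces of $v$'s. Then I convert that parity into a count on $\pi_F^{-1}(V^\perp)$ using \Cref{thm:crooked-function-ortho-Walsh} together with character sums over $V$.

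\textbf{Step 1 (reformulation).} Fix a nonzero $u$ and set $g_u(v)=f_v(u)\oplus 1$ for $v\neq 0$, with $g_u(0)=0$. Then $\varepsilon_F$ exists if and only if, for every nonzero $u$, the function $g_u$ is linear in $v$. Indeed, we can then define $\varepsilon_F(u)$ as the vector representing $g_u$, and put $\varepsilon_F(0)=0$, which is forced anyway by \Cref{prop:plateaued-oddmults}. A Boolean function vanishing at $0$ is linear exactly when its sum over every $2$-dimensional linear subspace $V=\langle v_1,v_2\rangle$ is $0$. Since $g_u$ is $1\oplus f$ on each of the three nonzero points of $V$, this means $f_{v_1}(u)\oplus f_{v_2}(u)\oplus f_{v_1+v_2}(u)=1$. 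Equivalently, $\Sigma_u(V):=m_u(v_1)+m_u(v_2)+m_u(v_1+v_2)$ is odd for every such $V$ and every nonzero $u$.

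\textbf{Step 2 (character sums over $V$).} Put $P=\pi_F^{-1}(V^\perp)$ and $N=|P\setminus\set{0,u}^\perp|$. Since $\sum_{v\in V}(-1)^{v\cdot y}=4\cdot 1_{V^\perp}(y)$, we have
\[
\sum_{v\in V}W_{\pi_F}(u,v)=4\sum_{x\in P}(-1)^{u\cdot x}=4|P|-8N .
\]
For $u\neq 0$ the $v=0$ term vanishes. By \Cref{thm:crooked-function-ortho-Walsh} with $t=0$, the other three terms sum to $8\Sigma_u(V)-2\Sigma(V)+6$, where $\Sigma(V)=m(v_1)+m(v_2)+m(v_1+v_2)$. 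Taking $u=0$ instead gives
\[
4|P|=2^n+\sum_{v\in V\setminus\set{0}}\bigl(6m(v)+2-2^n\bigr)=6\Sigma(V)+6-2^{n+1}.
\]
Hence $2\Sigma(V)=\bigl(4|P|-6+2^{n+1}\bigr)/3$.

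\textbf{Step 3 (parity).} Substituting the expression for $2\Sigma(V)$ into $8\Sigma_u(V)=4|P|-8N-6+2\Sigma(V)$ and clearing denominators gives
\[
3\Sigma_u(V)=2|P|-3N-3+2^{n-2}.
\]
Since $n\geq 4$, the terms $2^{n-2}$ and $2|P|$ are even, so $\Sigma_u(V)\equiv N+1\pmod 2$. Thus $\Sigma_u(V)$ is odd if and only if $|\pi_F^{-1}(V^\perp)\setminus\set{0,u}^\perp|$ is even. Combined with Step 1, this is exactly the claimed equivalence: as $(v_1,v_2)$ ranges over bases of all $2$-dimensional subspaces, $V$ ranges over all of them.

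The only delicate point is the bookkeeping in Step 2. One must use the $u=0$ case of \Cref{thm:crooked-function-ortho-Walsh} to eliminate $\Sigma(V)$. Otherwise one would need $m(v)$ modulo $8$, whereas \Cref{prop:plateaued-oddmults} only gives its parity. One must also check that the hypothesis $n\geq 4$ is what kills the $2^{n-2}$ term modulo $2$.
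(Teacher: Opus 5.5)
Your proof is correct and follows the paper's argument: you reformulate the existence of $\varepsilon_F$ as oddness of $m_u(v_1)+m_u(v_2)+m_u(v_1+v_2)$, and you sum \Cref{thm:crooked-function-ortho-Walsh} over $v\in V$ using $\sum_{v\in V}(-1)^{v\cdot y}=4\cdot 1_{V^\perp}(y)$. The one difference is that the paper disposes of the $m(v)$ terms by invoking \Cref{prop:plateaued-oddmults} (each $m(v)$ is odd), whereas your $u=0$ identity gives $3\Sigma(V)=2|P|-3+2^n$, so $\Sigma(V)$ is odd automatically and that external result is not needed (and your final parity step actually only requires $n\geq 3$ to make $2^{n-2}$ even).
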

\begin{proof}
    Let $u \in \F_2^n$ be nonzero, and let $S = \graph{F'}$, where $F' = F|_{\set{0,u}^\perp}$.
    Let $\mathcal{S}_v$ be the set of 2-dimensional linear subspaces $E \subseteq \set{0,u}^\perp$ such that $\sum_{x \in E} F(x) = v$.
    Note that $|\mathcal{S}_v|$ is equal to $m_u(v) = \mult_{\graph{F'}}(0,F(0)+v)$, and so $f_v(u) \equiv |\mathcal{S}_v| \pmod 2$.

    Let $V = \langle v_1, v_2 \rangle$ be an arbitrary 2-dimensional linear subspace of $\F_2^n$, and let $N_V$ be the number of 2-dimensional linear subspaces $E$ contained in $\set{0,u}^\perp$ such that $\sum_{x \in E} F(x) \in V \setminus \set{0}$.
    Let us show that $N_V$ satisfies the relation $N_V+1 \equiv |\pi_F^{-1}(V^\perp) \setminus \set{0,u}^\perp| \pmod 2$.
    First, note that $N_V = m_u(v_1)+m_u(v_2)+m_u(v_1+v_2)$.
    Also, by \Cref{thm:crooked-function-ortho-Walsh}, we have $W_{v \cdot \pi_F}(u)-W_{v \cdot \pi_F}(0) = 8m_u(v)-8m(v)+2^n$ for any $v \in \F_2^n \setminus \set{0}$, implying 
    \[
    m_u(v) + 1 \equiv \frac{W_{v \cdot \pi_F}(u)-W_{v \cdot \pi_F}(0)}{8} \pmod 2.
    \]
    since $m(v)$ is odd by \Cref{prop:plateaued-oddmults} and $n \geq 4$.
    Then $N_V+1 \equiv \frac{1}{8}\sum_{v \in V \setminus \set{0}} (W_{v \cdot \pi_F}(u)-W_{v \cdot \pi_F}(0)) \pmod 2$.
    For any $t \in \F_2^n$, we have 
    \[
    \sum_{v \in V \setminus \set{0}} W_{v \cdot \pi_F}(t) 
    =\sum_{x \in \F_2^n} (-1)^{x \cdot t}\sum_{v \in V \setminus \set{0}} (-1)^{v \cdot \pi_F(x)}
    = \sum_{x \in \F_2^n} (-1)^{x \cdot t} \parens{4 \cdot 1_{V^\perp}(\pi_F(x)) -1}.
    \]
    In particular, we have
    \begin{align*}
        \sum_{v \in V \setminus \set{0}}W_{v \cdot \pi_F}(u) &=  4\sum_{x \in \F_2^n} (-1)^{x \cdot u} 1_{V^\perp}(\pi_F(x)), \\
        \sum_{v \in V \setminus \set{0}} W_{v \cdot \pi_F}(0) &=4 \sum_{x \in \F_2^n} 1_{V^\perp}(\pi_F(x)) - 2^n
    \end{align*}
    with the first equality holding because $u \neq 0$.
    Therefore, 
    \begin{align*}
        \sum_{v \in V \setminus \set{0}} (W_{v \cdot \pi_F}(u)-W_{v \cdot \pi_F}(0))  
        &= 4 \sum_{x \in \F_2^n, \pi_F(x) \in V^\perp}\parens{(-1)^{u \cdot x}-1}+2^n \\
        &= -8|\set{x \in \F_2^n : \pi_F(x) \in V^\perp, u \cdot x = 1}| +2^n.
    \end{align*}
    Since $n \geq 4$, we then have $N_V +1 \equiv |\set{x \in \F_2^n : \pi_F(x) \in V^\perp, u \cdot x = 1}| \pmod 2$.
    In other words, $N_V+1 \equiv |\pi_F^{-1}(V^\perp) \setminus \set{0,u}^\perp| \pmod 2$.
  
    Now, note that $N_V$ is odd if and only if we have $m_u(v_1) + m_u(v_2) + m_u(v_1+v_2) \equiv 1 \pmod 2$, which is the same as $f_{v_1}(u) \oplus f_{v_2}(u) \oplus f_{v_1+v_2}(u)=1$.
    Consequently, $N_V$ is odd for all $2$-dimensional linear subspaces $V$ if and only if $v \mapsto f_v(u) \oplus 1$ is a linear map (for notational convenience, we consider $f_0$ to be identically one).
    This latter condition is then equivalent to the condition that there exists a unique point $e_u \in \F_2^n$ such that $v \cdot e_u = f_v(u) \oplus 1$ for all nonzero $v \in \F_2^n$. 
    That is, there exists a vectorial function $\varepsilon_F \colon \F_2^n \to \F_2^n$ such that $\varepsilon_F(0)=0$ and whose component functions are $v \cdot \varepsilon_F = f_v \oplus 1$.
    The equivalence is then proven.
\end{proof}

We now require the two following lemmas, which we will use in the case of $n\geq 5$ being odd.
\begin{lemma}\label{lem:indbentcpts-upperbound}
    Assume $n$ is even, and let $F \colon \F_2^n \to \F_2^m$ be plateaued.
    Then $\deg_{alg}(1_{\bentcomps{F}}) \leq \min\set{m,\frac{n}{2}}$.
\end{lemma}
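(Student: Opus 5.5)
The bound $\deg_{alg}(1_{\bentcomps{F}}) \leq m$ is immediate, since $1_{\bentcomps{F}}$ is a Boolean function on $\F_2^m$. The work is the bound $\frac{n}{2}$. The plan is to express $1_{\bentcomps{F}}$ modulo $2$ as a normalized exponential sum in $b$, and then bound its algebraic degree by a $2$-adic divisibility argument on Fourier coefficients. One divisibility step is not yet proved; it is identified below.

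\textbf{Step 1: an integer-valued function that agrees with $1_{\bentcomps{F}}$ modulo $2$.} For each $b \in \F_2^m$ let $\lambda_b$ be the amplitude of $b \cdot F$. Since $b\cdot F$ is plateaued, $W_{b\cdot F}^3 = \lambda_b^2 W_{b \cdot F}$. Summing over $u$ and using $\sum_u W_{b\cdot F}(u) = 2^n(-1)^{b\cdot F(0)}$ gives
\[
\frac{\lambda_b^2}{2^n} = \pm 2^{-2n}\sum_{u} W_{b\cdot F}^3(u) = \pm 2^{-n} \sum_{x,y \in \F_2^n} (-1)^{b \cdot \beta_F(x,y)},
\]
where $\beta_F(x,y) = F(0)+F(x)+F(y)+F(x+y)$. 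The second equality expands $W^3$ as a triple sum and uses $\sum_u (-1)^{u\cdot(x_1+x_2+x_3)} = 2^n 1_{\set{0}}(x_1+x_2+x_3)$.

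Because $n$ is even, the quotient $\lambda_b^2/2^n$ is an integer. It equals $1$ exactly when $b \cdot F$ is bent, and otherwise it is a power of $4$ that is at least $4$; this includes $b=0$, where $\lambda_0 = 2^n$. The sign is irrelevant modulo $2$, so
\[
1_{\bentcomps{F}}(b) \equiv g(b) := 2^{-n}\sum_{c \in \F_2^m} N(c)(-1)^{b\cdot c} \pmod 2,
\]
where $N(c) = |\set{(x,y) : \beta_F(x,y) = c}|$.

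\textbf{Step 2: reduce the degree bound to a divisibility condition on $N$.} Over $\Z$, expand $(-1)^{b \cdot c} = \prod_i (1-2b_ic_i)$. The coefficient of the monomial $b^I = \prod_{i \in I} b_i$ in $g$ is then $(-2)^{|I|}2^{-n}\sum_c N(c)c^I$. Reducing modulo $2$ gives the ANF of $1_{\bentcomps{F}}$. Hence the monomial $b^I$ is absent whenever
\[
\sum_c N(c)\, c^I = \bigl|\set{(x,y) : \beta_F(x,y)_i = 1 \ \forall i \in I}\bigr| \equiv 0 \pmod{2^{\,n-|I|+1}}.
\]
An equivalent formulation, which may be easier to work with, is the parity of $g$ over affine subspaces. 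One would show that $\sum_{b \in b_0+V} g(b) = 2^{\dim V - n}\sum_{\beta_F(x,y) \in V^\perp}(-1)^{b_0\cdot \beta_F(x,y)}$ is even whenever $\dim V > \frac n2$.

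\textbf{Step 3: the divisibility (main obstacle).} The count in Step 2 must be shown divisible by $2^{n-|I|+1}$ for every $I$ with $|I| > \frac{n}{2}$. Note that $n-|I|+1 \leq \frac{n}{2}$ in that range.

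The first idea is to use the symmetries of $\beta_F$: it is symmetric, and it satisfies $\beta_F(x,y)=\beta_F(x,x+y)$. Consequently the pairs $(x,y)$ with $\set{0,x,y,x+y}$ a $2$-dimensional subspace come in orbits of size $6$. The degenerate pairs number $3\cdot 2^n - 2$, all with value $\beta_F = 0$, and they do not meet the condition $c^I = 1$ once $I \neq \emptyset$. This alone only yields a factor of $2$, so a stronger source of $2$-divisibility is needed.

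The plan for that is to bring plateauedness in a second time. I would rewrite the count restricted to the coordinate subspace determined by $I$ as a sum over $b$ in that subspace of $\lambda_b^2$-type quantities. Each such quantity is a power of $2$ that is at least $2^n$, because every $\lambda_b \geq 2^{n/2}$. The hope is that the resulting factor $2^{n}\cdot 2^{-(m-|I|)}$-type bookkeeping supplies the required $2^{n-|I|+1}$ exactly when $|I| > \frac n2$. If this bookkeeping fails, I would fall back on the standard fact that the ANF coefficients of $1_{\bentcomps{F}}$ are governed by the $2$-adic valuations of the Walsh values $W_{b\cdot F}(u) \in \set{0,\pm\lambda_b}$ with $\lambda_b \geq 2^{n/2}$, and argue as in the proof of the classical bound $\deg \leq \frac n2$ for bent functions.
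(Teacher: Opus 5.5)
Your Steps 1 and 2 are correct, but they only reformulate the lemma. Step 3, which you leave open, is the entire content of the statement, and the plan you sketch for it is circular.

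Because every $\lambda_b$ is a power of $2$ with $\lambda_b\ge 2^{n/2}$, the function $g$ is integer-valued. So the coefficient $(-2)^{|I|}2^{-n}\sum_c N(c)c^I$ of $b^I$ in its numerical normal form is an integer $\mu_I$. Divisibility of $\sum_c N(c)c^I$ by $2^{n-|I|}$ is therefore automatic, and the one extra factor of $2$ you need is the statement that $\mu_I$ is even. But
\[
\mu_I\equiv\sum_{b\in V_I} g(b)\equiv |\bentcomps{F}\cap V_I| \pmod 2, \qquad V_I=\set{b:\supp(b)\subseteq I},
\]
so that extra bit is exactly the claim that a coordinate subspace of dimension $|I|>\frac n2$ contains an even number of bent components. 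Rewriting the count as a sum of $\lambda_b^2$-type quantities over that subspace, as you propose, just returns you to this. The bound $\lambda_b\ge 2^{n/2}$ is precisely what produces $2^{n-|I|}$ and nothing more.

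Your other two ideas do not close the gap either. The orbit-of-size-$6$ observation gives a factor of $2$ independent of $|I|$, whereas you need $2^{n-|I|+1}$, which can be as large as $2^{n/2}$. The fallback to the classical bound for bent functions is not an argument: that bound concerns the degree in $x$ of one function, not the degree in $b$ of $1_{\bentcomps{F}}$.

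The missing idea is to use the Walsh values to the first power rather than $\lambda_b^2$. The quantity $\lambda_b^2$ is quadratic in the Walsh values, and the normalisation $2^{-n}$ over $2^{2n}$ pairs $(x,y)$ absorbs exactly the bit you need; with first powers the normalisation is only $2^{-n/2}$, which is where the threshold $\frac n2$ comes from. For an affine subspace $b_0+V\subseteq\F_2^m$ with $\dim V=k\ge\frac n2+1$,
\[
\sum_{b\in b_0+V} W_{b\cdot F}(0)=2^k\sum_{x\,:\,F(x)\in V^\perp}(-1)^{b_0\cdot F(x)}\equiv 0 \pmod{2^{\frac n2+1}}.
\]
The terms behave as follows:
\begin{itemize}
\item each bent $b$ contributes $\pm 2^{n/2}$;
\item each non-bent $b\neq 0$ contributes $0$ or $\pm\lambda_b$, with $\lambda_b\ge 2^{\frac n2+1}$;
\item $b=0$, if present, contributes $2^n$, which is also divisible by $2^{\frac n2+1}$.
\end{itemize}
Dividing by $2^{n/2}$ shows that $|\bentcomps{F}\cap(b_0+V)|$ is even. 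Hence $1_{\bentcomps{F}}$ has zero sum on every affine subspace of dimension $\frac n2+1$, i.e.\ $\deg_{alg}(1_{\bentcomps{F}})\le\frac n2$.

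This Nyberg-type argument is the content of Proposition~3.1 of \cite{beneteau2025walshspectraquadraticapn}, which the paper simply invokes. The paper uses it in the equivalent form that every linear subspace of dimension at least $\frac n2+1$ contains an odd number of nonzero non-bent components.
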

\begin{proof}
    By \cite[Proposition 3.1]{beneteau2025walshspectraquadraticapn} (which is stated for quadratic functions, but only uses the properties of plateaued functions), for any subspace $W \subseteq \F_2^n$ of dimension at least $\frac{n}{2}+1$, the intersection $|W \cap (\F_2^n \setminus (\bentcomps{F} \cup \set{0}))|$ is odd.
    Equivalently, $\bentcomps{F}$ intersects any $(\frac{n}{2}+1)$-dimensional linear subspace in an even number of points, and so $\deg_{alg}(1_{\bentcomps{F}}) \leq \frac{n}{2}$.
    The upper bound $\deg_{alg}(1_{\bentcomps{F}}) \leq m$ also holds since $1_{\bentcomps{F}}$ is an $m$-variable Boolean function.
\end{proof}

We now prove the following lemma, which establishes the equivalence of the existence of an exclude parity adjoint function of a crooked function $F \colon \F_2^n \to \F_2^n$, where $n \geq 5$ is odd, with the condition that the algebraic degree of $\pi_F^{-1}$ is bounded above by $n-3$.
Indeed, we will confirm that the latter condition always holds in the proof of \Cref{thm:parityadjoint}.

\begin{lemma}\label{lem:excludeparity-exists-iff-piFinv}
    Assume $n \geq 5$ is odd, and let $F \colon \F_2^n \to \F_2^n$ be a crooked function.
    Then $F$ has an exclude parity adjoint if and only if $\deg_{alg}(\pi_F^{-1}) \leq n-3$.
\end{lemma}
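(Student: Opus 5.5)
By \Cref{lem:parity-adj-equiv}, I would translate both sides of the equivalence into parity conditions on restrictions of the components of $G:=\pi_F^{-1}$ to subspaces, and then compare them.

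\emph{Setting up.} Since $n$ is odd, $\pi_F$ is a bijection with $\pi_F(0)=0$ by \Cref{cor:crooked-AB-invertibleOrtho}. So $G$ is well defined and $G(0)=0$. For a $2$-dimensional linear subspace $V$, put $W=V^\perp$, which has dimension $n-2$. Then $\pi_F^{-1}(V^\perp)=G(W)$, and since $G$ is injective,
\[
|\pi_F^{-1}(V^\perp)\setminus\{0,u\}^\perp| = |\{y\in W : u\cdot G(y)=1\}| = \wt\big((u\cdot G)|_W\big).
\]
As $V$ ranges over all $2$-dimensional linear subspaces, $W$ ranges over all $(n-2)$-dimensional linear subspaces. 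Hence, by \Cref{lem:parity-adj-equiv}, $F$ has an exclude parity adjoint if and only if, for every nonzero $u$, the component $u\cdot G$ has even weight on every linear subspace of codimension $2$.

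\emph{Reducing degree to a parity condition.} I would use the standard criterion already used in the proof of \Cref{lem:algdeg-lemma}: $\deg_{alg}(u\cdot G)\le n-3$ if and only if $u\cdot G$ has even weight on every $(n-2)$-dimensional \emph{affine} subspace. Also, $\deg_{alg}(G)$ is the maximum of $\deg_{alg}(u\cdot G)$ over nonzero $u$. The direction ``$\deg_{alg}(\pi_F^{-1})\le n-3 \Rightarrow$ adjoint exists'' is then immediate, because linear subspaces are in particular affine.

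\emph{The converse: passing from linear to affine subspaces.} This is the main step. Fix nonzero $u$, put $g=u\cdot G$, and assume $g$ has even weight on every linear codimension-$2$ subspace. Let $t+W$ be an affine codimension-$2$ subspace with $t\notin W$, and set $W'=W\cup(t+W)$, a linear hyperplane. Then $\wt(g|_{t+W})=\wt(g|_{W'})-\wt(g|_W)$, so it suffices to show that $g$ has even weight on every linear hyperplane $W'$.

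For this, I would choose a codimension-$3$ subspace $X\subseteq W'$ and let $W_1,W_2,W_3$ be the three codimension-$2$ subspaces with $X\subset W_i\subset W'$. Each point of $X$ lies in all three $W_i$, and each point of $W'\setminus X$ lies in exactly one. Hence $1_{W'}\equiv 1_{W_1}+1_{W_2}+1_{W_3}\pmod 2$. Multiplying by $g$ and summing gives $\wt(g|_{W'})\equiv\sum_i \wt(g|_{W_i})\equiv 0 \pmod 2$. Here $n\ge5$ guarantees that $X$ has dimension $n-3\ge 2$, so the construction is available.

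Consequently $g$ has even weight on every affine codimension-$2$ subspace, so $\deg_{alg}(g)\le n-3$ for every nonzero $u$, that is, $\deg_{alg}(\pi_F^{-1})\le n-3$.

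The only delicate points are the bijectivity of $\pi_F$ (needed to identify $\pi_F^{-1}(V^\perp)$ with $G(V^\perp)$ without multiplicities), and the passage from linear to affine subspaces just described.
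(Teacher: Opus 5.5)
Your proposal is correct and follows the paper's proof. Both arguments reduce via \Cref{lem:parity-adj-equiv}, use the bijectivity of $\pi_F$ to rewrite the count as the weight of $u\cdot\pi_F^{-1}$ on the $(n-2)$-dimensional linear subspace $V^\perp$, and then invoke the zero-sum characterization of algebraic degree. The only difference is how the passage from linear to affine subspaces is justified: the paper cites the ANF-coefficient formula (Relation (2.4) of Carlet's book), while you prove it directly from $1_{W'}\equiv 1_{W_1}+1_{W_2}+1_{W_3}\pmod 2$ and $\wt(g|_{t+W})=\wt(g|_{W'})-\wt(g|_W)$. Your version is a valid self-contained replacement, and that step actually only needs $n\ge 3$.
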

\begin{proof}
    We saw in \Cref{lem:parity-adj-equiv} that $F$ has an exclude parity adjoint if and only if for any 2-dimensional linear subspace $V$ and any nonzero $u \in \F_2^n$, the value of $|\pi_F^{-1}(V^\perp) \setminus \set{0,u}^\perp|$ is even.
    Since $n$ is odd, we know that $\pi_F$ is invertible (see \Cref{cor:crooked-AB-invertibleOrtho}). 
    Hence, 
    \[
    |\pi_F^{-1}(V^\perp) \setminus \set{0,u}^\perp| = \sum_{w \in V^\perp \setminus \set{0}} u \cdot \pi_F^{-1}(w) = \sum_{w \in V^\perp} u \cdot \pi_F^{-1}(w).
    \]
    Therefore, $F$ has an exclude parity adjoint if and only if every component function of $\pi_F^{-1}$ has even weight on any $(n-2)$-dimensional linear subspace.
    Recall that the algebraic degree of a Boolean function is at most $k$ if and only if it has zero-sum on every $(k+1)$-dimensional affine subspace, and indeed it suffices to only consider linear subspaces by Relation (2.4) of \cite{CarletBook}.
    The result immediately follows.
\end{proof}

We now prove the main result of this section.

\begin{theorem}\label{thm:parityadjoint}
    Assume $n \geq 4$, and let $F \colon \F_2^n \to \F_2^n$ be a crooked function.
    Then $F$ has an exclude parity adjoint $\varepsilon_F$.
\end{theorem}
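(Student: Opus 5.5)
The proof splits according to the parity of $n$. In both cases I use \Cref{lem:parity-adj-equiv}: it suffices to show that for every nonzero $u$ and every $2$-dimensional linear subspace $V$, the number $N_V=m_u(v_1)+m_u(v_2)+m_u(v_1+v_2)$ is odd, where $V=\langle v_1,v_2\rangle$. The proof of that lemma shows this is equivalent to $|\pi_F^{-1}(V^\perp)\setminus\set{0,u}^\perp|$ being even.

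\emph{Case $n$ even.} Let $H=\set{0,u}^\perp$ and write $\lambda_{b,H}^2=2^{n+k_b}$ for the squared amplitude of $b\cdot F|_H$. These are well defined because crooked functions are strongly plateaued, hence plateaued on hyperplanes. Since $\dim H=n-1$ is odd, every $k_b$ is a non-negative even integer; for $b=0$ we get $k_0=n-2$. \Cref{rem:plateaued-restriction-mults} at $t=0$ gives
\[
6m_u(v)=\sum_{b\in\F_2^n}(-1)^{b\cdot v}2^{k_b}.
\]
Summing over $v\in V\setminus\set{0}$ and using $\sum_{v\in V\setminus\set{0}}(-1)^{b\cdot v}=4\cdot 1_{V^\perp}(b)-1$, I get
\[
6N_V=4\sum_{b\in V^\perp}2^{k_b}-\sum_{b}2^{k_b}.
\]
Since $\graph{F|_H}$ is Sidon, \Cref{rem:amplitude-sums-Sidon-iff} gives $\sum_b 2^{k_b}=3\cdot 2^{n-1}-2$. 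Now reduce modulo $12$. Each $k_b$ is even, so $2^{k_b}\equiv 1\pmod 3$, and therefore $\sum_{b\in V^\perp}2^{k_b}\equiv 2^{n-2}\equiv 1\pmod 3$. Hence $6N_V\equiv 4-(3\cdot2^{n-1}-2)\pmod 3$, and also $6N_V\equiv 0-(3\cdot 2^{n-1}-2)\equiv 2\pmod 4$, using $n\ge4$. These combine to $6N_V\equiv 6\pmod{12}$, so $N_V$ is odd.

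\emph{Case $n$ odd.} For $n\geq 5$ I would invoke \Cref{lem:excludeparity-exists-iff-piFinv}, which reduces the claim to showing $\deg_{alg}(e\cdot\pi_F^{-1})\le n-3$ for every nonzero $e$. The proof of \Cref{prop:bentcomps-crookedAB-Walsh} gives $\bentcomps{F|_H}=\supp(e\cdot\pi_F^{-1})$ for $H=\set{0,e}^\perp$; this uses $\pi_F^{-1}(0)=0$ and the fact that $0$ is not a bent component. Hence $e\cdot\pi_F^{-1}=1_{\bentcomps{F|_H}}$. Identifying $H$ with $\F_2^{n-1}$, where $n-1$ is even, $F|_H$ is plateaued by the restriction result of \cite{CarletThornburghRestrictions}. \Cref{lem:indbentcpts-upperbound} then gives $\deg_{alg}(1_{\bentcomps{F|_H}})\le\frac{n-1}{2}$, and $\frac{n-1}{2}\le n-3$ exactly when $n\ge5$. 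This finishes the odd case.

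The main obstacle is the bookkeeping in the even case: checking that every $k_b$ (including $b=0$) is even, so that each $2^{k_b}\equiv1\pmod3$, and that the congruences modulo $3$ and modulo $4$ combine correctly. In the odd case, the delicate point is making sure \Cref{lem:indbentcpts-upperbound} applies to $F|_H$ viewed as an $(n-1,n)$-function, which holds because its proof uses only plateauedness.
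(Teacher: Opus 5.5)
Your proposal is correct. The odd case is the paper's argument verbatim, but your even case takes a genuinely different route.

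The paper works on the $\pi_F$ side of \Cref{lem:parity-adj-equiv}. By Kyureghyan's result, for each nonzero $b\in\im(\pi_F)$ the set $\pi_F^{-1}(b)\cup\set{0}$ is a linear subspace of size $2^{-n}\lambda_b^2$. For $n$ even this subspace has even positive dimension, so it meets the affine hyperplane $\F_2^n\setminus\set{0,u}^\perp$ in an even number of points. Hence $|\pi_F^{-1}(V^\perp)\setminus\set{0,u}^\perp|$ is even.

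You instead compute $N_V$ directly from the amplitudes of the restrictions $b\cdot F|_H$, using \Cref{rem:plateaued-restriction-mults} and the Sidon identity of \Cref{rem:amplitude-sums-Sidon-iff}, and never touch $\pi_F$. This is the technique the paper uses for \Cref{prop:algdeg-characterization-semibent} and \Cref{prop:nearbent-restriction-ind}. The paper's argument is shorter but rests on the fiber structure of $\pi_F$; yours is a self-contained computation.

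Your mod-$3$ step is vacuous, since $6N_V\equiv 0\pmod 3$ holds trivially. So the evenness of the $k_b$, which you flagged as the main obstacle, plays no role. All you need is $k_b\ge 0$, i.e. $\lambda_{b,H}^2\ge 2^n$ because $\dim H$ is odd; then $6N_V\equiv 2\pmod 4$ already forces $N_V$ to be odd.

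Seen this way, your formula also handles odd $n$ uniformly. There $\dim H$ is even and the bent restrictions give $k_b=-1$. The same congruence then shows that $N_V$ is odd if and only if $|\bentcomps{F|_H}\cap V^\perp|$ is even. This follows directly from \Cref{lem:indbentcpts-upperbound}, since $\frac{n-1}{2}<n-2$ for $n\ge 5$. That route avoids both $\pi_F^{-1}$ and \Cref{lem:excludeparity-exists-iff-piFinv}.
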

\begin{proof}
It is known that the ortho-derivative of $F$ has the property that for any nonzero $b \in \im(\pi_F)$, the set $\pi_F^{-1}(b) \cup \set{0}$ is a linear subspace of size $2^{-n} \lambda_b^2$, where $\lambda_b$ is the amplitude of $b \cdot F$ \cite{Kyureghyan2007}.
    In particular, if $n$ is even, then the dimension of $\pi_F^{-1}(b) \cup \set{0}$ is even for all $b \in \im(\pi_F)$.
    Since
    \begin{align*}
        N:=|\pi_F^{-1}(V^\perp) \setminus \set{0,u}^\perp|
        &= \sum_{b \in (V^\perp\setminus \set{0}) \cap \im(\pi_F)} |(\pi_F^{-1}(b) \cup \set{0}) \cap (\F_2^n \setminus \set{0,u}^\perp)|, 
    \end{align*}
    we see that $N$ is even when $n$ is even because each term in the sum on the right-hand-side is the intersection of a linear subspace of even (positive) dimension with the affine hyperplane $\F_2^n \setminus \set{0,u}^\perp$, which has even size.
    Therefore, $F$ has an exclude parity adjoint when $n$ is even by \Cref{lem:parity-adj-equiv}.

    Now, assume $n$ is odd, and let $H$ be an affine hyperplane with underlying vector space $\set{0,e}^\perp$.
    From the proof of \Cref{prop:bentcomps-crookedAB-Walsh}, we know that $\bentcomps{F|_H} = \supp(e \cdot \pi_F^{-1})$.
    In other words, 
    \[
    1_{\bentcomps{F|_H}} = e \cdot \pi_F^{-1}.
    \]
    Since $F$ is crooked, its restrictions to affine hyperplanes are plateaued \cite{CarletThornburghRestrictions}, and therefore, we can apply \Cref{lem:indbentcpts-upperbound} to deduce that $\deg_{alg}(e \cdot \pi_F^{-1}) = \deg_{alg}(1_{\bentcomps{F|_H}}) \leq \frac{n-1}{2}$.
    Thus, since $\frac{n-1}{2} \leq n-3$ holds for all $n \geq 5$, we can apply \Cref{lem:excludeparity-exists-iff-piFinv}, implying $F$ has an exclude parity adjoint.
\end{proof}

It is worth noting that if $F$ is a quadratic APN function and $n \geq 4$, then $\varepsilon_F(u)=0$ implies $u=0$.
Otherwise, we would have that $f_v$ is identically one for some nonzero $v \in \F_2^n$, but this is impossible by \Cref{cor:more-quadratic-cpts-closer-to-conjecture}.
In particular, if $F \colon \F_2^n \to \F_2^n$ is a quadratic APN function, then for any $u \neq 0$, the set from (\ref{affine-hyperplane-or-empty}) is always an affine hyperplane.

\begin{remark}
    Let $n \geq 4$, and let $F \colon \F_2^n \to \F_2^n$ be a crooked function.
    It seems natural to attempt to generalize the statement of \Cref{thm:parityadjoint} and prove that if $L \subseteq \F_2^n$ is a linear subspace of codimension at least $1$, then  
    \[
\set{v \in \F_2^n \setminus \set{0} : \mult_{\graph F \cap (L \times \F_2^n)}(0,F(0)+v) \text{ is even}}
    \]
    or 
        \[
\set{v \in \F_2^n \setminus \set{0} : \mult_{\graph F \cap (L \times \F_2^n)}(0,F(0)+v) \text{ is odd}} \cup \set{0}
    \]
    is an affine subspace.
    However, we have verified via computer calculations that if $n=7$ and $F$ is defined on $\F_{2^n}$ by $F(x)=x^3$, then there does not exist a linear subspace $L$ of codimension $2$ satisfying either of the above conditions.
\end{remark}

As previously mentioned, the existence of an exclude parity adjoint of an APN function is a very restrictive condition, and we will see that \Cref{thm:parityadjoint} has several interesting consequences in the following text. 
\Cref{thm:parityadjoint} also leads to the following problem.

\begin{openp}
    As shown in the proof of \Cref{thm:parityadjoint}, we see that when $n\geq5$ is odd and $F$ is a crooked function, we have $\deg_{alg}(\pi_F^{-1}) \leq \frac{n-1}{2}$.
    Is it true that $\deg_{alg}(v \cdot \pi_F^{-1}) = \frac{n-1}{2}$ for all nonzero $v \in \F_2^n$?  
\end{openp}

We also have the following proposition regarding the algebraic degrees of the functions $\pi_{F|_L}^v$ where $F$ is a crooked function and $L$ is a linear hyperplane. 
\begin{proposition}
    Assume $n \geq 4$.
    Let $F \colon \F_2^n \to \F_2^n$ be a crooked function, let $u \in \F_2^n$ be nonzero, and let $L = \set{0,u}^\perp$.
    Let $v \in \F_2^n$ be nonzero.
    Then $\deg_{alg}(\pi_{F|_L}^v) \leq n-3$ if $v \in  \set{0,\varepsilon_F(u)}^\perp$, and $\deg_{alg}(\pi_{F|_L}^v) = n-1$ otherwise.
\end{proposition}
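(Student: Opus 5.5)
We will apply \Cref{prop:crooked-orthoderivative-algdegbound} to the $(n-1)$-dimensional linear subspace $L=\set{0,u}^\perp$, with $k=n-1\geq 3$. We first check its hypotheses. Since $F$ is crooked, $F$ is differentially 2-uniform on $\F_2^n$ and hence on $L$. Moreover, every derivative $D_aF$ with $a\in L$ has an affine image. Restricting to $L$ keeps the image an affine space: $F$ is quadratic-like on hyperplanes only in the plateaued sense, so we argue through \Cref{prop:strongplat-char} instead. $F$ is strongly plateaued, and by \cite{CarletThornburghRestrictions} it is plateaued on hyperplanes. In the worst case we use directly that $D_aF|_L$ is the restriction of a 2-to-1 map onto an affine hyperplane. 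We expect to need to justify that $F$ has the crooked property on $L$ (needed for $\pi^v_{F|_L}$ to be defined). If this turns out to be delicate, we note that the proof of \Cref{prop:crooked-orthoderivative-algdegbound} only uses \Cref{lem:ortho-cpts-Walsh2}. That lemma in turn needs $\im(D_aF|_L)$ to be an affine space with $\delta_{F|_L}$ being $\set{0,2}$-valued, and we will verify this from the plateauedness of $F|_L$ together with the crookedness of $F$.

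With these hypotheses in place, \Cref{prop:crooked-orthoderivative-algdegbound} gives the dichotomy. If $m_L(v)=\mult_{\graph{F|_L}}(0,F(0)+v)$ is odd, then $\deg_{alg}(\pi^v_{F|_L})\leq k-2=n-3$. If $m_L(v)$ is even, then the degree equals $k=n-1$. By construction $m_L(v)=m_u(v)$, so the parity in question is exactly $f_v(u)$.

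Next we invoke \Cref{thm:parityadjoint}. Since $u\neq 0$ and $v\neq0$, it gives $v\cdot\varepsilon_F(u)=f_v(u)\oplus 1$. Hence $v\in\set{0,\varepsilon_F(u)}^\perp$, that is, $v\cdot\varepsilon_F(u)=0$, if and only if $f_v(u)=1$. This holds if and only if $m_u(v)$ is odd, which by the dichotomy is equivalent to $\deg_{alg}(\pi^v_{F|_L})\leq n-3$. Otherwise $m_u(v)$ is even and the degree is $n-1$. This is the whole argument.

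The main obstacle is the hypothesis check in the first step, namely that $F$ has the crooked property on the linear hyperplane $L$. We would try first to show that $\im(D_aF|_L)$ is an affine space using \Cref{rem:plateaued-restriction-mults}: the exclude multiplicities $\mult_{\graph{F|_L}}(t,F(t)+c)$ are constant in $t\in L$. This forces each value $D_aF(t)+c$ to be hit a number of times independent of $t$, and then the argument of \Cref{prop:strongplat-char} applied to $F|_L$ yields an affine image.
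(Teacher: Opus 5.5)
Your main argument in the second and third paragraphs is correct, and it is exactly the paper's proof. It identifies $m_L(v)=m_u(v)$, reads off its parity as $f_v(u)=v\cdot\varepsilon_F(u)\oplus 1$ via \Cref{thm:parityadjoint}, and then applies the dichotomy of \Cref{prop:crooked-orthoderivative-algdegbound} with $k=n-1\geq 3$. The paper does not verify that $F$ has the crooked property on $L$. That property is presupposed by the statement itself, because $\pi^v_{F|_L}$ is only defined (\Cref{def:generalized-ortho}) when every $\im(D_aF|_L)$, $a\in L$, is an affine space. It holds automatically when $F$ is quadratic, since then $D_aF$ is affine and so is its restriction to the linear subspace $L$.

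The hypothesis check you sketch would fail, and you should drop it rather than try to repair it. By the computation in the proof of \Cref{lem:ortho-cpts-Walsh2},
\[
6\mult_{\graph{F|_L}}\bigl(t,F(t)+c\bigr)=\sum_{a\in L}\delta_{F|_L}\bigl(a,D_aF(t)+c\bigr).
\]
So the $t$-independence that \Cref{rem:plateaued-restriction-mults} derives from plateauedness of $F|_L$ only controls a sum over all directions $a$. The argument of \Cref{prop:strongplat-char} needs $\delta_{F|_L}(a,D_aF(t)+c)$ to be independent of $t$ for each fixed $a$, which is strong plateauedness of $F|_L$, not plateauedness.

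Your other two routes do not help either. Knowing that $D_aF|_L$ is the restriction of a $2$-to-$1$ map onto an affine hyperplane only gives $|\im(D_aF|_L)|=2^{n-2}$, not affineness. Your fallback through \Cref{lem:ortho-cpts-Walsh2} is circular, since that lemma assumes exactly this affineness.

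In fact no argument from crookedness alone can work for every $u$. Since $D_aF$ is $2$-to-$1$ with fibres $\{x,x+a\}$ and $a\in L$, the sets $D_aF(L)$ and $D_aF(L^c)$ split the affine hyperplane $\im(D_aF)$ into two halves. Hence if $D_aF(L)$ is affine, so is $D_aF(L^c)$. The crooked property on every linear hyperplane would then give it on every affine hyperplane. By \Cref{prop:strongplat-char}, $F$ would be strongly plateaued on every affine hyperplane, and hence quadratic by the result of \cite{CarletThornburghRestrictions} recalled just after \Cref{4.17}. That would settle the open problem stated in the introduction. So you must take the crooked property on $L$ as a standing assumption implicit in the statement, as the paper does.
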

\begin{proof}
    For any nonzero $v$, we know that $m_{L,u}(v) \equiv f_v(u) \equiv v \cdot \varepsilon_F(u) + 1 \pmod{2}$.
    The result then immediately follows from \Cref{prop:crooked-orthoderivative-algdegbound}.
\end{proof}

In the following result, we provide a characterization of when $\varepsilon_F$ is an APN function (indeed, this is possible as we will see in \Cref{subsec:Gold}), where $F \colon \F_2^n \to \F_2^n$ is a crooked function and $n \geq 4$.
This characterization involves the notion of $k$th order sum-freeness, where we say that a function $G \colon \F_2^n \to \F_2^n$ is \textit{$k$th order sum-free} if for any $k$-dimensional affine space $A \subseteq \F_2^n$, we have $\sum_{x \in A} G(x) \neq 0$ \cite{CarletSumFree2025}.
Note that if $\deg_{alg}(G)\leq k$, then $G$ is $k$th order sum-free if and only if $\sum_{x \in L}G(x) \neq 0$ for all $k$-dimensional linear subspaces $L \subseteq \F_2^n$ (of course, if $\deg_{alg}(G) < k$, then this cannot happen since the sum of values that $G$ takes on any $k$-dimensional affine subspace would then be zero).  
Indeed, if $\deg_{alg}(G)\leq k$ and $L = \langle e_1, \dots, e_k\rangle$, then $\sum_{x \in t+L} G(x) = D_{e_1} \cdots D_{e_k}G(t)$ does not depend on the value of $t \in \F_2^n$.

\begin{theorem}\label{prop:adjoint-APN-iff-sumfree}
    Assume $n \geq 4$, and let $F \colon \F_2^n \to \F_2^n$ be a crooked function.
    Then $\varepsilon_F$ is APN if and only if $\pi_F$ is $(n-2)$th order sum-free.
\end{theorem}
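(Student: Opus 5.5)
Since $\varepsilon_F$ is quadratic (its components $f_v\oplus 1$ are quadratic by \Cref{prop:parity-fcn-algdeg-bound}), it is APN if and only if for every pair of linearly independent $e_1,e_2$ the second derivative $D_{e_1}D_{e_2}\varepsilon_F$, which is the constant $\sum_{u\in\langle e_1,e_2\rangle}\varepsilon_F(u)$, is nonzero. Indeed, a quadratic function is APN exactly when $\beta_{\varepsilon_F}(e_1,e_2)\neq 0$ for all independent $e_1,e_2$. Using $\varepsilon_F(0)=0$ (from \Cref{prop:plateaued-oddmults}), I reduce the APN condition to the following: for every $2$-dimensional linear subspace $W=\langle e_1,e_2\rangle$ there is a nonzero $v$ with $v\cdot(\varepsilon_F(e_1)+\varepsilon_F(e_2)+\varepsilon_F(e_1+e_2))=1$. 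Equivalently, it suffices that $f_v(e_1)\oplus f_v(e_2)\oplus f_v(e_1+e_2)=0$ for some $v\neq 0$, i.e. $m_{e_1}(v)+m_{e_2}(v)+m_{e_1+e_2}(v)$ is even.

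Next I translate this parity into a statement about $\pi_F$. By \Cref{prop:orthoderiv-algdeg-equiv} with $L=\F_2^n$ (using that $m(v)$ is odd), the evenness of this sum is precisely the local condition that $v\cdot\pi_F$ has odd weight on some $(n-2)$-dimensional affine subspace with underlying space $V=\langle e_1,e_2\rangle^\perp$. More directly, I compute as in the proof of \Cref{prop:orthoderiv-algdeg-equiv}: the sum $W_{v\cdot\pi_F}(0)+W_{v\cdot\pi_F}(e_1)+W_{v\cdot\pi_F}(e_2)+W_{v\cdot\pi_F}(e_1+e_2)$ equals $8+8(m_{e_1}(v)+m_{e_2}(v)+m_{e_1+e_2}(v))-2^n$ by \Cref{thm:crooked-function-ortho-Walsh}. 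The computation in \Cref{lem:algdeg-lemma} (with $t_0=0$) shows that this sum equals $4W_{v\cdot\pi_F|_{V}}(0)=4(2^{n-2}-2\wt(v\cdot\pi_F|_V))$. Therefore the sum $m_{e_1}(v)+m_{e_2}(v)+m_{e_1+e_2}(v)$ is even if and only if $\wt(v\cdot\pi_F|_V)$ is odd, that is, if and only if $v\cdot\sum_{x\in V}\pi_F(x)=1$, since $n\ge4$ makes the $2^{n-2}$ term harmless mod $16$.

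Combining the two steps, $\varepsilon_F$ is APN if and only if for every $(n-2)$-dimensional linear subspace $V$ there is some $v\neq 0$ with $v\cdot\sum_{x\in V}\pi_F(x)=1$, i.e. $\sum_{x\in V}\pi_F(x)\neq0$. It remains to pass from linear to affine $(n-2)$-subspaces. Here I use $\deg_{alg}(\pi_F)\le n-2$ from \Cref{prop:crooked-orthoderivative-algdegbound} together with the remark preceding the theorem: the sum over $t+V$ is an $(n-2)$th derivative and is therefore independent of $t$. This yields the $(n-2)$th order sum-freeness.

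The main obstacle is the bookkeeping in the middle step: obtaining the exact constant relating the three-term multiplicity sum to $\wt(v\cdot\pi_F|_V)$ modulo $2$. This requires care with the case $u=0$ versus $u\neq0$ in \Cref{thm:crooked-function-ortho-Walsh}, and with the $m(v)$ terms cancelling. I would check it against the second-case computation already displayed in \Cref{prop:orthoderiv-algdeg-equiv}.
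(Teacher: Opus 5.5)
Your proposal is correct and takes essentially the paper's route. Both arguments express $v\cdot\beta_{\varepsilon_F}(e_1,e_2)$ through the Walsh values of $v\cdot\pi_F$ on $\langle e_1,e_2\rangle$ via \Cref{thm:crooked-function-ortho-Walsh}, use Poisson summation (your reuse of the computation in \Cref{lem:algdeg-lemma}) to obtain $\wt(v\cdot\pi_F|_{\langle e_1,e_2\rangle^\perp})$ modulo $2$, and then pass from linear to affine $(n-2)$-subspaces via $\deg_{alg}(\pi_F)\le n-2$. The only difference is bookkeeping: you sum all four Walsh values with $W_{v\cdot\pi_F}(0)$ counted once, so the $m(v)$ terms cancel exactly. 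The paper instead sums the three congruences $v\cdot\varepsilon_F(u)\equiv\frac18(W_{v\cdot\pi_F}(u)-W_{v\cdot\pi_F}(0))$, which forces it to also use that $\wt(v\cdot\pi_F)$ is even; your version avoids that step.
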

\begin{proof}
    From the proof of \Cref{lem:parity-adj-equiv}, for any nonzero $u,v \in \F_2^n$, we know that $v \cdot \varepsilon_F (u) \equiv  \frac{1}{8}\parens{W_{v \cdot \pi_F}(u) - W_{v \cdot \pi_F}(0)} \pmod 2$.
    Fix $v \neq 0$, and let $u_1, u_2 \in \F_2^n$ be linearly independent, and let $U = \langle u_1, u_2 \rangle$.
    The Poisson summation formula \cite[Relation 2.39]{CarletBook} provides the equality 
    \[
    \sum_{u \in U}W_{v \cdot \pi_F}(u)=4\sum_{x \in U^\perp}(-1)^{v \cdot \pi_F(x)}.
    \]
    Then
    $v\cdot \beta_{\varepsilon_F}(u_1, u_2)=
    v \cdot (\varepsilon_F(u_1)+\varepsilon_F(u_2) + \varepsilon_F(u_1+u_2))$
    is congruent to 
    \begin{align*}
    &\frac{W_{v \cdot \pi_F}(u_1)+W_{v \cdot \pi_F}(u_2)+W_{v \cdot \pi_F}(u_1+u_2)-3W_{v \cdot \pi_F}(0)}{8}\\
    &= \frac{1}{2}\parens{ \sum_{x \in U^\perp} (-1)^{v \cdot \pi_F(x)} - W_{v \cdot \pi_F}(0)} \\
    &= \frac{1}{2}\parens{2^{n-2} - 2\wt(v \cdot \pi_F|_{U^\perp}) - (2^n-2\wt(v \cdot \pi_F)}\\
    &= 2^{n-3} - \wt(v \cdot \pi_F|_{U^\perp}) - 2^{n-1} + \wt(v \cdot \pi_F).
    \end{align*}
    modulo $2$.
    Since $\deg_{alg}(v \cdot \pi_F) \leq n-2 < n$ by \Cref{prop:crooked-orthoderivative-algdegbound}, we know $\wt(v \cdot \pi_F)$ is even.
    Combining this fact with the inequality $n \geq 4$ provides 
    $v \cdot \beta_{\varepsilon_F}(u_1, u_2)\equiv \wt(v \cdot \pi_F|_{U^\perp}) \pmod{2}$.
    Equivalently, $v \cdot \beta_{\varepsilon_F}(u_1, u_2)= v \cdot \sum_{x \in U^\perp} \pi_F(x)$, and since $v$ was arbitrary this implies that $\beta_{\varepsilon_F}(u_1, u_2) = \sum_{x \in U^\perp}\pi_F(x)$.
    Since $\varepsilon_F$ is quadratic, it is APN if and only if $\beta_{\varepsilon_F}(a,b)=0$ implies $a,b$ are linearly dependent.
    In other words, $\varepsilon_F$ is APN if and only if the sum of $\pi_F$ over any $(n-2)$-dimensional linear subspace is nonzero, i.e. $\pi_F$ is $(n-2)$th order sum-free.
\end{proof}

Let us provide an equivalent description of when $\varepsilon_F$ is APN, for a quadratic APN function $F$ and $n \geq4$.
For an $(n,m)$-function $G$ and an affine subspace $A \subseteq \F_2^n$, we say that $A$ is a \textit{degree-drop subspace} of $G$ if $\deg_{alg}(G|_A)<\deg_{alg}(G)$ (in \cite{CarletDegreeStability2025}, the original definition was for Boolean functions only, but we generalize it to vectorial functions).
The largest $k$ such that $G$ does not have a degree-drop subspace of codimension $k$ is called the \textit{restriction degree stability} $\deg_{stab}(G)$ of $G$ \cite{CarletDegreeStability2025}.
Since for a quadratic APN function $F$ and $n \geq 4$, we know that $\deg_{alg}(\pi_F) = n-2$, it follows that $\varepsilon_F$ is APN if and only if the restriction degree stability of $\pi_F$ is $2$, that is, for any linear subspace $L \subseteq \F_2^n$ of codimension $2$, there exists a nonzero $v \in \F_2^n$ such that $v\cdot\pi_F$ does not have $L$ as a degree-drop subspace.

\Cref{prop:adjoint-APN-iff-sumfree} then provides a condition to be potentially utilized in the search of new APN functions. 
\begin{openp}
Find crooked functions whose ortho-derivatives are $(n-2)$th order sum-free.
\end{openp}

We also prove that if two crooked functions are EA equivalent, then their exclude parity adjoints are linearly equivalent.

\begin{proposition}
    Assume $n \geq 4$.
    Let $F,G \colon \F_2^n \to \F_2^n$ be crooked functions.
    If $F$ and $G$ are EA equivalent, then $\varepsilon_F$ and $\varepsilon_G$ are linearly equivalent.
\end{proposition}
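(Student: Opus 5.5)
We write $G = \psi\circ F\circ\varphi + \rho$ with $\varphi(x)=Ax+a$, $\psi(y)=By+b$ affine permutations ($A,B$ linear invertible) and $\rho$ affine. The aim is to compute how the exclude parity functions $f_v$ transform, using their description as
\[
f_v(u) \equiv |\{\{x,y\}\subseteq \{0,u\}^\perp : \beta_F(x,y)=v\}| \pmod 2,
\]
or equivalently $f_v(u)=\bigoplus_{E\in\mathcal{S}_v}1_{E^\perp}(u)$ from \cref{eq:fvu-subspace-sum}. Here $\mathcal{S}_v$ is the set of 2-dimensional linear subspaces $E$ with $\sum_{x\in E}F(x)=v$.

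\textbf{First step: remove the affine parts.} The sum of an affine function over a 2-dimensional affine space is zero, so adding $\rho$ and the constant $b$ does not change the sum of $G$ over any 2-dimensional affine space. For the translation $a$, we use \Cref{rem:plateaued-restriction-mults} applied to linear hyperplanes; crooked functions are plateaued on all hyperplanes \cite{CarletThornburghRestrictions}. It shows that $\mult_{\graph{F|_H}}(t,F(t)+v)$ is independent of $t\in H$, and it also equals the corresponding value on the other coset $H^c$. Hence the sums of $F$ over 2-dimensional affine subspaces inside $t+H$ that equal $v$ are counted by a number independent of the translate. Therefore the parity $f_v(u)$ for $F\circ(x\mapsto x+a)$ equals that for $F$. (Alternatively, one can avoid this by noting that $\pi_F$ is unchanged by translation of the input and by adding affine maps, and then use the characterization $v\cdot\varepsilon_F(u)\equiv \frac18(W_{v\cdot\pi_F}(u)-W_{v\cdot\pi_F}(0))\pmod 2$ from the proof of \Cref{lem:parity-adj-equiv}. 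I will use this second route, since it is cleaner.)

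\textbf{Second step: compute the ortho-derivative of the linear part.} So assume $G(x)=BF(Ax)$. Then $D_aG(x)=B\,D_{Aa}F(Ax)$, hence $\im(D_aG)=B\,\im(D_{Aa}F)$. Its underlying space is $B(\{0,\pi_F(Aa)\}^\perp)=\{0,(B^{-1})^{\top}\pi_F(Aa)\}^\perp$, where $\top$ denotes the adjoint with respect to the inner product. Therefore $\pi_G=(B^{-1})^\top\circ\pi_F\circ A$. Consequently
\[
v\cdot\pi_G = (B^{-1}v)\cdot\pi_F\circ A, \qquad W_{v\cdot\pi_G}(u)=W_{(B^{-1}v)\cdot\pi_F}\big((A^{-1})^\top u\big).
\]
Plugging this into the congruence above gives $v\cdot\varepsilon_G(u)=(B^{-1}v)\cdot\varepsilon_F((A^{-1})^\top u)$ for nonzero $u,v$. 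In other words, $\varepsilon_G=(B^{-1})^\top\circ\varepsilon_F\circ(A^{-1})^\top$, and both sides vanish at $0$. This is a linear equivalence.

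\textbf{Main obstacle.} The delicate point is the handling of the general EA case, specifically the justification that $\pi_F$ is invariant under adding affine functions and translating the input. The second of these follows since $\im(D_aF(\cdot+a'))=\im(D_aF)$. The first follows since $D_a\rho$ is constant, so it only translates the image and leaves the underlying space unchanged. Given these, the congruence characterization from \Cref{lem:parity-adj-equiv} (valid for $n\ge4$ using \Cref{prop:plateaued-oddmults}) transfers everything, and no separate argument about multiplicities is required.
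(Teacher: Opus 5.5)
Your proposal is correct and follows the paper's proof: you derive $\pi_G=(B^{-1})^\top\circ\pi_F\circ A$, where the paper absorbs the translation and the added affine map in one line via $D_aG(x)=L_2(D_{L_1(a)}F(A_1(x)))+L_3(a)$. You then push the congruence $v\cdot\varepsilon_F(u)\equiv\frac18\bigl(W_{v\cdot\pi_F}(u)-W_{v\cdot\pi_F}(0)\bigr)\pmod 2$ from the proof of \Cref{lem:parity-adj-equiv} through the change of variables to obtain $\varepsilon_G=(B^{-1})^\top\circ\varepsilon_F\circ(A^{-1})^\top$, exactly as the paper does, and the multiplicity-based first route you sketch is not needed.
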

\begin{proof}
    Suppose there exist affine permutations $A_1, A_2 \colon \F_2^n \to \F_2^n$ and an arbitrary affine map $A_3 \colon \F_2^n \to \F_2^n$ such that $G = A_2 \circ F \circ A_1 + A_3$.
    For $1 \leq i \leq 3$, let $A_i = L_i + a_i$, where $L_i \colon \F_2^n \to \F_2^n$ is a linear map and $a_i \in \F_2^n$.
    
    It was shown in \cite[Proposition 36]{CanteautRecovering} that when $F$ is quadratic APN, we have $\pi_G = (L_2^{-1})^\ast \circ \pi_F \circ L_1$, where $(L_2^{-1})^\ast$ denotes the adjoint operator of $L_2^{-1}$.
    However, this straightforwardly generalizes to crooked functions too.
    Indeed, we have $D_a G(x) = L_2(D_{L_1(a)} F(A_1(x))) + L_3(a)$ and $\im(D_a G)$ then equals $L_2( \im(D_{L_1(a)} F)) +L_3(a)$, and therefore, the underlying linear hyperplane $H_{G,a}$ of $\im(D_a G)$ equals $L_2(H_{F,L_1(a)})$ where $H_{F,L_1(a)}$ is the underlying linear hyperplane of $\im(D_{L_1(a)}F)$.
    Since $\pi_G(a)$ is characterized by the fact it is orthogonal to $H_{G,a} = L_2(H_{F,L_1(a)})$, we have $L_2^\ast (\pi_G(a)) = \pi_F(L_1(a))$, and so $\pi_G = (L_2^{-1})^\ast \circ \pi_F \circ L_1$.
    
    From the proof of \Cref{lem:parity-adj-equiv}, for any nonzero $u,v \in \F_2^n$, we have the relation $v \cdot \varepsilon_F (u) \equiv  \frac{1}{8}\parens{W_{v \cdot \pi_F}(u) - W_{v \cdot \pi_F}(0)} \pmod 2$ and $v \cdot \varepsilon_G (u) \equiv  \frac{1}{8}\parens{W_{v \cdot \pi_G}(u) - W_{v \cdot \pi_G}(0)} \pmod 2$.
    For any nonzero $v \in \F_2^n$ and $x \in \F_2^n$, we have $v \cdot \pi_G(x) = v \cdot (L_2^{-1})^\ast(\pi_F(L_1(x))) = L_2^{-1}(v) \cdot \pi_F(L_1(x))$, and so 
    \[
    W_{v \cdot \pi_G}(u) = \sum_{x \in \F_2^n}(-1)^{L_2^{-1}(v) \cdot \pi_F(L_1(x)) + u \cdot x} = W_{L_2^{-1}(v) \cdot \pi_F}((L_1^{-1})^\ast(u))
    \]
    Thus, for any nonzero $u,v \in \F_2^n$, we have 
    $v \cdot \varepsilon_G(u) = L_2^{-1}(v) \cdot \varepsilon_F((L_1^{-1})^\ast(u))$, implying 
    \[
    \varepsilon_G(u) = ( (L_2^{-1})^\ast \circ \varepsilon_F \circ (L_1^{-1})^\ast)(u)
    \]
    for all $u \in \F_2^n$ (with equality at $0$ holding by the definition of a parity adjoint function).
\end{proof} 

Thus, for $n \geq 4$, the exclude parity adjoint of a crooked function $F\colon \F_2^n \to \F_2^n$ can be used when testing EA equivalence of $F$ to another function $G\colon \F_2^n \to \F_2^n$. 
However, our above methods of determining $\varepsilon_F$ all rely on first knowing the ortho-derivative of $F$, which is regarded as highly discriminating with respect to the EA equivalence class of $F$.
It remains an open problem to determine whether or not one can compute $\varepsilon_F$ quicker than the best-known algorithms to compute $\pi_F$, for a given crooked function $F$.

\subsection{Finding an explicit description of $\varepsilon_F(u)$}
So, far, we have not provided an explicit form for the exclude parity adjoint $\varepsilon_F$ of a crooked $(n,n)$-function, where $n \geq 4$ (besides of course, describing its component functions).
However, we will now provide an equality, in the case of $n\geq4$ even, that exactly describes $\varepsilon_F$ as a vectorial function, rather than only via its component functions.
Recall that we write $\semibentcomps{F} = \set{b \in \F_2^n : b \cdot F \text{ is semi-bent}}$.

\begin{proposition}\label{prop:n-even-excludeParityAdjExplicit}
    Assume $n \geq 4$, and let $F$ be a crooked $(n,n)$-function.
    If $n$ is even, then for any $u \in \F_2^n$, we have
    \[
    \varepsilon_F(u)=\sum_{b \in X_u}b,
    \]
    where $X_u = \set{b \in \semibentcomps{F} : u \notin \mathcal{E}_{b \cdot F}^\perp}$ and $|X_u| \equiv 0 \pmod 4$.
    If $n$ is odd, then for any $u,v \in \F_2^n$ where $v \neq 0$, we have 
    \[
    v \cdot \varepsilon_F(u) \equiv \frac{1}{2} \sum_{b \in Y_u} v \cdot b 
    \pmod{2},
    \]
    where $Y_u = \set{b \in \F_2^n \setminus \set{0} : u \notin \mathcal{E}_{b \cdot F}^\perp}$ and $|Y_u| \equiv 0 \pmod 8$.
\end{proposition}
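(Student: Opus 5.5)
Fix a nonzero $u$ and put $H=\set{0,u}^\perp$. We start from the formula of \Cref{rem:plateaued-restriction-mults}: for nonzero $v$,
\[
6\,m_u(v)=\sum_{b\in\F_2^n}(-1)^{v\cdot b}\lambda_{b,H}^2\,2^{-n}.
\]
By \cite[Remark 2]{CarletThornburghRestrictions}, $\lambda_{b,H}=\lambda_b$ when $u\in\mathcal{E}_{b\cdot F}^\perp$ and $\lambda_{b,H}=\lambda_b/2$ otherwise. Write $\lambda_b^2=2^{n+s_b}$, where $s_b$ is the dimension of $\mathcal{E}_{b\cdot F}$ (for $b\neq 0$). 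Taking the difference with the analogous formula for $m(v)$ (with $\lambda_{b,\F_2^n}^2$ replaced by $2\lambda_{b,H}^2$ on $H$, or simply via \Cref{thm:crooked-function-ortho-Walsh}) gives
\[
6\,m_u(v)-\tfrac{3}{2}\cdot(\text{full term})=-\tfrac34\sum_{b\in Y_u}(-1)^{v\cdot b}2^{s_b}.
\]
Concretely, I would derive
\[
m_u(v)=\tfrac14 m(v)-\tfrac18\sum_{b\in Y_u}(-1)^{v\cdot b}2^{s_b}\cdot(\text{const}),
\]
using that the $b=0$ term is common to both sums. Then, using $v\cdot\varepsilon_F(u)\equiv m_u(v)+1$ and that $m(v)$ is odd (\Cref{prop:plateaued-oddmults}), I obtain $v\cdot\varepsilon_F(u)$ as a $2$-adic expression in $\sum_{b\in Y_u}(-1)^{v\cdot b}2^{s_b}$.

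For $n$ even, $s_b$ is even. Bent components have $s_b=0$ and $\mathcal{E}=\set{0}$, so $u\notin\set{0}^\perp=\F_2^n$ is impossible and they never lie in $Y_u$. Components with $s_b\ge 4$ contribute terms divisible by $16$, which vanish at the precision needed. Only the semi-bent components remain ($s_b=2$), each contributing $4(-1)^{v\cdot b}=4-8(v\cdot b)$. This yields
\[
v\cdot\varepsilon_F(u)\equiv c_1|X_u|/4+\sum_{b\in X_u}v\cdot b\pmod 2
\]
for an appropriate constant $c_1$. Evaluating the same identity at $v=0$, or averaging over $v$ and using that $v\mapsto v\cdot\varepsilon_F(u)$ is linear (\Cref{thm:parityadjoint}), should force $|X_u|\equiv 0\pmod 4$. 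Then $v\cdot\varepsilon_F(u)=v\cdot\sum_{b\in X_u}b$ for all $v$, which gives the vector identity.

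For $n$ odd, $F$ is AB, so $s_b=1$ for all $b\neq0$. The sum becomes $2\sum_{b\in Y_u}(-1)^{v\cdot b}=2|Y_u|-4\sum_{b\in Y_u}v\cdot b$. Matching parities gives $v\cdot\varepsilon_F(u)\equiv |Y_u|/8+\tfrac12\sum_{b\in Y_u}v\cdot b$. Again, linearity in $v$ (compare $v_1,v_2,v_1+v_2$ and use $v\cdot\varepsilon_F(u)$ linear) is used to kill the constant term, giving $|Y_u|\equiv0\pmod 8$. I also need $\sum_{b\in Y_u}v\cdot b$ to be even. Since $Y_u=\supp(u\cdot\pi_F^{-1})$ by \Cref{prop:bentcomps-crookedAB-Walsh}, we have $|Y_u|=2^{n-1}$, which is divisible by $8$ for $n\ge4$ and so directly confirms the congruence. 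The evenness of $\sum v\cdot b$ is the weight of $v\cdot\mathrm{id}$ on $\supp(u\cdot\pi_F^{-1})$, which equals a Walsh value of $u\cdot\pi_F^{-1}$ and is even because $\deg_{alg}(\pi_F^{-1})<n$.

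The main obstacle is the careful $2$-adic bookkeeping. One must get the exact constant relating $m_u(v)$ and $m(v)$, and correctly discard the $b=0$ term and the components of large amplitude modulo $16$. The rest is a mechanical consequence of \Cref{rem:plateaued-restriction-mults} and \Cref{thm:parityadjoint}.
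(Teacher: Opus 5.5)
Your route is the paper's: the two formulas of \Cref{rem:plateaued-restriction-mults}, the halving $\lambda_{b,H}=\lambda_b/2$ exactly when $u\notin\mathcal{E}_{b\cdot F}^\perp$, reduction modulo $2$ using that $m(v)$ is odd (\Cref{prop:plateaued-oddmults}), discarding the bent and large-amplitude components, and linearity in $v$. However, the identity you actually propose, $m_u(v)=\tfrac14 m(v)-\tfrac18\sum_{b\in Y_u}\cdots$, is wrong. With it, the step ``$m(v)$ is odd'' gives nothing, since $\tfrac14 m(v)$ is not an integer. A $\tfrac14 m(v)$ form is correct only with a plus sign and the sum over the complementary set $\set{b : u\in\mathcal{E}_{b\cdot F}^\perp}$, which contains all bent components and is useless for parity. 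Subtracting the two formulas directly gives
\[
m(v)-m_u(v)=\frac{1}{2^{n+3}}\sum_{b\,:\,u\notin\mathcal{E}_{b\cdot F}^\perp}(-1)^{v\cdot b}\lambda_b^2 .
\]
Here $b=0$ is \emph{not} common to both sums: $\mathcal{E}_{0}=\F_2^n$, so $u\notin\mathcal{E}_0^\perp=\set{0}$ and $\lambda_{0,H}=2^{n-1}=\lambda_0/2$. The $b=0$ term therefore lies in the index set and contributes $2^{n-3}$, which is even precisely because $n\geq 4$. Since your $Y_u$ excludes $0$, this term must be handled separately. With the correct identity, your constants are $c_1=2$, i.e.\ $v\cdot\varepsilon_F(u)\equiv |X_u|/2+v\cdot\sum_{b\in X_u}b \pmod 2$, and $|Y_u|/4$ rather than $|Y_u|/8$ in the odd case. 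Evaluating at $v=0$ is not legitimate, because $v\cdot\varepsilon_F(u)\equiv m_u(v)+1$ is only available for $v\neq 0$. Comparing $v,v',v+v'$ does give $|X_u|\equiv 0\pmod 4$, as in the paper.

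In the odd case, your count $|Y_u|=|\supp(u\cdot\pi_F^{-1})|=2^{n-1}$ (via $\mathcal{E}_{b\cdot F}=\set{0,\pi_F^{-1}(b)}$, as in the proof of \Cref{prop:bentcomps-crookedAB-Walsh}) is the right way to get $|Y_u|\equiv 0\pmod 8$. It is more explicit than the paper's ``similar to before''. The linearity comparison alone only yields $|Y_u|/4\equiv|\set{b\in Y_u : v\cdot b=v'\cdot b=1}|\pmod 2$, because $v\mapsto\tfrac12\sum_{b\in Y_u}v\cdot b$ is not additive modulo $2$. Your justification that $N_v:=\sum_{b\in Y_u}v\cdot b$ is even, however, does not work. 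One has $N_v=\bigl(W_{u\cdot\pi_F^{-1}}(v)+2^n\bigr)/4$, so its parity requires $W_{u\cdot\pi_F^{-1}}(v)\equiv 0\pmod 8$. The bound $\deg_{alg}(\pi_F^{-1})<n$ is automatic for a permutation and only gives divisibility by $4$. What you need is $\deg_{alg}(u\cdot\pi_F^{-1})\leq n-2$, so that $b\mapsto (u\cdot\pi_F^{-1}(b))(v\cdot b)$ has degree $<n$ and hence even weight. This follows from the bound $\frac{n-1}{2}$ obtained in the proof of \Cref{thm:parityadjoint}. More simply, it follows from integrality: the corrected identity reads $m(v)-m_u(v)=2^{n-3}+\tfrac14\bigl(2^{n-1}-2N_v\bigr)$, which forces $N_v$ to be even. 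This evenness is also what lets you replace $-\tfrac12 N_v$ by $+\tfrac12 N_v$ and conclude $v\cdot\varepsilon_F(u)\equiv\tfrac12 N_v\pmod 2$.
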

\begin{proof}
    Let $u,v \neq 0$, and let $H = \set{0,u}^\perp$. 
    From \Cref{rem:plateaued-restriction-mults}, we know that 
    \[
    m(v) = \frac{1}{6 \cdot 2^n}\sum_{b \in \F_2^n} (-1)^{v \cdot b} \lambda_b^2
    \quad\text{and} \quad
    m_u(v) = \frac{1}{6\cdot 2^n}\sum_{b \in \F_2^n}(-1)^{v \cdot b}\lambda_{b,H}^2, 
    \]
     where $\lambda_b$ (resp. $\lambda_{b,H}$) is the amplitude of $b \cdot F$ (resp. $b \cdot F|_H$). 
    Recall that we know (also see \cite[Section 4]{CarletThornburghRestrictions}) that $\lambda_{b,H}$ is equal to $\lambda_b$ if $u \in \mathcal{E}_{b \cdot F}^\perp$, and $\lambda_{b,H} = \frac{1}{2}\lambda_b$ otherwise.
    Therefore, 
    \[
    m(v) - m_u(v) = \frac{1}{2^{n+3}} \sum_{\substack{b \in \F_2^n \\ u \notin \mathcal{E}_{b \cdot F}^\perp}}(-1)^{v \cdot b} \lambda_b^2.
    \]
    We will now reduce both sides of the above equation modulo $2$, but we first make some observations. 
    First, note that if $b \cdot F$ is not bent nor semi-bent, then $\lambda_b^2 \geq 2^{n+4}$, implying $\frac{\lambda_b^2}{2^{n+3}}$ is even.
    If $b \cdot F$ is bent, then all derivatives of $b \cdot F$ are balanced and cannot be constant, implying $\mathcal{E}_{b \cdot F}^\perp  = \F_2^n$.
    Upon combining these observations and also applying \Cref{prop:plateaued-oddmults} (which implies $m(v) \equiv 1 \mod 2$), we have 
    \[
    v \cdot \varepsilon_F(u) \equiv m_u(v) +1 \equiv  \frac{1}{2^{n+3}} \sum_{\substack{b \in \semibentcomps{F} \\ u \notin \mathcal{E}_{b \cdot F}^\perp}} (-1)^{v \cdot b} \lambda_b^2
    \equiv \frac{1}{2} \sum_{\substack{b \in \semibentcomps{F} \\ u \notin \mathcal{E}_{b \cdot F}^\perp}} (-1)^{v \cdot b}
    \pmod{2}.
    \]
    Let $X_u = \set{b \in \semibentcomps{F} : u \notin \mathcal{E}_{b \cdot F}^\perp}$.
    Then 
    \[
    v \cdot \varepsilon_F(u) \equiv \frac{|X_u|}{2} - \sum_{b \in X_u} v \cdot b
    \equiv \frac{|X_u|}{2} + v \cdot \sum_{b \in X_u} b \pmod{2}.
    \]
    Considering any nonzero $v'\neq v$, we have $(v+v') \cdot \sum_{b \in X_u} b \equiv \frac{|X_u|}{2} +v \cdot \sum_{b \in X_u} b +  v'\cdot \sum_{b \in X_u} b \pmod{2}$, implying $X_u$ has size divisible by four.
    Therefore, for any $u,v \in \F_2^n$ (the cases where $u$ or $v$ is are immediate), we have $v \cdot \varepsilon_F(u) = v \cdot \sum_{b \in X_u} b$, and it immediately follows that $\varepsilon_F(u) = \sum_{b \in X_u} b$, as desired.

    Now, consider when $n$ is odd. 
    Then, all component functions of $F$ have amplitude $2^{\frac{n+1}{2}}$ since $F$ is AB, implying (using the same reasoning as above) that for nonzero $u,v \in \F_2^n$, we have
    \[
    v \cdot \varepsilon_F(u) \equiv \frac{1}{4} \sum_{\substack{b \in \F_2^n \setminus \set{0} \\ u \notin \mathcal{E}_{b \cdot F}^\perp}} (-1)^{v \cdot b} 
    \equiv \frac{|Y_u|}{4} + \frac{1}{2} \sum_{b \in Y_u} v \cdot b
    \pmod{2},
    \]
    where $Y_u = \set{b \in \F_2^n \setminus \set{0} : u \notin \mathcal{E}_{b \cdot F}^\perp}$.
    Similar to before, it is straightforward that $|Y_u|$ is divisible by $8$, and the result follows.
\end{proof}

We then have the following lower bound on the number of semi-bent components of a crooked function over $\F_2^n$ where $n \geq 4$ is even.

\begin{corollary}\label{cor:n-semi-bent}
    Assume $n \geq 4$ is even, and let $F$ be a quadratic APN $(n,n)$-function.
    Then $F$ has at least $n$ semi-bent components.
\end{corollary}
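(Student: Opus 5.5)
The plan is to combine \Cref{prop:n-even-excludeParityAdjExplicit} with the fact, noted after \Cref{thm:parityadjoint}, that for quadratic APN $F$ we have $\varepsilon_F(u)\neq 0$ for every nonzero $u$. By \Cref{prop:n-even-excludeParityAdjExplicit}, for each $u\in\F_2^n$,
\[
\varepsilon_F(u)=\sum_{b\in X_u} b,\qquad X_u=\set{b\in\semibentcomps{F} : u\notin\mathcal{E}_{b\cdot F}^\perp}\subseteq \semibentcomps{F}.
\]
Hence every value $\varepsilon_F(u)$ lies in the linear span $W:=\langle \semibentcomps{F}\rangle$. It therefore suffices to show that $\dim W\geq n$, since a spanning set of an $n$-dimensional space has at least $n$ elements.

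To get $W=\F_2^n$, I would argue by contradiction. Suppose $W\neq\F_2^n$. Then there is a nonzero $v\in W^\perp$, and $v\cdot \varepsilon_F(u)=0$ for all $u\in\F_2^n$. By the definition of the exclude parity adjoint, this gives $f_v(u)=1$ for all nonzero $u$. Also $f_v(0)=m(v)\bmod 2=1$ by \Cref{prop:plateaued-oddmults}. So $f_v$ is identically one and has algebraic degree $0\neq 2$.

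This contradicts what we know about $v\cdot\pi_F$. By \Cref{cor:more-quadratic-cpts-closer-to-conjecture}, $\deg_{alg}(v\cdot\pi_F)=n-2$ since $F$ is quadratic. \Cref{prop:orthoderiv-algdeg-equiv}, with $L=U=\F_2^n$, then forces $\deg_{alg}(f_v)=2$. Hence $W=\F_2^n$, and $\semibentcomps{F}$ spans $\F_2^n$, so $|\semibentcomps{F}|\geq n$.

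The only delicate point is making sure $\varepsilon_F(u)\in W$ holds for all $u$, which is exactly the explicit formula for even $n$. The rest is linear algebra. An equivalent way to phrase the contradiction is that $\varepsilon_F$ would be the zero function on $W^\perp$-components, contradicting $\varepsilon_F(u)\neq 0$ only componentwise. That is why I pass through $f_v$ rather than just using $\varepsilon_F(u)\neq0$.
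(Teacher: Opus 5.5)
Your proof is correct and follows the paper's argument. By \Cref{prop:n-even-excludeParityAdjExplicit}, $\im(\varepsilon_F)$ lies in the span of $\semibentcomps{F}$, and no component $v\cdot\varepsilon_F=f_v\oplus 1$ is constant because $\deg_{alg}(f_v)=2$, so the image cannot lie in a hyperplane. As your final paragraph recognizes, the ingredient actually needed is this componentwise non-constancy, not merely $\varepsilon_F(u)\neq 0$ for $u\neq 0$, so the opening sentence of your plan slightly misstates what is used.
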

\begin{proof}
    Since $F$ is quadratic APN, its exclude parity adjoint $\varepsilon_F$ has component functions all of algebraic degree $2$.
    In particular, no component function of $\varepsilon_F$ is constant implying its image cannot be contained in a hyperplane.
    Moreover, we know from \Cref{prop:n-even-excludeParityAdjExplicit} that the span of $\semibentcomps{F}$ contains $\im(\varepsilon_F)$, and so $\semibentcomps{F}$ spans $\F_2^n$, providing $|\semibentcomps{F}|\geq n$.
\end{proof}

\begin{remark}
    One can show that if $n \geq 4$ is even $F\colon \F_2^n \to \F_2^n$ is a crooked function such that $\deg_{alg}(\pi_F)=n-2$, then $F$ has at least $5$ semi-bent components.
    Indeed, if $\pi_F$ has a component of algebraic degree $n-2$ (e.g. in the case that $F$ has at least one quadratic component, see \Cref{cor:more-quadratic-cpts-closer-to-conjecture}), then by \Cref{prop:orthoderiv-algdeg-equiv} there exist some linearly independent $e_1, e_2 \in \F_2^n$ satisfying the condition from \Cref{prop:algdeg-characterization-semibent}, and this immediately implies $\semibentcomps{F} \neq \emptyset$.
    This non-emptiness result is necessary to apply a result of Heden \cite{HedenLengthTail2009}.
    It is known that for any nonzero $b$ in the image of $\pi_F$, the set $\pi_F^{-1}(b) \cup \set{0}$ is a linear subspace of size $2^{-n} \lambda_b^2$, where $\lambda_b$ is the amplitude of $b \cdot F$  \cite{Kyureghyan2007}, and note that this implies $\pi_F^{-1}(b) \cup \set{0}$ has even dimension for $b \in \im(\pi_F)$ (of course, we then have $b \in \im(\pi_F)$ is equivalent to $b \cdot F$ is non-bent).
    The collection $\set{\set{0} \cup \pi_F^{-1}(b) : b \in \im(\pi_F}$ is then a collection of linear subspaces that trivially intersect and such that their union is $\F_2^n$, and such a collection is called a \textit{vector space partition}.
    Heden's result provides a lower bound the number of the number of subspaces of smallest dimension in any vector space partition, and by a straightforward argument it follows $|\semibentcomps{F}|\geq 5$.
    However, we omit the details as this bound is only stronger than \Cref{cor:n-semi-bent} in the case of $n=4$.
\end{remark}

\begin{remark}
    For a plateaued APN function $F \colon  \F_2^n \to \F_2^n$, we say that the amplitude distribution of $F$ is $[i_1^{n_1}, \dots, i_k^{n_k}]$ if $\sum_{j=1}^k n_j=2^n-1$ and $i_1<\cdots <i_k$ and $F$ has exactly $n_j$ components of amplitude $2^{(n+i_j)/2}$.
    For even $n \leq 8$, it was already known that any quadratic APN function has at least one semi-bent component because their exact amplitude distributions are known, see \cite{beneteau2025walshspectraquadraticapn}. 
    It was proven in \cite[Theorem 4.6]{beneteau2025walshspectraquadraticapn} that any quadratic APN function $F \colon \F_2^{10} \to \F_2^{10}$ has an amplitude distribution of the form: $[0^{766},2^{256}, 8^1]$, or $[0^{702+4i}, 2^{320-5i},4^{i}, 6^1]$ with $0 \leq i \leq 64$, or $[0^{628+4i}, 2^{341-5i}, 4^{i}]$ with $0 \leq i \leq 65$.
    Out of these $133$ possible amplitude distributions, $[0^{958}, 4^{64}, 6^1]$ and $[0^{954}, 2^5, 4^{63}, 6^1]$ are two amplitude distributions that have less than 10 semi-bent components, and upon applying \Cref{cor:n-semi-bent}, we then know that neither of these are possible.
\end{remark}

\begin{openp}
    Improve the lower bounds on the number of semi-bent components of a quadratic APN function over $\F_2^n$, where $n$ is even.
\end{openp}

\subsection{Gold APN functions}\label{subsec:Gold}

We have verified computationally that the exclude parity adjoint of a quadratic APN function $F$ is usually different from $F$ and need not be CCZ equivalent to $F$.
However, it is possible for the exclude parity adjoint of a quadratic APN function to be APN itself, and in the case of a Gold APN function $F$, we have $\varepsilon_F = F$.

\begin{proposition}\label{prop:Gold-adj}
    Assume $n \geq 4$, and let $F \colon \F_{2^n} \to \F_{2^n}$ be the Gold APN function defined by $F(x)=x^d$, where $d = 2^i +1$ and $\gcd(i,n)=1$.
    Then $\varepsilon_F = F$.
\end{proposition}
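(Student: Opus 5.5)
We prove $\varepsilon_F = F$ by computing $\varepsilon_F(u)$ explicitly through the characterization in the proof of \Cref{prop:adjoint-APN-iff-sumfree}, namely
\[
\beta_{\varepsilon_F}(u_1,u_2)=\varepsilon_F(u_1)+\varepsilon_F(u_2)+\varepsilon_F(u_1+u_2)=\sum_{x \in \langle u_1,u_2\rangle^\perp}\pi_F(x).
\]
Since $\varepsilon_F$ is quadratic with $\varepsilon_F(0)=0$, it is determined by its bilinear part $\beta_{\varepsilon_F}$ together with its values on a basis. So the plan is: (i) show that $\beta_{\varepsilon_F}=\beta_F$, where $\beta_F(u_1,u_2)=u_1^{2^i}u_2+u_1u_2^{2^i}$; (ii) fix the linear part, i.e. determine $\varepsilon_F(u)$ modulo quadratic forms, by a separate argument.

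For step (i), recall that the ortho-derivative of a Gold function is $\pi_F(x)=x^{-(2^i+1)}$ up to a nonzero scalar. For $x^3$ this is stated in \Cref{rem:cpts-ortho-generalized}; for general $i$ it follows the same way, since $D_aF(x)+F(a)=a^{2^i}x+ax^{2^i}=a^{2^i+1}\bigl((x/a)^{2^i}+x/a\bigr)$ has image $a^{2^i+1}\ker\Tr$ when $\gcd(i,n)=1$, giving $\pi_F(a)=a^{-(2^i+1)}$. So we must compute $\sum_{x\in V}x^{-d}$ for $V=\langle u_1,u_2\rangle^\perp$, a codimension-2 subspace with respect to $\Tr(xy)$.

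The key tool is to write $1_V(x)=\frac14\sum_{w\in\langle u_1,u_2\rangle}(-1)^{\Tr(wx)}$ and use the trace-form description. Alternatively, and more cleanly, use the duality $\sum_{x\in V}x^{-d}=\sum_x 1_V(x)x^{2^n-1-d}$, expand $1_V(x)=\prod_{j=1,2}(1+\Tr(u_jx))$ as a polynomial in $x$, and pick out the coefficient of $x^{d}$. Since $\sum_{x\in\F_{2^n}}x^k$ is nonzero only when $k\equiv0 \pmod{2^n-1}$ with $k>0$, only the cross term $\Tr(u_1x)\Tr(u_2x)$ contributes. It contributes monomials $u_1^{2^a}u_2^{2^b}x^{2^a+2^b-d}$, which survive exactly when $2^a+2^b\equiv 2^i+1$, i.e. $\{a,b\}=\{0,i\}$. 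This yields $u_1^{2^i}u_2+u_1u_2^{2^i}=\beta_F(u_1,u_2)$, possibly after a Frobenius twist or a scalar, which we expect to be absorbed by the normalization of $\pi_F$. This bookkeeping with exponents and normalization is the main obstacle: a stray Frobenius power or an inverse-exponent mismatch would give $\varepsilon_F=F^{2^j}$ rather than $F$, so the computation must be done carefully with the exact form of $\pi_F$.

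For step (ii), the equality $\beta_{\varepsilon_F}=\beta_F$ only yields $\varepsilon_F=F+L$ for some linear $L$. To show $L=0$, use the defining relation $v\cdot\varepsilon_F(u)=f_v(u)\oplus1$. Restricted to a line $\{0,u\}$, a quadratic function with zero constant term is determined by its value at $u$, so one direct check $\varepsilon_F(u)=u^d$ for each $u$ is needed. We plan to exploit invariance instead. The map $x\mapsto \lambda x$ with $\lambda\neq0$ sends $F$ to $\lambda^dF$, and by the linear equivalence result preceding this subsection it gives $\varepsilon_F(\lambda u)=\lambda^{d}\varepsilon_F(u)$, with the adjoints computed explicitly there. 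Similarly, Frobenius invariance gives $\varepsilon_F(u^2)=\varepsilon_F(u)^2$. Hence $\varepsilon_F(u)=cu^d$ for a constant $c=\varepsilon_F(1)$. A linear $L$ compatible with $L(\lambda u)=\lambda^dL(u)$ for all $\lambda$ must vanish, since $d$ is not a power of $2$ modulo $2^n-1$ for $n\geq4$. Comparing bilinear parts then gives $c=1$.
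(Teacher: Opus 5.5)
Your proof is correct, but it fixes the constant differently from the paper. The paper never computes $\beta_{\varepsilon_F}$. It first gets $\varepsilon_F(u)=a^d\varepsilon_F(a^{-1}u)$ directly from the counting identity $m_u(v)=m_{ua^{-1}}(a^dv)$, so $\varepsilon_F=cx^d$. It then uses $m_u(v)=m_{u^2}(v^2)$ to force $c=c^{2^{n-1}}$, i.e. $c\in\F_2$. Finally it rules out $c=0$ because $\varepsilon_F$ vanishes only at $0$ for quadratic APN $F$, and that fact rests on the paper's proof of Gorodilova's conjecture.

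You reach the same homogeneity through the EA-equivalence formula, applied to the self-equivalence $F=\lambda^{-d}F(\lambda\,\cdot)$ and using that multiplication by $\lambda$ is self-adjoint for $\Tr(xy)$. You then replace both the Frobenius step and the appeal to Gorodilova by evaluating $\beta_{\varepsilon_F}(u_1,u_2)=\sum_{x\in\langle u_1,u_2\rangle^\perp}\pi_F(x)$ explicitly.

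What your route buys:
\begin{itemize}
\item It needs only the upper bound $\deg_{alg}(\pi_F)\le n-2$ (used in deriving that formula), not the lower bound.
\item As a byproduct it shows directly that $\sum_{x\in V}x^{-d}=\beta_F(u_1,u_2)\neq 0$ for independent $u_1,u_2$. That is, $\pi_F$ is $(n-2)$th order sum-free, consistent with $\varepsilon_F=F$ being APN.
\end{itemize}
The paper's route is shorter and computation-free. Your two finishing arguments are redundant. Once $\varepsilon_F=F+L$ with $L$ linear and $L(\lambda u)=\lambda^dL(u)$, you get $L(\lambda)=\lambda^dL(1)$. Since $x\mapsto x^d$ is not additive, this already forces $L=0$. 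So comparing bilinear parts, and Frobenius invariance, are not needed.

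The hedging in step (i) about a scalar or a Frobenius twist is unnecessary. The ortho-derivative is unique, so there is no scalar freedom. Moreover $U_a=a^d\ker\Tr=\{0,a^{-d}\}^\perp$ gives $\pi_F(a)=a^{-d}$ exactly. Reduce $i$ modulo $n$ so that $1\le i\le n-1$. Then $2^i+1$ is not congruent to a power of $2$ modulo $2^n-1$, so the constant and linear parts of $(1+\Tr(u_1x))(1+\Tr(u_2x))\,x^{2^n-1-d}$ sum to zero over $\F_{2^n}$. The cross term contributes exactly $(u_1^{2^i}u_2+u_1u_2^{2^i})\sum_{x\in\F_{2^n}}x^{2^n-1}=u_1^{2^i}u_2+u_1u_2^{2^i}$, because $\sum_{x\in\F_{2^n}}x^{2^n-1}=2^n-1=1$ in characteristic $2$.
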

\begin{proof}
    Throughout this proof, we will always consider $u,v,a$ to be nonzero elements of $\F_{2^n}$.
    Recall from \Cref{sec:algdeg} that $m_u(v)$ is equal to the number of 2-dimensional linear subspaces $E \subseteq \{0,u\}^\perp$ such that $\sum_{x \in E}F(x)=v$.
    Hence, $m_u(v) = m_{ua^{-1}}(a^dv)$ for any nonzero $a$ because if $E \subseteq \set{0,u}^\perp$ is a 2-dimensional subspace such that $\sum_{x \in E} x^d = v$, then $\sum_{y \in aE} y^d = a^d \sum_{x \in E} x^d$ and $aE \subseteq \{0,a^{-1}u\}^\perp$.
    Since $\Tr(v\varepsilon_F(u)) \equiv m_u(v) + 1 \pmod 2$, we have $\Tr(a^d v \varepsilon_F(a^{-1}u)) = \Tr(v \varepsilon_F(u))$ for any nonzero $a$.
    This implies $\varepsilon_F(u) = a^d \varepsilon_F(a^{-1}u)$ because $u,v,a$ are arbitrary.
    Taking $c = \varepsilon_F(1)$, we then know that $\varepsilon_F(x) = c x^d$ for all $x \in \F_{2^n}$.

    Also, note that $m_u(v) = m_{u^2}(v^2)$ because $E \subseteq \set{0,u}^\perp$ is a 2-dimensional subspace such that $\sum_{x \in E} x^d = v$, then $\sum_{x \in E}(x^2)^d = v^2$ and $\set{x^2 : x \in E} \subseteq \{0,u^2\}^\perp$ with the latter condition holding because $\Tr(x^2u^2) =\Tr(xu)$ for all $x \in \F_{2^n}$.
    Therefore, $\Tr(v\varepsilon_F(u)) = \Tr(v^2 \varepsilon_F(u^2))$, and we have 
    $\Tr(cvu^d) = \Tr(cv^2 u^{2d}) = \Tr((cv^2u^{2d})^{2^{n-1}}) = \Tr(c^{2^{n-1}} v u^d)$.
    Since $\Tr((c+c^{2^{n-1}})vu^d) = 0$ holds for all nonzero $u,v$, we have $c = c^{2^{n-1}}$, i.e. $c \in \F_2$.
    Moreover, $c = 1$ because $\varepsilon_F(1)=c$ and $\varepsilon_F$ only takes value $0$ at $0$ since $F$ is quadratic.
    Therefore, $\varepsilon_F(x) = x^d=F(x)$ for all $x\in \F_{2^n}$.
\end{proof}

The above proposition tells us that $\varepsilon_F=F$ when $F$ is a Gold APN function (and $n \geq 4$), and so we say that $F$ is \textit{exclude parity self-adjoint}.
This leads to the following interesting problem.

\begin{openp}
Find new examples of quadratic APN functions that are exclude parity self-adjoint.
\end{openp}

It is also worth noting that \Cref{prop:Gold-adj} provides an infinite family of exclude parity adjoints of quadratic APN functions that are all permutations.
For $n \geq 4$ and a quadratic APN function $F$, the condition that $\varepsilon_F$ is a permutation is (by definition) equivalent to the condition that for any $v \neq 0$, the exclude parity functions $f_v(u) = (m_u(v) \mod 2)$, where $v \neq 0$, are balanced.

Gold functions being exclude parity self-adjoint also has additional consequences on their ortho-derivatives due to \Cref{thm:crooked-function-ortho-Walsh}.
Let $F \colon \F_{2^n} \to \F_{2^n}$ be the Gold APN function $F(x) = x^{2^i+1}$, where $\gcd(i,n)=1$.
As recalled in \Cref{prop:Gold-adj}, we know $\Tr(v \pi_F(u)) = \Tr(\frac{v}{u^{2^i+1}})$ for all nonzero $u,v \in \F_{2^n}$, and this implies that $\pi_F(x) = x^{-(2^i+1)}$ for all $x \in \F_{2^n}$ (we still write $0^{-1} := 0$). 
However, note that this provides 
\[
    W_{\pi_F}(u,v) = \sum_{x \in \F_{2^n}}(-1)^{\Tr(ux + vx^{-(2^i+1)})}
    = 
     \sum_{x \in \F_{2^n}} (-1)^{\Tr(vx^{2^i+1} + ux^{-1})}, 
\]
via the change of variables $x \mapsto x^{-1}$, and this latter summation is closely related to the exponential sum
\[
    G^{(i)}_n = \sum_{x \in \F_{2^n}^\ast} (-1)^{\Tr(x^{2^i+1} + x^{-1})}
\]
from \cite{JohansenHellesethKholoshamseq}.
The following conjecture from 2009 is still an open problem.

\begin{conjecture}[\cite{JohansenHellesethKholoshamseq}]\label{conj:mseq-general}
    For any positive integer $i$, 
    $G^{(i)}_n = G^{(\gcd(i,n))}_n$,
    i.e. $G^{(i)}_n$ only depends on $\gcd(i,n)$.
    In particular, when $\gcd(i,n)=1$ then 
    \[
    G^{(i)}_n = \sum_{x \in \F_{2^n}^\ast}(-1)^{\Tr(x^3 + x^{-1})}.
    \]
\end{conjecture}
In \cite{JohansenHellesethKholoshamseq}, it was shown that \Cref{conj:mseq-general} holds when $n$ is odd and $i=2$,
and it was also shown \cite[Proposition 2]{LiuHarrisonLuo} that \Cref{conj:mseq-general} holds when $i=3$ and $n \not\equiv 0 \pmod 3$ (without the assumption that $n$ is odd).
However, \Cref{conj:mseq-general} remains open in its full generality.

We derive a partial result on the above problem as a corollary of \Cref{thm:crooked-function-ortho-Walsh} and \Cref{prop:Gold-adj}.
In particular, we prove that 
\[
G^{(i)}_n \equiv G^{(1)}_n \pmod{16}
\]
for all $n \geq 4$ when $\gcd(i,n)=1$. 
We do this by first determining the congruence class of $W_{\pi_F}(u,v)$ modulo $16$ when $F$ is a Gold APN function.

\begin{proposition}\label{prop:Gold-expsum-congruence}
    Let $n \geq 4$, and let $F \colon \F_{2^n} \to \F_{2^n}$ be the Gold APN function $F(x) =x^{2^i+1}$ where $\gcd(i,n)=1$.
    For any $u,v \in \F_{2^n}^\ast$, we have  
    \[
   W_{\pi_F}(u,v)
    \equiv
     \begin{cases}
         8 \Tr(vu^{2^i+1}) & \text{ if $n$ is odd}, \\ 
        8(\Tr(vu^{2^i+1})+1) & \text{ if $n=4$ and $v$ is a cube}, \\ 
        4+8\Tr(vu^{2^i+1})  & \text{ if $n=4$ and $v$ is not a cube}, \\ 
        8\Tr(vu^{2^i+1})  & \text{ if $n=6$ and $v$ is a cube}, \\ 
        8(\Tr(vu^{2^i+1})+1)  & \text{ if $n=6$ and $v$ is not a cube}, \\ 
        8 \Tr(vu^{2^i+1}) & \text{ if $n \geq 8$ is even},
     \end{cases}
     \pmod{16}.
    \]
\end{proposition}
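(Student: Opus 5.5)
We start from \Cref{thm:crooked-function-ortho-Walsh} with $t=0$. For $u\neq 0$ it gives
\[
W_{\pi_F}(u,v)=8m_u(v)-2m(v)+2 .
\]
Reducing modulo $16$, the term $8m_u(v)$ only depends on the parity of $m_u(v)$. By the definition of the exclude parity adjoint and \Cref{prop:Gold-adj},
\[
m_u(v)\equiv f_v(u)\equiv 1\oplus \Tr(v\,\varepsilon_F(u))=1\oplus\Tr(vu^{2^i+1}) \pmod 2 .
\]
Hence
\[
W_{\pi_F}(u,v)\equiv 8\Tr(vu^{2^i+1})+8-2m(v)+2 \pmod{16},
\]
and the task reduces to determining $2m(v)-10 \bmod 16$, that is, $m(v)\bmod 8$. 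We already know $m(v)$ is odd by \Cref{prop:plateaued-oddmults}.

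For $m(v)$ we use the $u=0$ case of \Cref{thm:crooked-function-ortho-Walsh}, or equivalently \cref{eq:ortho-weight-mult}:
\[
3m(v)=2^n-1-\wt(v\cdot\pi_F).
\]
When $n$ is odd, $F$ is AB and $m(v)=\frac{2^{n-1}-1}{3}$ (as in the proof of \Cref{cor:crooked-AB-invertibleOrtho}). Then
\[
2m(v)-10=\frac{2^n-32}{3},
\]
which is divisible by $16$ for $n\ge 5$ odd, because $3\cdot\frac{2^n-32}{3}=2^n-32\equiv 0\pmod{16}$. This gives the first case.

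For even $n$ we use the amplitude formula from \Cref{rem:plateaued-restriction-mults}:
\[
m(v)=\frac{1}{6\cdot 2^n}\sum_b(-1)^{v\cdot b}\lambda_b^2 .
\]
For the Gold function $x^{2^i+1}$ with $n$ even and $\gcd(i,n)=1$, the component $\Tr(bx^{2^i+1})$ is semi-bent when $b$ is a cube and bent otherwise (well known). So, writing $C$ for the set of nonzero cubes,
\[
6\cdot 2^n m(v)=2^{2n}+2^n\Big(\sum_{b\neq0}(-1)^{\Tr(vb)}+3\sum_{b\in C}(-1)^{\Tr(vb)}\Big)=2^{2n}+2^n\big(-1+3S_v\big),
\]
with $S_v=\sum_{b\in C}(-1)^{\Tr(vb)}$, i.e. $m(v)=\frac{2^n-1+3S_v}{6}$. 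It remains to know $S_v \bmod 16$.

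We compute $S_v$ from the cubic Gauss-type sum $\sum_x(-1)^{\Tr(vx^3)}=1+3S_v$. For $n$ even this sum equals $\pm2^{n/2+1}$ if $v$ is a cube and $\mp 2^{n/2}$ otherwise; the sign is fixed by the classical evaluation via Stickelberger, which gives $(-1)^{n/2+1}$ times those values. For $n\ge 8$, $2^{n/2}\equiv 0\pmod{16}$, so $3S_v\equiv -1\pmod{16}$ and $m(v)\equiv\frac{2^n-2}{6}\pmod 8$. Therefore
\[
2m(v)-10\equiv\frac{2^n-32}{3}\equiv 0\pmod{16},
\]
giving the last case. For $n=4$ and $n=6$ we plug in the explicit values, $\pm 8$ or $\mp 4$ for $n=4$, and $\pm16$ or $\mp 8$ for $n=6$, and read off the stated constants $8$, $4$, $0$, $8$. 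In the small cases it is safest to compute $m(v)$ directly, e.g. $\wt(v\cdot\pi_F)$ with $\pi_F(x)=x^{-3}$.

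The main obstacle is getting the exact signs of the cubic sums in these small even cases, since only they affect residues modulo $16$. We would settle them either through the Stickelberger sign or by a direct count.
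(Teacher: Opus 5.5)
Your argument is correct and follows the paper's proof. Both start from $W_{\pi_F}(u,v)=8m_u(v)-2m(v)+2$ (\Cref{thm:crooked-function-ortho-Walsh}), get the parity of $m_u(v)$ from $\varepsilon_F=F$ (\Cref{prop:Gold-adj}), use the AB value of $m(v)$ for odd $n$, and reduce $-2m(v)+2$ modulo $16$ using that $3$ is invertible. The only difference is for even $n$: the paper quotes $m(v)=\frac{2^n+(-2)^{n/2+1}-2}{6}$ (resp. $\frac{2^n+(-2)^{n/2}-2}{6}$) for $v$ a cube (resp. a non-cube) from \cite{MihailaThornburgh2026}, whereas you rederive exactly this from \Cref{rem:plateaued-restriction-mults}, the cube/non-cube classification of semi-bent and bent Gold components, and the classical value $\sum_x(-1)^{\Tr(vx^3)}=(-2)^{n/2+1}$ (resp. $(-2)^{n/2}$). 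The signs you state are the correct ones, so the hedging at the end about the small cases $n=4,6$ is unnecessary.
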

\begin{proof}
    Fix $u,v \in \F_{2^n}^\ast$, and let $F \colon \F_{2^n} \to \F_{2^n}$ be the Gold APN function $F(x) =x^{2^i+1}$.
    By \Cref{thm:crooked-function-ortho-Walsh}, we have $W_{\pi_F}(u,v) = 8m_u(v) -2m(v)+2$.  
    Also, $m_u(v) \equiv 1 \pmod 2$ if and only if $\Tr(vu^{2^i+1})=\Tr(v \varepsilon_F(u))=0$ by \Cref{prop:Gold-adj}.
    If $n$ is odd, then $m(v) = \frac{2^{n-1}-1}{3}$ because $F$ is AB (see \cite{vandamflass}), and so 
    \[
    W_{\pi_F}(u,v) = 8m_u(v) - \frac{2^n-2}{3} +2 \equiv 8(\Tr(vu^{2^i+1})+1) + 8 \equiv 8 \Tr(vu^{2^i+1}) \pmod{16}.
    \]
    Now, assume $n$ is even.
    From \cite[Corollary 4.18]{MihailaThornburgh2026}, we know 
    \[
    m(v) = 
    \begin{cases}
         \frac{2^n + (-2)^{\frac{n}{2}+1} -2 }{6} & v \text{ is a cube}, \\
         \frac{2^n + (-2)^{\frac{n}{2}}-2}{6} & v \text{ is not a cube}.
    \end{cases}
    \]
    Let us determine $-2m(v) + 2 \pmod{16}$ in both cases.
    Note that $3^{-1} \equiv 11 \pmod{16}$.
    So, if $v$ is a cube, then 
    \[
    -2m(v) + 2 = \frac{8-2^n - (-2)^{\frac{n}{2}+1}}{3} 
    \equiv 11(8-(-2)^{\frac{n}{2}+1}) \pmod{16},
    \]
    which is clearly congruent to $0$ if $n =4$ and $8$ if $n \geq 6$.
    On the other hand, if $v$ is not a cube, then 
    \[
    -2m(v) + 2 = \frac{8-2^n -(-2)^{\frac{n}{2}}}{3} \equiv 11(8-(-2)^{\frac{n}{2}}) \pmod{16},
    \]
    which is congruent to $12, 0,$ or $8$ if $n=4$, $n=6$, or $n \geq 8$, respectively.
    The result follows from the above cases and the congruence $W_{\pi_F}(u,v) = 8(\Tr(vu^{2^i+1}) + 1)  -2m(v) + 2 \pmod{16}$.
\end{proof}

We now prove the claimed result on $G^{(i)}_n$ when $\gcd(i,n)=1$.

\begin{corollary}
    Let $i$ be a positive integer such that $\gcd(i,n)=1$.
    When $n \geq 4$, for any nonzero $u,v \in \F_{2^n}^\ast$, we have
    \[
    \sum_{x \in \F_{2^n}^\ast} (-1)^{\Tr(vx^{2^i+1} + ux^{-1})} \equiv 
    \sum_{x \in \F_{2^n}^\ast} (-1)^{\Tr(vx^3 + ux^{-1})} \pmod{16}
    \]
    if and only if $\Tr(v(u^{2^i+1}+u^3))=0$.
    In particular, $G^{(i)}_n \equiv G^{(1)}_n \pmod{16}$ for all $n$.
\end{corollary}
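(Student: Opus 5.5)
The plan is to express both exponential sums as Walsh coefficients of ortho-derivatives of Gold functions, and then read off their residues modulo $16$ from \Cref{prop:Gold-expsum-congruence}. Fix nonzero $u,v$. As computed just before \Cref{conj:mseq-general}, for $F_i(x)=x^{2^i+1}$ we have $\pi_{F_i}(x)=x^{-(2^i+1)}$, and the change of variables $x\mapsto x^{-1}$ gives
\[
W_{\pi_{F_i}}(u,v)=\sum_{x\in\F_{2^n}}(-1)^{\Tr(vx^{2^i+1}+ux^{-1})}=1+\sum_{x\in\F_{2^n}^\ast}(-1)^{\Tr(vx^{2^i+1}+ux^{-1})},
\]
where the term $x=0$ contributes $1$ because $0^{-1}:=0$. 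The same holds with $i=1$, and $\gcd(1,n)=1$, so $x^3$ is also a Gold APN function. Hence the two sums in the statement differ by exactly $W_{\pi_{F_i}}(u,v)-W_{\pi_{F_1}}(u,v)$, and the claimed congruence is equivalent to $W_{\pi_{F_i}}(u,v)\equiv W_{\pi_{F_1}}(u,v)\pmod{16}$.

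Next we apply \Cref{prop:Gold-expsum-congruence} to both $F_i$ and $F_1$. In every case listed there, the residue has the form $c(n,v)+8\Tr(vu^{2^j+1})$ modulo $16$, with $j=i$ or $j=1$. The term $c(n,v)\in\{0,4,8\}$ depends only on $n$ and on whether $v$ is a cube; it does not depend on $j$. Subtracting, the difference is
\[
W_{\pi_{F_i}}(u,v)-W_{\pi_{F_1}}(u,v)\equiv 8\bigl(\Tr(vu^{2^i+1})-\Tr(vu^3)\bigr)\equiv 8\,\Tr\bigl(v(u^{2^i+1}+u^3)\bigr)\pmod{16}.
\]
Here we use that $\Tr$ is additive, and that any integer $8a$ with $a\in\{0,1\}$ is determined modulo $16$ by $a \bmod 2$. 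This difference is $0$ modulo $16$ exactly when $\Tr(v(u^{2^i+1}+u^3))=0$, which proves the equivalence.

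For the final claim, take $u=v=1$. Then $u^{2^i+1}+u^3=1+1=0$, so the trace condition holds and we obtain $G^{(i)}_n\equiv G^{(1)}_n\pmod{16}$ for $n\ge4$. The only step that needs real care is checking that the case split in \Cref{prop:Gold-expsum-congruence} gives the same constant $c(n,v)$ for both exponents. It does, because the cube/non-cube distinction refers only to $v$ and to $\F_{2^n}$, not to $i$.
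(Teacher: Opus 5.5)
Your argument matches the paper's proof. You rewrite both sums as $W_{\pi_{F}}(u,v)-1$ for the Gold functions $x^{2^i+1}$ and $x^3$, using $\pi_F(x)=x^{-(2^i+1)}$ and the substitution $x\mapsto x^{-1}$. You then read off the difference modulo $16$ from \Cref{prop:Gold-expsum-congruence}, where the constant term cancels. Your explicit check that this constant depends only on $n$ and on whether $v$ is a cube, and not on $i$, is the step the paper leaves implicit.

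The one omission is the $n\le 3$ part of ``for all $n$'' in the final sentence. Your argument only gives $n\ge 4$, and the paper handles $n\le3$ by a computer check. You can close it by hand:
\begin{itemize}
\item $G^{(i)}_n$ depends only on $i \bmod n$, since Frobenius has order $n$ on $\F_{2^n}$.
\item $G^{(n-i)}_n=G^{(i)}_n$, because $\Tr(x^{2^{n-i}+1})=\Tr\bigl((x^{2^{n-i}+1})^{2^i}\bigr)=\Tr(x^{2^i+1})$.
\item Every $i$ coprime to $n\le 3$ satisfies $i\equiv\pm1\pmod n$.
\end{itemize}
Together these give $G^{(i)}_n=G^{(1)}_n$ exactly for $n\le 3$.
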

\begin{proof}
    Taking $F_i(x) = x^{2^i+1}$ and $F(x) = x^3$, note that 
    \begin{align*}
        W_{\pi_{F_i}}(u,v) &= \sum_{x \in \F_{2^n}^\ast} (-1)^{\Tr(vx^{2^i+1} + ux^{-1})} +1, \\
        W_{\pi_F}(u,v) &= \sum_{x \in \F_{2^n}^\ast} (-1)^{\Tr(vx^3 + ux^{-1})}+1,
    \end{align*}
    for any $u,v \in \F_{2^n}^\ast$.
    The first statement immediately follows from \Cref{prop:Gold-expsum-congruence}.
    Meanwhile, this provides $G^{(i)}_n \equiv G^{(1)}_n \pmod{16}$ by taking $u=v=1$ (with the cases of $n \leq 3$ being easily verified by computer calculations).
\end{proof}

We can also characterize when we have equality between $G^{(i)}_n$ and $G^{(1)}_n$, when $\gcd(i,n)=1$, in the following combinatorial manner.

\begin{proposition}\label{prop:equiv-problem-mseq}
    Let $F,F_i \colon \F_{2^n} \to \F_{2^n}$ be the Gold APN functions defined by $F(x)=x^3$ and $F_i(x)=x^{2^i+1}$, where $\gcd(i,n)=1$.
    Then $G^{(i)}_n = G^{(1)}_n$ if and only if 
    $\mult_{\graph{F|_H}}(0,1) = \mult_{\graph{F_i|_H}}(0,1)$ where $H = \set{x \in \F_{2^n} : \Tr(x)=0}$, or equivalently, 
    \[
    |\{(a,b) \in H^2 : a^{2^i}b + ab^{2^i}=1\}| = |\{(a,b) \in H^2 : a^2b + ab^2=1\}|.
    \]
\end{proposition}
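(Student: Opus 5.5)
We reduce the equality $G^{(i)}_n = G^{(1)}_n$ to a statement about Walsh coefficients of the ortho-derivatives, and then use \Cref{thm:crooked-function-ortho-Walsh} to turn those coefficients into exclude multiplicities on the trace-zero hyperplane $H$.

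\textbf{Step 1 (exponential sums as Walsh values).} As in the proof of the preceding corollary, we have
\[
G^{(i)}_n = W_{\pi_{F_i}}(1,1) - 1 \quad\text{and}\quad G^{(1)}_n = W_{\pi_F}(1,1) - 1,
\]
using $\pi_{F_i}(x) = x^{-(2^i+1)}$ and the change of variables $x \mapsto x^{-1}$. So it suffices to compare $W_{\pi_{F_i}}(1,1)$ with $W_{\pi_F}(1,1)$.

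\textbf{Step 2 (apply \Cref{thm:crooked-function-ortho-Walsh}).} Take $u=v=1$ and $t=0$, so that $H_{1,0} = \set{0,1}^\perp = \set{x : \Tr(x)=0} = H$. The theorem gives
\[
W_{\pi_{F_i}}(1,1) = 8\mult_{\graph{F_i|_H}}(0,1) - 2\mult_{\graph{F_i}}(0,1) + 2,
\]
and the same identity holds with $F$ in place of $F_i$. Note that $F_i(0)=0$, so the point $(t,F(t)+v)$ is $(0,1)$.

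\textbf{Step 3 (the full-space multiplicities agree).} It remains to show $\mult_{\graph{F_i}}(0,1) = \mult_{\graph F}(0,1)$; then the $m(v)$ terms cancel and the first equivalence follows.
\begin{itemize}
\item For $n$ odd, both $F$ and $F_i$ are AB. Hence both multiplicities equal $\frac{2^{n-1}-1}{3}$ by the van Dam--Fon-Der-Flaass characterization.
\item For $n$ even, the formula quoted from \cite[Corollary 4.18]{MihailaThornburgh2026} in the proof of \Cref{prop:Gold-expsum-congruence} applies to every Gold function with $\gcd(i,n)=1$. It depends only on whether $v=1$ is a cube, and $1$ is always a cube, so the two multiplicities coincide.
\end{itemize}
Alternatively, both cases follow at once from \Cref{rem:plateaued-restriction-mults}: the value depends only on the amplitude distribution of the components, and Gold functions with $\gcd(i,n)=1$ share it. Bookkeeping this uniformly in $n$ is the step needing the most care, and citing the known multiplicity values is the cleanest route.

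\textbf{Step 4 (counting reformulation).} Here we use \cref{eq:mLuv-betaF}: $3\mult_{\graph{F_i|_H}}(0,1)$ is the number of unordered pairs $\set{a,b} \subseteq H$ with $\beta_{F_i}(a,b) = 1$. Since $F_i(0)=0$, we have
\[
\beta_{F_i}(a,b) = F_i(a) + F_i(b) + F_i(a+b) = a^{2^i}b + ab^{2^i}.
\]
This quantity vanishes whenever $a=b$ or either of $a,b$ is $0$. Therefore the ordered count $|\set{(a,b)\in H^2 : a^{2^i}b+ab^{2^i}=1}|$ equals $6\mult_{\graph{F_i|_H}}(0,1)$. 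The same holds for $F$ with exponent $2$ in place of $2^i$, which gives the second equivalence.
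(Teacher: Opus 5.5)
Your proposal is correct and follows the paper's proof essentially step for step. You write both sums as $W_{\pi}(1,1)-1$, apply \Cref{thm:crooked-function-ortho-Walsh} at $u=v=1$, $t=0$, cancel the equal full-space multiplicities $\mult_{\graph{F_i}}(0,1)=\mult_{\graph{F}}(0,1)$ via the AB value and \cite[Corollary 4.18]{MihailaThornburgh2026}, and translate using $\beta_{F_i}(a,b)=a^{2^i}b+ab^{2^i}$; your observation that the ordered count equals exactly $6\mult_{\graph{F_i|_H}}(0,1)$ makes the final equivalence a bit more explicit than in the paper. (In your side remark, the multiplicity depends on the map $v\mapsto\lambda_v$ rather than only on the amplitude distribution, but this map is the same for all Gold functions with $\gcd(i,n)=1$, so nothing breaks.)
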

\begin{proof}
    By similar reasoning as seen in \Cref{prop:Gold-expsum-congruence}, we have
    \begin{align*}
        G^{(i)}_n &= W_{\pi_{F_i}}(1,1)-1 = 8\mult_{\graph{F_i|_H}}(0,1) -2\mult_{\graph{F_i}}(0,1) + 2, \\
        G^{(1)}_n &= W_{\pi_F}(1,1)-1 = 8\mult_{\graph{F|_H}}(0,1) -2\mult_{\graph{F}}(0,1) + 2.
    \end{align*}
    The first claim follows by noting that $\mult_{\graph{F_i}}(0,1) = \mult_{\graph{F}}(0,1)$ (this is easily seen from the results of \cite{vandamflass,MihailaThornburgh2026} that we recalled in \Cref{prop:Gold-expsum-congruence}).
    Furthermore, the value of $\mult_{\graph{F_i|_H}}(0,1)$ is equal to the number of 2-dimensional subspaces $E$ contained in $H$ such that $\sum_{x \in E} x^{2^i+1} = 1$.    
    However, note that if $E=\langle a,b \rangle$, then $\sum_{x \in E} x^{2^i+1} = a^{2^i+1} + b^{2^i+1} + (a+b)^{2^i+1} = a^{2^i}b + ab^{2^i}$, and so the final equivalence holds too.
\end{proof}

\subsection{The Kim APN function}
It is a well-known result of \cite{browningMcQuisttanWolfe} that the Kim function $\kappa \colon \F_{2^6} \to \F_{2^6}$ defined by $\kappa(x) = x^3 + x^{10} + \alpha x^{24}$, where $\alpha \in \F_{2^6}$ is a primitive element, is CCZ equivalent to a permutation, and the CCZ equivalence class of $\kappa$ contains the only known examples of APN permutations in an even number of variables.
In this subsection, we note a remarkable property of the exclude parity adjoint of $\kappa$.

A \textit{partial difference set} $D \subseteq G$ of an additive group $G$ has the property that every non-identity element of $D$ (resp. $G \setminus D$) can be written as $x-y$ with distinct elements $x,y \in D$ exactly $\lambda$ (resp. $\mu$) ways, and we say that the parameters of $D$ are $(v,k,\lambda, \mu)$ where $v=|G|$ and $k=|D|$.
Many examples of plateaued APN functions have PDSs as their images.
Such functions include plateaued APN functions that are 3-to-1 on $\F_2^n$ minus a single point \cite[Theorem 4]{Budaghyan2023} (note that \cite[Theorem 4]{Budaghyan2023} is written for crooked 3-to-1 functions but clearly generalizes to plateaued 3-to-1 functions).

It also is well-known that $\kappa$ has a PDS as its image, and it has parameters $(64,36,20,20)$.
However, interestingly, $\im(\pi_\kappa)$ is a PDS with parameters $(64, 22, 10,6)$ and $\im(\varepsilon_\kappa)$ is a PDS with the same parameters as $\im(\kappa)$.
Furthermore, there exists a linear automorphism of $\F_{2^6}$ mapping $\im(\kappa)$ to $\im(\varepsilon_\kappa)$.
Indeed, this is due to a nontrivial relation as $\varepsilon_\kappa$ is differentially 4-uniform and not APN.

\section{On the algebraic degrees of Boolean functions associated to the components of plateaued APN functions and their restrictions}\label{sec:fcns-assoc-to-cpts}

In this section, we consider various indicator functions related to the component functions of plateaued APN functions (with the primary focus being on quadratic APN functions).
For any function $F \colon \F_2^n \to \F_2^m$, if $n$ is even, we let 
\[
\bentcomps{F} = \set{v \in \F_2^m : v \cdot F \text{ is bent}}.
\]
If $n$ is odd, let 
\[
\nearbentcomps{F}  =\set{v \in \F_2^m : v \cdot F \text{ is near-bent}},
\]
where a function is defined to be \textit{near-bent} if it has linearity $2^{\frac{n+1}{2}}$.

We recall the following two motivating examples seen in the previous section.
The first was \Cref{lem:indbentcpts-upperbound}, where we saw that $\deg_{alg}(1_{\bentcomps{F}}) \leq \min\set{m,\frac{n}{2}}$ when $F$ is an $(n,m)$-function that is plateaued, with $n$ being even.
The second was in our proof of \Cref{thm:parityadjoint}, where we saw that $\deg_{alg}(1_{\bentcomps{F|_H}}) = \deg_{alg}(e \cdot \pi_F^{-1}) \leq \frac{n-1}{2}$ when $n \geq 5$ is odd, $F$ is a quadratic APN function, and $H$ is an affine hyperplane with underlying vector space $\set{0,e}^\perp$.

Let us first consider plateaued APN functions of an even number of variables. 
It is well-known that if $F \colon \F_2^n \to \F_2^n$ is plateaued APN and $n \geq 4$ is even, then $|\bentcomps{F}| \equiv 2 \pmod 4$ (to the best of the authors' knowledge, this fact was first observed \cite[Proof of Theorem 2]{Mesnager2019}, and it was also later proven in \cite[Proposition 6.5]{KolschPolujan2026}).
Then, in \cite[Proof of Proposition 4.14]{MihailaThornburgh2026} it was shown that for $n\geq 4$ even and a plateaued APN function $F \colon \F_2^n \to \F_2^n$, we have $\widehat{1_{\bentcomps{F}}}(u) \equiv 2 \pmod 4$ for all $u \in \F_2^n$.
In the following lemma, we demonstrate that $|\bentcomps{F}|\equiv 2 \pmod 4$ implies $1_{\bentcomps{F}}$ has algebraic degree at least $\frac{n}{2}$ by recalling a well-known theorem of McEliece.

\begin{lemma}\label{lem:indbentcpts-lowerbound}
    Assume $n \geq 4$ is even, and let $F \colon \F_2^n \to \F_2^n$ be a plateaued APN function.
    Then $\deg_{alg}(1_{\bentcomps{F}}) \geq \frac{n}{2}$.
\end{lemma}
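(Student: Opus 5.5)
We will use McEliece's theorem, as the paragraph before the statement suggests. It says that if a Boolean function $g$ on $\F_2^n$ has algebraic degree $d\ge 1$, then every value of its Walsh transform $W_g$ is divisible by $2^{\lceil n/d\rceil}$. The same holds for the Fourier--Hadamard transform of its support, since $W_g(u)=2^n1_{\set{0}}(u)-2\widehat{g}(u)$. In particular $\wt(g)=\widehat{g}(0)$ is divisible by $2^{\lceil n/d\rceil-1}$.

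We apply this to $g=1_{\bentcomps{F}}$. First we check that $g$ is not constant. By the fact recalled above (from \cite{Mesnager2019,KolschPolujan2026}), $|\bentcomps{F}|\equiv 2\pmod 4$. Hence $\bentcomps{F}$ is nonempty. It is also not all of $\F_2^n$, since $0\cdot F$ is constant and not bent. So $d=\deg_{alg}(g)\geq 1$.

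McEliece's theorem then gives $2^{\lceil n/d\rceil-1}\mid \wt(g)=|\bentcomps{F}|$. Since $|\bentcomps{F}|\equiv 2\pmod 4$, we get $\lceil n/d\rceil-1\le 1$, that is, $\lceil n/d\rceil\leq 2$. This means $n/d\le 2$, so $d\geq n/2$, which is the claim.

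A slightly stronger input is also available: $\widehat{1_{\bentcomps{F}}}(u)\equiv 2\pmod 4$ for all $u$ (from \cite{MihailaThornburgh2026}). This gives the same conclusion through the Walsh-divisibility form of McEliece's theorem, so either version suffices. The main obstacle is only to state McEliece's theorem in the correct weight-divisibility form, $2^{\lceil n/d\rceil -1}\mid \wt(g)$, and to verify that $g$ is nonconstant. Both are routine once the congruence $|\bentcomps{F}|\equiv 2 \pmod 4$ is invoked.
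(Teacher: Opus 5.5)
Your proof is correct and is essentially the paper's argument: McEliece's weight-divisibility theorem combined with $|\bentcomps{F}|\equiv 2 \pmod 4$. Your exponent $2^{\lceil n/d\rceil-1}$ is the same as the paper's $2^{\lfloor (n-1)/r\rfloor}$, and your explicit check that $1_{\bentcomps{F}}$ is nonconstant (so that $d\geq 1$ and the exponent is defined) fills in a small detail the paper leaves implicit.
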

\begin{proof}
    McEliece's theorem \cite{McEliece1972} states every codeword of the Reed-Muller code $RM(r,n)$ (the codewords of $RM(r,n)$ are equivalently defined as the $n$-variable functions of algebraic degree at most $r$) has weight divisible by $2^{\floor{\frac{n-1}{r}}}$.
    So, for an $n$-variable Boolean function $f$ of algebraic degree $r < \frac{n}{2}$, we know that $2r \leq n-1$, and so $2^{\floor{\frac{n-1}{r}}} \geq 4$.
    Since $n \geq 4$, we know that $|\bentcomps{F}|\equiv 2 \pmod 4$, implying $1_{\bentcomps{F}}$ has algebraic degree at least $\frac{n}{2}$.
\end{proof}

\begin{remark}
    \Cref{lem:indbentcpts-upperbound} provides an upper bound on $\deg_{alg}(1_{\bentcomps{F}})$ for a plateaued function $F$, for $n$ even, but \Cref{lem:indbentcpts-upperbound} does not provide an explicit argument for this.
    However, assuming $F$ is a quadratic $(n,m)$-function and $n$ is even, we can explicitly show that $1_{\bentcomps{F}}$ is equal to a sum of products of $\frac{n}{2}$ linear Boolean functions when $n$ is even and $F$ is quadratic.
    For all $v \in \F_2^m$, let $\ell_v \colon \F_2^n \to \F_2$ be the affine function such that for all $x \in \F_2^n$, we have
    $v\cdot F(x) =\ell_v(x) \oplus \bigoplus_{1 \leq i < j \leq n}a_{i,j}^v x_ix_j$
    for some $a_{i,j}^v \in \F_2$.
    For any $v \in \F_2^m$, let $M_v$ be the symmetric $n \times n$ matrix with zero diagonal such that its $(i,j)$-entry with $i < j$ is $a_{i,j}^v$.
    It is known \cite[Section 6.1.13]{CarletBook} that $M_v$ is invertible if and only if $v \cdot F$ is bent  because the kernel of $M_v$ is the linear kernel of $v \cdot F$ (and since $v \cdot F$ is partially-bent we know it is bent if and only if its linear kernel is equal to $\set{0}$).
    That is, we have $1_{\bentcomps{F}}(v) = \det(M_v)$, for all $v \in \F_2^m$.
    By the Leibniz formula, we have $\det(M_v) = \sum_{\sigma \in S_n} \prod_{k=1}^n (M_v)_{k, \sigma(k)}$, where $S_n$ denotes the symmetric group of $\set{1, \dots, n}$.
    Note that if $\sigma \in S_n$ has a fixed point, then $\prod_{k=1}^n(M_v)_{k,\sigma(k)} =0$ since $M_v$ has zero diagonal.
    Also, if $\sigma$ has a cycle $C=(i_1, \dots, i_r)$ of length at least $3$, then the permutation $\tau \neq \sigma$ obtained by reversing $C$ satisfies 
    $\prod_{k=1}^n (M_v)_{k, \sigma(k)} = \prod_{k=1}^n (M_v)_{k, \tau(k)}$ because 
    \[
    (M_v)_{i_1, i_2}(M_v)_{i_2, i_3} \cdots (M_v)_{i_r, i_1} = (M_v)_{i_1, i_r} (M_v)_{i_r, i_{r-1}} \cdots  (M_v)_{i_2, i_1}.
    \]
    Therefore, denoting by $T$ the subset of $S_n$ consisting of all involutions that have no fixed points, we have
    $\det(M_v) = \sum_{\sigma \in T} \prod_{k=1}^n (M_v)_{k, \sigma(k)}$.
    For $\sigma = (i_1, j_1) \cdots (i_{n/2}, j_{n/2}) \in T$, we have 
    \[
     \prod_{k=1}^n (M_v)_{k, \sigma(k)}
    = \prod_{\ell=1}^{n/2} (M_v)_{i_\ell, j_\ell}(M_v)_{j_\ell, i_\ell}
    = \prod_{\ell=1}^{n/2} (M_v)_{i_\ell, j_\ell} 
    = \prod_{\ell=1}^{n/2} a^v_{\min\set{i_\ell, j_\ell}, \max\set{i_\ell, j_\ell}}.
    \]
    Note that for any $1 \leq i < j \leq n$, the $m$-variable Boolean function $v \mapsto a_{i,j}^v$ is linear.
    Since $1_{\bentcomps{F}}(v) = \det(M_v)$ for all $v \in \F_2^m$, it follows that $1_{\bentcomps{F}}$ is the sum of products of $\frac{n}{2}$ linear functions, and so $\deg_{alg}(1_{\bentcomps{F}}) \leq \frac{n}{2}$.
\end{remark}

We now completely determine the algebraic degree of the indicator of the bent components of any plateaued APN function of an even number of variables.

\begin{proposition}\label{prop:platAPN-bentcomps-algdeg}
    Assume $n \geq 4$ is even, and let $F \colon \F_2^n \to \F_2^n$ be a plateaued APN function.
    Then $\deg_{alg}(1_{\bentcomps{F}}) = \frac{n}{2}$.
\end{proposition}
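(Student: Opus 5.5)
The plan is to sandwich $\deg_{alg}(1_{\bentcomps{F}})$ between two bounds that both equal $\frac{n}{2}$, using the two lemmas just established.

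For the upper bound, I would invoke \Cref{lem:indbentcpts-upperbound} with $m=n$. A plateaued APN $(n,n)$-function is in particular plateaued, and $n$ is even. The lemma therefore gives $\deg_{alg}(1_{\bentcomps{F}}) \leq \min\set{n,\frac{n}{2}} = \frac{n}{2}$. Recall that this lemma rests on the parity property from \cite{beneteau2025walshspectraquadraticapn}: every $(\frac{n}{2}+1)$-dimensional linear subspace meets $\bentcomps{F}$ in an even number of points. That property uses only plateauedness, so nothing beyond the hypotheses is needed here.

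For the lower bound, I would apply \Cref{lem:indbentcpts-lowerbound}, whose hypotheses are exactly ours ($n \geq 4$ even, $F$ plateaued APN). It gives $\deg_{alg}(1_{\bentcomps{F}}) \geq \frac{n}{2}$. The underlying mechanism is the congruence $|\bentcomps{F}| \equiv 2 \pmod 4$ combined with McEliece's divisibility theorem: a function of degree $r<\frac{n}{2}$ has weight divisible by $2^{\floor{\frac{n-1}{r}}} \geq 4$. One point to check is the degenerate case $r=0$. Here $1_{\bentcomps{F}}$ would be constant, so its weight would be $0$ or $2^n$, both divisible by $4$ since $n\geq 4$. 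This contradicts the congruence, so the lemma's conclusion holds even in that case.

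Combining the two inequalities yields $\deg_{alg}(1_{\bentcomps{F}}) = \frac{n}{2}$. I do not expect a real obstacle: all the substantive work is contained in the two lemmas. The only care needed is confirming that each lemma's hypotheses are met verbatim, which is the case here (the upper bound lemma needs only plateauedness and $n$ even, and the lower bound lemma needs plateaued APN with $n\ge 4$ even).
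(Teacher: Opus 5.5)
Your proposal is correct and is exactly the paper's proof: the upper bound comes from \Cref{lem:indbentcpts-upperbound} with $m=n$, and the lower bound from \Cref{lem:indbentcpts-lowerbound}. Your separate treatment of the constant case $r=0$ is a small gain in rigor over the paper's lemma, since McEliece's bound $2^{\floor{\frac{n-1}{r}}}$ is not literally defined there.
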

\begin{proof}
    Apply the upper and lower bounds in \Cref{lem:indbentcpts-upperbound} and \Cref{lem:indbentcpts-lowerbound}, respectively.
\end{proof}

\Cref{prop:platAPN-bentcomps-algdeg} implies that for $n \geq 4$ even and a plateaued APN $(n,n)$-function $F$, there exists an $\frac{n}{2}$-dimensional affine subspace $A \subseteq \F_2^n$ such that $|\bentcomps{F} \cap A|$ is odd (note that we can also require the stronger condition that $A$ is a linear subspace), and moreover, $\bentcomps{F}$ meets every $(\frac{n}{2}+1)$-dimensional affine subspace in an even number of points.
Moreover, in the case that $F$ is quadratic, we obtained an exact description of $1_{\bentcomps{F}}$ from the coefficients of quadratic terms in the ANFs of the components of $F$.

In the following proposition, we prove a similar result but for the near-bent component functions of the restrictions of quadratic APN functions to hyperplanes when $n$ is even.
The proof of the following result is more involved than the proof of \Cref{prop:platAPN-bentcomps-algdeg} because it will require the usage of \Cref{thm:parityadjoint}, as well as the fact that the component functions of the ortho-derivative of a quadratic APN function have algebraic degree $n-2$.

\begin{proposition}\label{prop:nearbent-restriction-ind}
    Let $n \geq 4$ be even, and let $F \colon \F_2^n \to \F_2^n$ be a quadratic APN function.
    Let $u \in \F_2^n$ be nonzero, and let $F'=F|_H$ where $H = \set{0,u}^\perp$.
    Then, $\deg_{alg}(1_{\nearbentcomps{F'}})=n-1$.
\end{proposition}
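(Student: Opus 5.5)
The plan is to write $1_{\nearbentcomps{F'}}$ as the sum of two indicators that the paper already controls, namely $1_{\bentcomps{F}}$ and $1_{X_u}$, and then decide its degree by parity counts on hyperplanes. Since $1_{\nearbentcomps{F'}}$ is an $n$-variable Boolean function, its degree is at most $n-1$ iff $|\nearbentcomps{F'}|$ is even. It is then equal to $n-1$ iff, in addition, some $(n-1)$-dimensional linear subspace $\set{0,v}^\perp$ meets $\nearbentcomps{F'}$ in an odd number of points. By Relation (2.4) of \cite{CarletBook}, as used in \Cref{lem:excludeparity-exists-iff-piFinv}, linear subspaces suffice.

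\textbf{Step 1 (identify $\nearbentcomps{F'}$).} Since $H$ has dimension $n-1$, which is odd, $b\cdot F'$ is near-bent iff its amplitude is $2^{n/2}$. I use the fact recalled before \Cref{prop:bentcomps-crookedAB-Walsh} and in the proof of \Cref{prop:n-even-excludeParityAdjExplicit}: $\lambda_{b,H}=\lambda_b$ if $u\in\mathcal{E}_{b\cdot F}^\perp$, and $\lambda_{b,H}=\lambda_b/2$ otherwise. Hence $b\cdot F'$ is near-bent in exactly two cases.
\begin{itemize}
\item $b\cdot F$ is bent. Then $\mathcal{E}_{b\cdot F}=\set{0}$, so $u\in\mathcal{E}_{b\cdot F}^\perp$ automatically and $\lambda_{b,H}=\lambda_b=2^{n/2}$.
\item $b\cdot F$ is semi-bent and $u\notin\mathcal{E}_{b\cdot F}^\perp$.
\end{itemize}
No other $b$ qualifies: $b=0$ gives amplitude $2^{n-1}>2^{n/2}$ because $n\ge4$, and any $b$ with $\lambda_b\ge 2^{n/2+2}$ gives $\lambda_{b,H}\ge 2^{n/2+1}$. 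Therefore
\[
1_{\nearbentcomps{F'}}=1_{\bentcomps{F}}+1_{X_u},
\]
a disjoint union, with $X_u$ as in \Cref{prop:n-even-excludeParityAdjExplicit}.

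\textbf{Step 2 (upper bound).} We know $|\bentcomps{F}|\equiv 2\pmod 4$ (the fact recalled before \Cref{lem:indbentcpts-lowerbound}) and $|X_u|\equiv 0\pmod 4$ by \Cref{prop:n-even-excludeParityAdjExplicit}. So $|\nearbentcomps{F'}|$ is even, and $\deg_{alg}(1_{\nearbentcomps{F'}})\le n-1$.

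\textbf{Step 3 (an odd hyperplane intersection).} Fix a nonzero $v$ and count modulo $2$.
\begin{itemize}
\item For the bent part, write $|\bentcomps{F}\cap\set{0,v}^\perp|=\frac12\bigl(|\bentcomps{F}|+\widehat{1_{\bentcomps{F}}}(v)\bigr)$. Both summands are $\equiv 2\pmod 4$, the second by \cite[Proof of Proposition 4.14]{MihailaThornburgh2026}. Hence this count is even.
\item For the semi-bent part, the proof of \Cref{prop:n-even-excludeParityAdjExplicit} gives $v\cdot\varepsilon_F(u)\equiv\frac12\sum_{b\in X_u}(-1)^{v\cdot b}\pmod 2$. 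Since $\sum_{b\in X_u}(-1)^{v\cdot b}=2|X_u\cap\set{0,v}^\perp|-|X_u|$ and $|X_u|/2$ is even, this yields $|X_u\cap\set{0,v}^\perp|\equiv v\cdot\varepsilon_F(u)\pmod 2$.
\end{itemize}
Adding the two counts gives $|\nearbentcomps{F'}\cap\set{0,v}^\perp|\equiv v\cdot\varepsilon_F(u)\pmod 2$.

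\textbf{Step 4 (conclusion).} Because $F$ is quadratic APN and $u\neq0$, the remark after \Cref{thm:parityadjoint} gives $\varepsilon_F(u)\ne0$. This is where Gorodilova's conjecture, via \Cref{cor:more-quadratic-cpts-closer-to-conjecture}, enters. Choose $v$ with $v\cdot\varepsilon_F(u)=1$. Then $\set{0,v}^\perp$ meets $\nearbentcomps{F'}$ in an odd number of points, which forces degree exactly $n-1$.

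The main obstacle I expect is Step 3. It requires the mod-$4$ control of $\widehat{1_{\bentcomps{F}}}$, and the parity identity for $X_u$ must be extracted carefully from the proof of \Cref{prop:n-even-excludeParityAdjExplicit}, including the use of $|X_u|\equiv0\pmod 4$.
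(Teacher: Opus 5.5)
Your proof is correct, but it is organized differently from the paper's. The paper never splits $\nearbentcomps{F'}$. It applies \Cref{rem:plateaued-restriction-mults} directly to $F'$, and since $\lambda_{b,H}^2=2^{n+k_{b,H}}$ with every $k_{b,H}$ even and $k_{b,H}=0$ exactly for $b\in\nearbentcomps{F'}$, it gets $2m_u(v)\equiv\widehat{1_{\nearbentcomps{F'}}}(v)\pmod 4$ for $v\neq 0$. The existence of $\varepsilon_F$ with $\varepsilon_F(u)\neq 0$ then shows that $\widehat{1_{\nearbentcomps{F'}}}$ takes both residues $0$ and $2$ modulo $4$ on nonzero $v$. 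This yields even weight and, whatever $|\nearbentcomps{F'}|$ is modulo $4$, some hyperplane meeting $\nearbentcomps{F'}$ in an odd number of points.

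You instead use the linear-kernel criterion to write $\nearbentcomps{F'}=\bentcomps{F}\sqcup X_u$. You then import the mod-$4$ facts on $\bentcomps{F}$, together with the congruence and $|X_u|\equiv 0\pmod 4$ from the proof of \Cref{prop:n-even-excludeParityAdjExplicit}. That proposition is itself derived from \Cref{rem:plateaued-restriction-mults} applied to $F$ and $F|_H$, so the engine is the same, and both arguments hinge on \Cref{thm:parityadjoint} together with $\varepsilon_F(u)\neq 0$.

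The paper's route is more self-contained, since nothing about $\bentcomps{F}$ or $X_u$ is needed. Yours gives sharper information: $|\nearbentcomps{F'}|\equiv 2\pmod 4$, an explicit description of $\nearbentcomps{F'}$, and the identity $|\nearbentcomps{F'}\cap\set{0,v}^\perp|\equiv v\cdot\varepsilon_F(u)\pmod 2$. That identity pins down exactly which hyperplanes witness degree $n-1$.

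One small simplification: the bent part needs no Fourier input. \Cref{lem:indbentcpts-upperbound} gives $\deg_{alg}(1_{\bentcomps{F}})\le n/2\le n-2$ for $n\geq 4$, so $\bentcomps{F}$ has even intersection with $\F_2^n$ and with every hyperplane.
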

\begin{proof}
For all $v \in \F_2^n$, let $\lambda_{v,H}$ denote the amplitude of $v \cdot F$ on $H$ (recall that $v \cdot F$ is plateaued on $H$ because it is partially-bent \cite{CarletThornburghRestrictions}), and let $k_{v,H}$ be an integer such that $\lambda_{v,H}^2 = 2^{n+k_{v,H}}$.
For any nonzero $v\in \F_2^n$, the function $v \cdot F|_H$ is in $n-1$ variables and $n-1$ is odd, so $\lambda_{v,H} \geq 2^{\frac{(n-1)+1}{2}}=2^{\frac{n}{2}}$ with equality if and only if $v \cdot F|_H$ is near-bent, i.e. $1_{\nearbentcomps{F'}}(v) = 1$.
Note that $k_{v,H}$ is non-negative and even for all $v \in \F_2^n$.
Letting $t \in  H$ and $v \in \F_2^n \setminus \set{0}$, we have by \Cref{rem:plateaued-restriction-mults} that
$6\mult_S(t,F(t)+v) 
= \sum_{b \in \F_2^n}(-1)^{ b\cdot v} 2^{k_{v,H}}$, where $S = \graph{F'}$, and this implies $2 \mult_S(t,F(t)+v) \equiv  \sum_{b \in \nearbentcomps{F'}}(-1)^{b\cdot v} \pmod 4$.
The latter sum is equal to $\widehat{1_{\nearbentcomps{F'}}}(v)$, so $\mult_S(t,F(t)+v)$ is odd if and only if $\widehat{1_{\nearbentcomps{F'}}}(v) \equiv 2 \pmod 4$.

By \Cref{thm:parityadjoint}, we know $F$ has an exclude parity adjoint $\varepsilon_F$, implying 
\[
\set{v \in \F_2^n \setminus \set{0} : \mult_{\graph F \cap (\set{0,u}^\perp \times \F_2^n)}(0,F(0)+v) \text{ is even}}
\]
is an affine hyperplane (note that we are using the fact $F$ is quadratic and not only crooked), and this affine hyperplane is equal to $\F_2^n \setminus \set{0,\varepsilon_F(u)}^\perp$.
Thus, $\widehat{1_{\nearbentcomps{F'}}}(v) \equiv 0 \pmod 4$ for all $v \in \F_2^n \setminus \set{0,\varepsilon_F(u)}^\perp$ and $\widehat{1_{\nearbentcomps{F'}}}(v) \equiv 2 \pmod 4$ for all nonzero $v \in \set{0,\varepsilon_F(u)}^\perp$.
Note that $1_{\nearbentcomps{F'}}$ has even Hamming weight because $|\nearbentcomps{F'}| \equiv \widehat{1_{\nearbentcomps{F'}}}(v) \pmod 2$ for all $v \in \F_2^n$, implying $\deg_{alg}(1_{\nearbentcomps{F'}}) \leq n-1$.
Note that 
\[
|\nearbentcomps{F'} \cap \set{x \in \F_2^n : x \cdot v =1}|= \frac{ |\nearbentcomps{F'}| - \widehat{1_{\nearbentcomps{F'}}}(v)}{2},
\]
and in both cases of $|\nearbentcomps{F'}| \equiv 0 \pmod 4$ and $|\nearbentcomps{F'}| \equiv 2 \pmod 4$, we see that it is possible to find a nonzero $v \in \F_2^n$ such that $|\nearbentcomps{F'} \cap \set{x \in \F_2^n : x \cdot v =1}|$ is odd because $\widehat{1_{\nearbentcomps{F'}}}$ takes values that are congruent to $0$ and $2$ modulo $4$ on $\F_2^n \setminus \set{0}$.
Hence, $\deg_{alg}(1_{\nearbentcomps{F'}}) \geq n-1$, and we conclude the proof.
\end{proof}

\begin{remark}
    It is known that when $n \in \set{6,8}$, there exists a quadratic APN $(n,n)$-function of linearity $2^{n-1}$, the largest possible linearity of a plateaued APN function, but it is unknown whether or not there exist more quadratic APN functions of maximal linearity \cite{BeierleLeanderExtensions}. 
    One interesting characteristic of a quadratic APN function $F \colon \F_2^n \to \F_2^n$ of linearity $2^{n-1}$ is that there exists an affine hyperplane $H \subseteq \F_2^n$ such that $F|_H \colon H \to \F_2^n$ is EA equivalent to $x \mapsto i(G(x))$ for some quadratic APN function $G \colon\F_2^{n-1} \to \F_2^{n-1}$ and linear embedding $i \colon \F_2^{n-1} \to \F_2^n$ \cite{BeierleLeanderExtensions}.
    This makes \Cref{prop:nearbent-restriction-ind} straightforward for this particular case of $F'=F|_H$.
    Indeed, all component functions of $G$ are near-bent since $G$ is AB, and so $\deg_{alg}(1_{\nearbentcomps{G}})=n-1$, and it is then straightforward to see that $\deg_{alg}(1_{\nearbentcomps{F'}}) = n-1$.
\end{remark}
\medskip 
\noindent{\bf Acknowledgment:} We thank Enrico Piccione for an interesting discussion which led us to Paragraph \ref{Piccione}.

\bibliographystyle{plain}
\bibliography{bibliography}
\end{document}